\numberwithin{equation}{section}
\numberwithin{figure}{section}
\theoremstyle{plain}
\newtheorem{thm}{\protect\theoremname}[section]
\theoremstyle{plain}
\newtheorem{cor}[thm]{\protect\corollaryname}
\theoremstyle{plain}
\newtheorem{lem}[thm]{\protect\lemmaname}
\theoremstyle{plain}
\newtheorem{prop}[thm]{\protect\propositionname}
\theoremstyle{definition}
\newtheorem{defn}[thm]{\protect\definitionname}
\theoremstyle{remark}
\newtheorem{rem}[thm]{\protect\remarkname}
\providecommand{\corollaryname}{Corollary}
\providecommand{\definitionname}{Definition}
\providecommand{\lemmaname}{Lemma}
\providecommand{\propositionname}{Proposition}
\providecommand{\remarkname}{Remark}
\providecommand{\theoremname}{Theorem}
\begin{document}
\global\long\def\F{\mathrm{\mathbf{F}} }%
\global\long\def\Aut{\mathrm{Aut}}%
\global\long\def\C{\mathbf{C}}%
\global\long\def\H{\mathcal{H}}%
\global\long\def\U{\mathbf{U}}%
\global\long\def\P{\mathcal{P}}%
\global\long\def\ext{\mathrm{ext}}%
\global\long\def\hull{\mathrm{hull}}%
\global\long\def\triv{\mathrm{triv}}%
\global\long\def\Hom{\mathrm{Hom}}%

\global\long\def\trace{\mathrm{tr}}%
\global\long\def\End{\mathrm{End}}%

\global\long\def\L{\mathcal{L}}%
\global\long\def\W{\mathcal{W}}%
\global\long\def\E{\mathbb{E}}%
\global\long\def\SL{\mathrm{SL}}%
\global\long\def\R{\mathbf{R}}%
\global\long\def\Pairs{\mathrm{PowerPairs}}%
\global\long\def\Z{\mathbf{Z}}%
\global\long\def\rs{\to}%
\global\long\def\A{\mathcal{A}}%
\global\long\def\a{\mathbf{a}}%
\global\long\def\rsa{\rightsquigarrow}%
\global\long\def\D{\mathbf{D}}%
\global\long\def\b{\mathbf{b}}%
\global\long\def\df{\mathrm{def}}%
\global\long\def\eqdf{\stackrel{\df}{=}}%
\global\long\def\ZZ{\overline{Z}}%
\global\long\def\Tr{\mathrm{Tr}}%
\global\long\def\N{\mathbf{N}}%
\global\long\def\std{\mathrm{std}}%
\global\long\def\HS{\mathrm{H.S.}}%
\global\long\def\e{\mathbf{e}}%
\global\long\def\c{\mathbf{c}}%
\global\long\def\d{\mathbf{d}}%
\global\long\def\AA{\mathbf{A}}%
\global\long\def\BB{\mathbf{B}}%
\global\long\def\u{\mathbf{u}}%
\global\long\def\v{\mathbf{v}}%
\global\long\def\spec{\mathrm{spec}}%
\global\long\def\Ind{\mathrm{Ind}}%
\global\long\def\half{\frac{1}{2}}%
\global\long\def\Re{\mathrm{Re}}%
\global\long\def\Im{\mathrm{Im}}%
\global\long\def\Rect{\mathrm{Rect}}%
\global\long\def\Crit{\mathrm{Crit}}%
\global\long\def\Stab{\mathrm{Stab}}%
\global\long\def\SL{\mathrm{SL}}%
\global\long\def\TF{\mathsf{TF}}%
\global\long\def\p{\mathfrak{p}}%
\global\long\def\j{\mathbf{j}}%
\global\long\def\uB{\underline{B}}%
\global\long\def\tr{\mathrm{tr}}%
\global\long\def\rank{\mathrm{rank}}%
\global\long\def\K{\mathcal{K}}%
\global\long\def\hh{\mathbb{H}}%

\global\long\def\EE{\mathcal{E}}%
\global\long\def\PSL{\mathrm{PSL}}%

\title{Extension of Alon's and Friedman's conjectures to \linebreak{}
Schottky surfaces}
\author{Michael Magee and Frédéric Naud}
\maketitle
\begin{abstract}
Let $X=\Lambda\backslash\hh$ be a Schottky surface, that is, a conformally
compact hyperbolic surface of infinite area. Let $\delta$ denote
the Hausdorff dimension of the limit set of $\Lambda$.

We prove that for any compact subset $\K\subset\{\,s\,:\,\Re(s)>\frac{\delta}{2}\,\}$,
if one picks a random degree $n$ cover $X_{n}$ of $X$ uniformly
at random, then with probability tending to one as $n\to\infty$,
there are no resonances of $X_{n}$ in $\K$ other than those already
belonging to $X$ (and with the same multiplicity). This result is
conjectured to be the optimal one for bounded frequency resonances
and is analogous to both Alon's and Friedman's conjectures for random
graphs, which are now theorems due to Friedman and Bordenave-Collins,
respectively.
\end{abstract}
\tableofcontents{}

\section{Introduction}

Let $X$ be an infinite area surface equipped with a metric of constant
curvature -1, with a finitely generated non-abelian fundamental group
and without cusps. Such a surface is called a \emph{conformally compact
hyperbolic surface}. 

This paper addresses the question of whether typical such surfaces
have (almost) optimal spectral gaps. Here, a spectral gap refers either
to a gap in the spectrum of the Laplace-Beltrami operator or, more
ambitiously, to a region where there are no resonances of the meromorphically
continued resolvent. We give the background to this spectral theory
now; for general background on the spectral theory of infinite area
hyperbolic surfaces the reader should see \cite{Borthwick}.

One can obtain $X$ as a quotient $X=\Lambda\backslash\mathbb{H}$
where $\mathbb{H}$ is the hyperbolic upper half plane and $\Lambda$
is a discrete, finitely generated, non-abelian free subgroup of $\PSL_{2}(\R)$,
and, in fact, by a result of Button \cite{Button} there are generators
$\gamma_{1}\ldots,\gamma_{d}$ of $\Lambda$ for some $d\geq2$ which
form a Schottky system in the sense of $\S\S$\ref{subsec:Boundary-coding-of}.
The orbit of $\Lambda$ on any fixed point of $\mathbb{H}$ accumulates
on $\partial\mathbb{H}$ in a set called the \emph{limit set }of $\Lambda$;
we write $\delta=\delta(\Lambda)$ for the Hausdorff dimension of
the limit set. The assumption that $\Lambda$ is non-abelian and $X$
is infinite area implies $\delta\in(0,1)$.

Let $\Delta_{X}$ denote the Laplace-Beltrami operator on $L^{2}(X)$.
The spectrum of $\Delta_{X}$ in the range $[\frac{1}{4},\infty)$
is always continuous with no embedded eigenvalues, and the spectrum
is always discrete below $\frac{1}{4}$, by work of Lax-Phillips \cite{LP}.
If $\delta>\frac{1}{2}$ then the bottom of the spectrum occurs at
$\delta(1-\delta)$ by a result of Patterson \cite{Patterson}. Therefore
a naive definition of spectral gap in this case is $\lambda_{1}-\delta(1-\delta)$
where $\lambda_{1}$ is the minimum element of the spectrum other
than $\delta(1-\delta$), including $\delta(1-\delta)$ itself if
it occurs with multiplicity larger than one. However, if $\delta\leq\frac{1}{2}$
then the spectrum is precisely $[\frac{1}{4},\infty)$ and this notion
of spectral gap has no meaning. Moreover, even if $\delta>\frac{1}{2}$,
it does not give the strongest possible information as we explain
now.

The resolvent operator
\[
R_{X}(s)\eqdf(\Delta_{X}-s(1-s))^{-1}:C_{0}^{\infty}(X)\rightarrow C^{\infty}(X)
\]
 has meromorphic continuation from $\Re(s)>\frac{1}{2}$ to the entire
complex plane \cite{MazzeoMelrose}. The poles of this family of operators
are called \emph{resonances }of $X$. The multiplicity of a resonance
$s$ is given by $\mathrm{rank}\left(\int_{\gamma}R_{Y}(s)ds\right)$
where $\gamma$ is an anticlockwise oriented circle enclosing $s$
and no other resonance of $X$. Resonances $s$ with $\Re(s)>\frac{1}{2}$
yield eigenvalues $s(1-s)$ with the same multiplicity. We write $\mathcal{R}_{X}$
for the multiset of resonances of $X$, including multiplicities.

Now it is apparent that beyond $L^{2}$ spectral gaps, we can ask
for resonance-free regions of the complex plane. We summarize what
is known in this regard assuming that $X$ is connected (otherwise
there is no spectral gap in any sense). In the half-plane $\left\{ \Re(s)>\half\right\} $,
there are finitely many resonances that all lie on the real line,
the right-most one being at $s=\delta$. If $\delta\leq\half$, then
we know from \cite{NaudExpanding} that there exists $\epsilon(\Lambda)>0$
such that in the half plane $\left\{ \Re(s)>\delta-\epsilon\right\} $,
the only resonance is at $s=\delta$. Therefore the spectral gap is
then defined as the maximal size of this $\epsilon(\Lambda)$. It
was shown in \cite{JNS} that this gap can be arbitrarily small.

A conjecture of Jakobson and Naud \cite[Conj. 2]{JN1} predicts (among
other things) that for any $\epsilon>0$ there are infinitely many
resonances with $\Re(s)>\frac{\delta}{2}-\epsilon$ and hence one
does not expect to obtain spectral gap larger than $\frac{\delta}{2}$.
It is a pressing question as to whether `typical' surfaces have a
spectral gap close to this optimal value; this question has famous
analogs in graph theory that we will turn to shortly.

In \cite{MN1} we introduced a model of random conformally compact
hyperbolic manifolds based on random covering spaces. For any $n\in\N$
the collection of degree $n$ Riemannian covering spaces of $X$ is
a finite set and hence we can pick one of these covering spaces uniformly
at random. Note that if $X'$ covers $X$, then any resonance of $X$
is a resonance of $X'$, with at least as large multiplicity. The
main theorem of this paper is the following.
\begin{thm}
\label{thm:main-theorem}Let $X_{n}$ denote a uniformly random degree
$n$ covering space of $X$. For any compact set $\K\subset\{\,s\,:\,\Re(s)>\frac{\delta}{2}\,\}$,
with probability tending to one as $n\to\infty$
\[
\mathcal{R}_{X_{n}}\cap\K=\mathcal{R}_{X}\cap\K.
\]
\end{thm}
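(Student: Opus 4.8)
The plan is to translate the statement about resonances into one about zeros of Fredholm determinants of twisted Ruelle transfer operators, and then to combine two ingredients: a deterministic bound for the transfer operator twisted by the regular representation of the free group --- this is where the threshold $\delta/2$ enters --- and a strong-convergence input for random permutations, analogous to the Bordenave--Collins resolution of Friedman's conjecture.

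First I would invoke the Schottky coding (set up earlier in the paper): $\mathcal{R}_{X}$ is precisely the zero set, with multiplicity, of the entire function $d_{X}(s)=\det(I-\L_{s})$, where $\L_{s}$ is the Ruelle transfer operator of a Schottky system for $\Lambda$, acting on a Bergman space $\H$ of holomorphic functions on a disjoint union of disks; $\L_{s}$ is trace-class (indeed nuclear) and holomorphic in $s$. A uniformly random degree $n$ cover $X_{n}$ is encoded by a homomorphism $\phi\in\Hom(\Lambda,S_{n})$, that is, by an iid uniform $d$-tuple of permutations $\phi(\gamma_{1}),\dots,\phi(\gamma_{d})$; transitivity of $\phi$ (equivalently, connectedness of $X_{n}$) holds with probability tending to $1$, and the excluded disconnected covers cannot affect the conclusion. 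The transfer operator formalism lifts to covers: $\mathcal{R}_{X_{n}}$ is the zero set, with multiplicity, of $\det(I-\L_{s}\otimes\phi)$ on $\H\otimes\mathbf{C}^{n}$, with $\phi$ the permutation representation. Decomposing $\mathbf{C}^{n}\cong\triv\oplus(\std\circ\phi)$ equivariantly gives the factorization $\det(I-\L_{s}\otimes\phi)=d_{X}(s)\cdot\det(I-\L_{s,\std\phi})$, where $\L_{s,\std\phi}:=\L_{s}\otimes(\std\circ\phi)$ acts on $\H\otimes\mathbf{C}^{n-1}$. Hence Theorem~\ref{thm:main-theorem} follows once we show that, with probability tending to $1$, $\det(I-\L_{s,\std\phi})$ has no zero in $\K$; the matching of multiplicities is then automatic from the factorization.

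For this it suffices to prove that $\sup_{s\in\K}\|\L_{s,\std\phi}\|<1$ with probability tending to $1$, since then $I-\L_{s,\std\phi}$ is invertible on all of $\K$. The deterministic half of the argument is the corresponding bound for the left regular representation $\lambda$ of $\F_{d}$: the operator $\L_{s}\otimes\lambda$ on $\H\otimes\ell^{2}(\F_{d})$ has spectral radius strictly less than $1$ for every $s$ with $\Re(s)>\delta/2$. The $k$-th power of this operator is $\sum_{|w|=k}W_{s}(w)\otimes\lambda(g_{w})$, a sum over reduced words $w$ of length $k$ of the branch operators $W_{s}(w)$ on $\H$ (with $\|W_{s}(w)\|\asymp e^{-\Re(s)\,\ell(w)}$) tensored with the corresponding group elements; because $\F_{d}$ is free and non-amenable, the operator-valued Haagerup inequality bounds its norm by $(k+1)\big(\sum_{|w|=k}\|W_{s}(w)\|^{2}\big)^{1/2}\asymp (k+1)\,e^{P(2\Re(s))k/2}$, where $P$ is the topological pressure of the geometric potential. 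Since $P(\delta)=0$ (Bowen's formula) and $P$ is strictly decreasing, $P(2\Re(s))<0$ exactly when $\Re(s)>\delta/2$, so the spectral radius of $\L_{s}\otimes\lambda$ is $\le e^{P(2\Re(s))/2}<1$ there; by compactness of $\K$ there is $c=c(\K,\Lambda)>0$ with $\sup_{s\in\K}\|\L_{s}\otimes\lambda\|\le 1-c$. (This is the analogue of the fact that, for the $d$-regular tree, the relevant operator in the regular representation has norm $2\sqrt{d-1}$ rather than $d$; the untwisted operator $\L_{s}=\L_{s,\triv}$ does have spectral radius $\ge 1$ for $\Re(s)\in(\delta/2,\delta]$, reflecting the resonance at $s=\delta$, but the trivial representation is not weakly contained in the regular one.)

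The probabilistic half is that an iid uniform $d$-tuple of permutations, viewed through $\std$ in the unitary group $U(n-1)$, is \emph{strongly asymptotically free}, converging strongly to a free Haar-unitary $d$-tuple generating $C^{*}_{r}(\F_{d})$, by Bordenave--Collins. What is needed is the operator-coefficient form of this: writing $\L_{s,\phi}=\sum_{a}A_{a}(s)\otimes\phi(a)$, where $a$ runs over the symbolic alphabet (the generators $\gamma_{i}$ and their inverses) and $A_{a}(s)$ is trace-class on $\H$ and holomorphic in $s$, one has a linear pencil with compact operator coefficients rather than a non-commutative polynomial in the $\phi(\gamma_{i})$; truncating $\H$ to a finite-rank subspace (legitimate by nuclearity of $\L_{s}$, with error tending to $0$ uniformly on $\K$ by equicontinuity of $s\mapsto\L_{s}$) reduces matters to matrix-coefficient polynomials, and, together with an $\epsilon$-net argument over $\K$, yields $\sup_{s\in\K}\big|\,\|\L_{s,\std\phi}\|-\|\L_{s}\otimes\lambda\|\,\big|\to 0$ in probability. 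Combined with the previous paragraph, $\mathbb{P}\big(\sup_{s\in\K}\|\L_{s,\std\phi}\|>1-\tfrac{c}{2}\big)\to 0$, which completes the proof. The main obstacle is exactly this strong-convergence step in the generality required --- operator-valued and infinite-dimensional coefficients, holomorphic dependence on the spectral parameter, and uniformity over $\K$; adapting Bordenave--Collins to this setting is the technical heart of the argument. (An alternative route replacing the two halves above is a direct Friedman-style trace method: estimating $\E\big[\Tr\L_{s,\std\phi}^{k}\big]$ via expected fixed-point counts of the word maps $w\mapsto\phi(w)$ and a ``tangle-free up to length $\asymp c\log n$'' analysis, then passing to the sharp exponent $\delta/2$ through an Ihara/non-backtracking reduction; there the combinatorics of word measures and self-intersecting closed geodesics would be the crux.)
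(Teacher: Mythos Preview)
Your main proposed route---bound the spectral radius of $\L_{s}\otimes\lambda$ via the operator-valued Haagerup inequality, then invoke Bordenave--Collins strong convergence as a black box---is \emph{not} how the paper proceeds; the paper takes what you call the ``alternative route.'' It conjugates $\L_{s,\rho_{n}^{0}}$ to a non-backtracking-type operator $B(s)$, applies Bordenave's path decomposition on the $\ell$-tangle-free event, and reworks the Bordenave--Collins high-trace combinatorics to show $\|\L_{s,\rho_{n}^{0}}^{\ell}\|<1$ with high probability for $\ell\asymp\log n$. The $\delta/2$ threshold enters through the pressure estimate (Lemma~\ref{lem:Pressure-estimate}) inside the bound on sums over isomorphism classes of paths (Proposition~\ref{prop:sum-over-iso-class}), not via Haagerup. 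The paper explicitly flags the failure of the symmetry $A_{i}(s)^{*}=A_{\bar i}(s)$ as the main obstacle to quoting Bordenave--Collins verbatim, and handles it by redoing the internals of the trace method rather than by any strong-convergence black box.

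Your main route also contains a real gap. You correctly obtain from Haagerup that the \emph{spectral radius} of $\L_{s}\otimes\lambda$ is at most $e^{P(2\Re(s))/2}<1$ on $\K$, but then assert $\sup_{s\in\K}\|\L_{s}\otimes\lambda\|\le 1-c$, which does not follow: the operator is not normal, and your own Haagerup bound $(k+1)\,e^{kP(2\Re(s))/2}$ on the $k$-th power is not below $1$ for $k=1$. Since strong convergence transfers operator norms, not spectral radii, the conclusion $\|\L_{s,\std\phi}\|<1$ with high probability is unfounded as written. The repair is to choose a large \emph{fixed} $k_{0}=k_{0}(\K)$ with $\sup_{s\in\K}\|(\L_{s}\otimes\lambda)^{k_{0}}\|<1$, truncate $\H$ to a fixed finite dimension (legitimate since each $A_{a}(s)$ is trace class and continuous in $s$, so the truncation error is uniform on $\K$ and in $\phi$), and then apply strong convergence with matrix coefficients---available via exactness of $C^{*}_{r}(\F_{d})$, without any symmetry hypothesis---to the resulting degree-$k_{0}$ polynomial at each point of a finite net in $\K$. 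With this fix your route looks viable and is genuinely softer than the paper's: it black-boxes Bordenave--Collins and replaces the bespoke path combinatorics by functional-analytic reductions. The paper's direct approach, by contrast, gives explicit quantitative control (the bounds of Propositions~\ref{prop:B(ell)-norm-bound} and~\ref{prop:Rkl-norm-bound}) and avoids relying on the exactness/strong-convergence machinery at the cost of reworking the Friedman--Bordenave--Collins combinatorics in the non-self-adjoint, infinite-dimensional setting.
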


This theorem says that for any $\epsilon>0$, the resonance set of
random $X_{n}$ in the region $\{\,s\,:\,\Re(s)>\frac{\delta}{2}+\epsilon\}$
is almost surely optimal \emph{provided we restrict to bounded frequency
(imaginary part). }A weaker version of Theorem \ref{thm:main-theorem}
with $\frac{\delta}{2}$ replaced by $\frac{3\delta}{4}$ was proved
in \cite[Thm. 1.1]{MN1}

Theorem \ref{thm:main-theorem} together with the main result of \cite{BMM}
implies the following corollary on $L^{2}$ eigenvalues.
\begin{cor}
Assume that $\delta>\half$, and that the base surface\footnote{In this case $X$ is either a pair of pants with three funnels or
a torus with one funnel.} $X$ has Euler characteristic $-1$. Let $X_{n}$ denote a uniformly
random degree $n$ covering space of $X$. With probability tending
to one as $n\to\infty$ the only eigenvalue of $\Delta_{X_{n}}$ is
$\delta(1-\delta)$.
\end{cor}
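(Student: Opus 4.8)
The plan is to deduce the corollary from Theorem~\ref{thm:main-theorem}, applied to a single well-chosen compact set, together with the resonance structure of the base surface supplied by \cite{BMM}.

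First I would recall the dictionary between $L^2$ eigenvalues and resonances. For any conformally compact hyperbolic surface $Y$ the spectrum of $\Delta_Y$ on $[\frac{1}{4},\infty)$ is purely continuous by \cite{LP}, so every eigenvalue lies in $(0,\frac{1}{4})$ and is therefore of the form $s(1-s)$ for a unique $s\in(\half,1)$; by the discussion in the introduction the resonances of $Y$ with $\Re(s)>\half$ are precisely these numbers, with multiplicity equal to that of the corresponding eigenvalue, and they all lie in $(\half,\delta(Y)]$ with the rightmost one at $\delta(Y)$. Applying this to $Y=X_n$, and using that passing to a finite cover leaves the limit set unchanged so that $\delta(X_n)=\delta$, the eigenvalues of $\Delta_{X_n}$ are exactly $\{\,s(1-s)\,:\,s\in\mathcal R_{X_n}\cap(\half,\delta]\,\}$, counted with multiplicity.

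Next I would observe that, since $\delta\in(\half,1)$, we have $\half>\frac{\delta}{2}$, so $[\half,\delta]$ is a compact subset of $\{\,s\,:\,\Re(s)>\frac{\delta}{2}\,\}$; Theorem~\ref{thm:main-theorem} then gives that with probability tending to one $\mathcal R_{X_n}\cap[\half,\delta]=\mathcal R_X\cap[\half,\delta]$. To identify the right-hand side I would invoke the main result of \cite{BMM}: because $X$ has Euler characteristic $-1$ and $\delta>\half$, the only eigenvalue of $\Delta_X$ is $\delta(1-\delta)$, so by the dictionary above (and simplicity of the ground state) $\mathcal R_X\cap(\half,\delta]=\{\delta\}$ with multiplicity one, whence $\mathcal R_X\cap[\half,\delta]\subseteq\{\half,\delta\}$. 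A possible resonance at $s=\half$ is harmless, as it would correspond to the value $\frac{1}{4}$, which is never an $L^2$ eigenvalue by \cite{LP}. Hence on the high-probability event of Theorem~\ref{thm:main-theorem} we get $\mathcal R_{X_n}\cap(\half,\delta]=\{\delta\}$ with multiplicity one, i.e.\ $\delta(1-\delta)$ is the only eigenvalue of $\Delta_{X_n}$; it is indeed an eigenvalue, since the $\delta(1-\delta)$-eigenfunction of $\Delta_X$ furnished by \cite{Patterson} pulls back to a square-integrable eigenfunction on the finite cover $X_n$.

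I do not anticipate a genuine obstacle: the mathematical content is carried entirely by Theorem~\ref{thm:main-theorem} and by \cite{BMM}. The points requiring mild care are the bookkeeping in the eigenvalue-resonance correspondence (in particular the harmless role of the critical line $\Re(s)=\half$, which is what allows the compact set $[\half,\delta]$ to touch it), the equality $\delta(X_n)=\delta$, and the observation that the event in Theorem~\ref{thm:main-theorem} already forces $X_n$ to be connected --- otherwise $\delta$ would appear in $\mathcal R_{X_n}$ with multiplicity equal to the number of connected components, contradicting $\mathcal R_{X_n}\cap[\half,\delta]=\mathcal R_X\cap[\half,\delta]$ --- so that the structural facts about resonances of conformally compact hyperbolic surfaces may be applied to $X_n$ exactly as stated.
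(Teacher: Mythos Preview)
Your proposal is correct and is exactly the approach the paper indicates: the paper does not spell out a proof of the corollary but simply states that it follows from Theorem~\ref{thm:main-theorem} together with the main result of \cite{BMM}, and your argument is a faithful and careful unpacking of precisely that implication. The additional details you supply (the eigenvalue--resonance dictionary, the choice $\mathcal K=[\half,\delta]$, the invariance of $\delta$ under finite covers, and the remark on connectedness) are all standard and correctly handled.
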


We now explain the analogy with random graphs; this is also discussed
in detail in \cite[Introduction]{MN1}. A celebrated conjecture of
Alon \cite{Alon} predicted that for any $\epsilon>0$, a uniformly
random $d$-regular graph on $n$ vertices, with probability tending
to one as $n\to\infty$, has no eigenvalues of its adjacency operator
larger than $2\sqrt{d-1}+\epsilon$, other than $d$. The relevance
of the value $2\sqrt{d-1}$ is both that it is the spectral radius
of the adjacency operator on the universal cover of a $d$-regular
graph (a $d$-regular tree), and also, a result of Alon-Boppana \cite{Nilli}
states that any sequence of $d$-regular graphs on $n$ vertices has
second largest eigenvalue of their adjacency operators at least $2\sqrt{d-1}-o(1)$
as $n\to\infty$. Hence the value $2\sqrt{d-1}$ is analogous to $\frac{\delta}{2}$
here. It is interesting however that even though the Alon-Boppana
bound is not very hard to prove, the analog of this fact in the current
setting is still only a conjecture. Alon's conjecture was proved by
Friedman in \cite{Friedman} (see also \cite{bordenave2015new} for
a new proof and \cite{PUDER} for a proof of only slightly weaker
result).

Friedman conjectured in \cite{FriedmanRelative} that a variant of
Alon's conjecture should hold for random degree $n$ covering spaces\footnote{In graph-theoretic literature, these are called $n$-lifts.}
of a fixed finite base graph provided
\begin{itemize}
\item One replaces $2\sqrt{d-1}$ by the spectral radius of the adjacency
operator on the universal cover of the graph, and 
\item One allow eigenvalues of the adjacency operator that already belonged
to the base graph (as one must).
\end{itemize}
Friedman's conjecture was proved in a breakthrough work of Bordenave
and Collins \cite{BordenaveCollins}. In fact Bordenave-Collins proved
a vast generalization of Friedman's conjecture where one twists a
random Hecke operator, formed from random permutations, by arbitrary
fixed finite dimensional matrices, assuming the matrices satisfy a
symmetry condition that forces the resulting operator to be self-adjoint.
The work of Bordenave-Collins is a vital ingredient of the current
work.

The first result on spectral gap of random hyperbolic surfaces is
due to Brooks and Makover \cite{BrooksMakover} who prove that for
a combinatorial model of random closed hyperbolic surface, depending
on a parameter $n$ that influences the genus (non-deterministically),
that there exists a constant $C>0$ such that with probability tending
to one as $n\to\infty$, the first non-zero eigenvalue $(\lambda_{1})$
of the Laplacian satisfies $\lambda_{1}\geq C$. Mirzakhani proved
in \cite{MirzakhaniRandom} that for Weil-Petersson random closed
hyperbolic surfaces of genus $g$, with probability tending to one
as $g\to\infty$ one has $\lambda_{1}\geq0.0024$. In \cite{MN1}
the authors of the current paper proved that Theorem \ref{thm:main-theorem}
holds with $\frac{\delta}{2}$ replaced by $\frac{3\delta}{4}$.

Returning to closed surfaces, by building on \cite{MPasympcover},
the authors and Puder proved in \cite{MageeNaudPuder} that for a
uniformly random degree $n$ cover of a fixed closed hyperbolic surface,
there are for any $\epsilon>0$, with probability tending to one as
$n\to\infty$, no new eigenvalues of the covering space below $\frac{3}{16}-\epsilon$.
This result was adapted to Weil-Petersson random surfaces independently
by Wu and Xue \cite{wu2021random} and Lipnowski and Wright \cite{lipnowski2021optimal};
here the corresponding statement is that there are no eigenvalues
between $0$ and $\frac{3}{16}-\epsilon$. These `$\frac{3}{16}$'
results are, when it comes \emph{solely} to $L^{2}$ eigenvalues,
at the strength of the result of \cite{MN1} giving resonance-free
regions in terms of $\frac{3\delta}{4}$; for compact surfaces $\delta=1$
and the eigenvalue is written $\lambda=s(1-s)$ ($\frac{3}{4}\left(1-\frac{3}{4}\right)=\frac{3}{16})$.
On the other hand, closed surfaces involve additional difficulties
due to their non-free fundamental groups.

There are other related works on Weil-Petersson random surfaces that
do not imply spectral gaps, but instead offer some spectral delocalization
results \cite{Monk,GLST}.

Uniform spectral gap for (deterministic) covering spaces of infinite
area hyperbolic surfaces has also been of interest in number theoretic
settings; see \cite{BGS2,OW,MOW,Gamburd1} for a selection of results;
the single quantitative result here is by Gamburd \cite{Gamburd1}.
Much of the motivation of these spectral gap results came from the
`thin groups' research program; see Sarnak's article \cite{Sarnak}
for an overview. 

Another closely related concept is that of \emph{essential spectral
gap}; referring to a half-plane where only finitely many resonances
appear. Two important results here are due to Bourgain and Dyatlov.
Let $X$ be conformally compact as above. The first result, proved
in \cite{BDFourier}, says that there is $\epsilon>0$, \emph{depending
only on $\delta$, }such that there are only finitely many resonances
in $\{\,s\,:\,\Re(s)>\delta-\epsilon\,\}$. This result is relevant
if $\delta\leq1/2$. On the other hand, it is proved in \cite{BDgap}
that there are only finitely many resonances in $\{\,s\,:\,\Re(s)>\frac{1}{2}-\eta\,\}$
for some $\eta=\eta(\Lambda)>0$, this result being relevant if $\delta>\half$.
A conjecture of Jakobson-Naud \cite{JN1} says that the optimal essential
spectral gap corresponds to finitely many resonances in $\left\{ \Re(s)>\frac{\delta}{2}+\varepsilon\right\} $
for any $\varepsilon>0$. We do not know yet if our probabilistic
techniques can be used to address high frequency problems (i.e. resonances
with large imaginary parts) and have not attempted to do so in the
present paper, however the present paper says that in some sense this
conjecture holds in the bounded frequency, large cover regime. For
a broader perspective on resonances of hyperbolic surfaces than we
are able to offer here, the reader can see Zworski's survey article
\cite{Zworski_survey}.

\subsection*{Acknowledgments}

We thank Charles Bordenave, Benoît Collins, and Doron Puder for helpful
conversations related to this work.

This project has received funding from the European Research Council
(ERC) under the European Union\textquoteright s Horizon 2020 research
and innovation programme (grant agreement No 949143).

\global\long\def\i{\mathbf{i}}%

\section{Preliminaries}

\subsection{Boundary coding of Schottky groups and Bowen's formula\label{subsec:Boundary-coding-of}}

In this section, we set some notations (which are essentially similar
to \cite{MN1} and \cite{BDFourier} with some slight changes).

Let $d\geq2$ and $\mathcal{I}=\{1,\ldots,2d\}$. If $i\in\mathcal{I}$,
then we write $\bar{i}\eqdf i+d\bmod2d$ so $i\in\mathcal{I}$. For
each $i\in\mathcal{I}$we are given an \emph{open} disc $D_{i}$ in
$\C$ with center in $\R$. The closures of the discs $D_{i}$ for
$i\in\mathcal{I}$ are assumed to be disjoint from one another. We
let $I_{i}\eqdf D_{i}\cap\R$, an open interval. We write $\D\eqdf\cup_{i\in\mathcal{I}}D_{i}$
for the union of the discs.

We consider the usual action of $\SL_{2}(\R)$ by Möbius transformations
on the extended complex plane $\hat{\C}=\C\cup\{\infty\}$. We are
given for each $i\in\mathcal{I}$ a matrix $\gamma_{i}\in\SL_{2}(\R)$
with the mapping property
\[
\gamma_{i}\left(\hat{\C}\backslash D_{\bar{i}}\right)=\overline{D_{i}},\quad\gamma_{\bar{i}}=\gamma_{i}^{-1}.
\]
We recall that we denote $\Lambda\eqdf\langle\,\gamma_{i}\,:\,i\in\mathcal{I\,}\rangle$
for the group generated by the $\gamma_{i}$. Since the discs $D_{i}$
are disjoint, Klein's Ping-Pong Lemma shows that $\Lambda$ is a \textit{free
subgroup} of $\SL_{2}(\R)$. The converse is actually true in dimension
$2$: every conformally compact hyperbolic surface $X$ can be uniformized
by a Schottky group $\Lambda$ so that $X=\Lambda\backslash\hh$,
see \cite{Button}. The elements of $\Lambda$ can be encoded by words
in the alphabet $\mathcal{I}$ as follows. A \emph{word }is a finite
sequence 
\[
\i=(i_{1},\ldots,i_{n}),\quad n\in\N\cup\{0\}
\]
such that $i_{j}\neq\overline{i_{j+1}}$ for $j=1,\ldots,n-1$. We
say that $n$ is the\emph{ length} of $\i$ and denote this by $|\i|=n$.
We write $\W$ for the collection of all words, $\W_{N}$ for the
words of length $N$, and $\W_{\geq N}$ for the words of length $\geq N$.
We write $\emptyset$ for the empty word and write $\W^{\circ}=\W-\{\emptyset\}$.
For $\i=(i_{1},\ldots,i_{n}),\ \j=(j_{1},\ldots j_{m})\in\W$ we write 
\begin{itemize}
\item ${\bf \i'}\eqdf(i_{1},\ldots,i_{n-1})$ if $\i=(i_{1},\ldots,i_{n})$
and $n\geq1$. 
\item $\hat{\i}\eqdf(i_{2},\ldots,i_{n})$ if $\i=(i_{1},\ldots,i_{n})$
and $n\geq1$.
\item $\i\to\j$ if either of $\i$ or $\j$ is empty, or else $i_{n}\neq\overline{j_{1}}$,
in which case $(i_{1},\ldots,i_{n},j_{1},\ldots,j_{m})$ is in $\W^{\circ}$
and we write $\i\j$ for this concatenation. 
\end{itemize}
If $\i=(i_{1},\ldots,i_{n})\in\W$ then we associate to $\i$ the
group element $\gamma_{\i}\eqdf\gamma_{i_{1}}\ldots\gamma_{i_{n}}$;
here $\gamma_{\emptyset}=\mathrm{id}.$ The map $\i\in\W\mapsto\gamma_{\i}\in\Gamma$
is a one-to-one encoding of $\Gamma$. We write $\overline{\i}\eqdf(\overline{i_{n}},\ldots,\overline{i_{1}})$
and call this the \emph{mirror} \textit{word} of $\i$. Note that
$\gamma_{\overline{\i}}=\gamma_{\i}^{-1}$. If $\i=(\i_{1},\ldots,\i_{n})\in\W^{\circ}$
we set 
\[
D_{\i}=\gamma_{\i'}(D_{i_{n}}),\quad I_{\i}=\gamma_{\i'}(I_{i_{n}})
\]
and write $|I_{\i}|$ for the length of the open interval $I_{\i}$.
We view this set up as fixed henceforth, so all constants will depend
on $\Lambda$.

The \emph{Bowen-Series map} $T:\D\to\hat{\C}$ is given by 
\[
T\lvert_{D_{i}}=\gamma_{i}^{-1}=\gamma_{\bar{i}}.
\]
The Bowen-Series map is eventually expanding \cite[Prop. 15.5]{Borthwick};
this will be made explicit below so we do not give the general definition
now. The limit set $K=K(\Lambda)$ of $\Lambda$, defined in the Introduction,
coincides with the non-wandering set of $T$: 
\[
K(\Lambda)\eqdf\bigcap_{n=1}^{\infty}T^{-n}(\D).
\]
The limit set $K$ is a compact $T$-invariant subset of $\R$. Given
a Hölder continuous map $\varphi:K\rightarrow\R$, the \emph{topological
pressure} $P(\varphi)$ can be defined through the variational formula:
\[
P(\varphi)\eqdf\sup_{\mu}\left(h_{\mu}(T)+\int_{\Lambda}\varphi d\mu\right),
\]
where the supremum is taken over all $T$-invariant probability measures
on $K(\Lambda)$, and $h_{\mu}(T)$ stands for the measure-theoretic
entropy. A famous result of Bowen \cite{Bowen} says that the map
$\R\to\R$,
\[
r\mapsto P(-r\log\vert T'\vert)
\]
is convex\footnote{Convexity follows obviously from the variational formula above.},
strictly decreasing and vanishes exactly at $r=\delta\eqdf\delta(\Lambda)$,
the Hausdorff dimension of the limit set $K(\Lambda)$. In addition,
it is not difficult to see from the variational formula that $P(-r\log\vert T'\vert)$
tends to $-\infty$ as $r\rightarrow+\infty$. For simplicity, we
will use the notation $P(r)$ in place of $P(-r\log\vert T'\vert)$.
The pressure functional plays an important role in the sequel.

\subsection{Random covering spaces, permutations, and representations\label{subsec:Random-covering-spaces,}}

Let $S_{n}$ denote the group of permutations of $[n]\eqdf\{1,\ldots,n\}$.
Our random covering spaces of $X$ are parameterized by 
\[
\phi\in\Hom(\Lambda,S_{n}).
\]
Of course, choosing $\phi$ is the same as choosing 
\[
\sigma=(\sigma_{1},\ldots,\sigma_{d})\in S_{n}^{d}
\]
where $\sigma_{i}\eqdf\phi(\gamma_{i})$. We view $\sigma$ and $\phi$
as coupled in this way in the rest of the paper.

Given $\phi\in\Hom(\Lambda,S_{n})$, we construct a covering space
of $X$ as follows. Let $\Lambda$ act on $\hh\times[n]$ by 
\[
\gamma(z,i)=(\gamma.z,\phi(\gamma)[i])
\]
 where the action on the first factor is by Möbius transformation.
The quotient
\[
X_{\phi}\eqdf\Lambda\backslash_{\phi}\left(\hh\times[n]\right)
\]
 is a degree $n$ cover of $X$. Choosing $\phi$ uniformly at random
in $\Hom(\Lambda,S_{n})$, the resulting $X_{\phi}$ is a uniformly
random degree $n$ Riemannian cover of $X$. 

Let $V_{n}\eqdf\ell^{2}([n])$. Given $\phi\in\Hom(\Lambda,S_{n})$
as above, we let $\rho_{n}:\Lambda\to\End(V_{n})$ be the representation
obtained by composing $\phi$ with the standard permutation representation
\[
S_{n}\to\End(V_{n}).
\]
We let $\rho_{n}^{0}$ denote the restriction of $\rho_{n}$ to the
space $V_{n}^{0}$ of functions on $[n]$ that are orthogonal to constants.
The parameters $\phi,\rho_{n}$, and $\rho_{n}^{0}$ (as well as $\sigma$)
are now coupled for the rest of the paper.

\subsection{Transfer operator and function spaces\label{subsec:Transfer-operator-and}}

Let $V$ be a finite dimensional complex Hilbert space. We consider
the Bergman space $\H(\D;V)$ that is the space of $V$-valued holomorphic
functions on $\D$ with finite norm with respect to the given inner
product 
\[
\langle f,g\rangle\stackrel{\df}{=}\int_{\D}\langle f(x),g(x)\rangle_{V}dm(x).
\]
Here $dm$ is Lebesgue measure on $\D$. This splits as an orthogonal
direct sum 
\[
\H(\D;V)=\bigoplus_{i\in\mathcal{I}}\H(D_{i};V).
\]
If $\{\e_{k}\}_{k=1}^{\infty}$ is any orthonormal basis of $\H(D_{i};\C)$,
and $x_{1},x_{2}\in D_{i}$, then the sum 
\[
\sum_{k=1}^{\infty}\e_{k}(x_{1})\overline{\e_{k}(x_{2})}\eqdf B_{D_{i}}(x_{1},x_{2})
\]
converges uniformly on every compact subset of $D_{i}$ and is called
the \emph{Bergman kernel of $D_{i}$. }It is given by the explicit
formula (cf. \cite[pg. 378]{Borthwick}) 
\begin{equation}
B_{D_{i}}(x_{1},x_{2})=\frac{r_{i}^{2}}{\pi\left[r_{i}^{2}-(\overline{x}_{2}-c_{i})(x_{1}-c_{i})\right]^{2}}\label{eq:explicit-bergman}
\end{equation}
where $r_{i},c_{i}$ are the radius and center of $D_{i}$. We let
\[
B_{\D}(z,w)\eqdf\sum_{i\in\mathcal{I}}\mathbf{1}\{z,w\in D_{i}\}B_{D_{i}}(z,w).
\]
 An important fact that will be used several times in this paper is
the reproducing property of the Bergman kernel: if $f\in\H(\D,V)$,
then we have for all $z\in\D$, 
\[
f(z)=\int_{\D}B_{\D}(z,w)f(w)dm(w),
\]
in particular for all compact subsets $K\subset\D$, the above explicit
formulas for the kernels show that there exists $C_{K}>0$ such that
\[
\sup_{z\in K}\Vert f(z)\Vert_{V}\leq C_{K}\Vert f\Vert_{\H(\D,V)}.
\]
Throughout the sequel, $\rho:\Lambda\to\U(V)$ will denote a finite
dimensional unitary representation of the Schottky group $\Lambda$.
The transfer operator acting on $\H(\D,V)$ is defined as follows:
\[
\L_{s,\rho}[f](x)f\eqdf\sum_{\substack{i\in\mathcal{I}\\
i\to j
}
}\gamma'_{i}(x)^{s}\rho(\gamma_{i}^{-1})f(\gamma_{i}(x))\quad x\in D_{j},j\in\mathcal{I}.
\]
Here $(\gamma_{i}'(x))^{s}$ is understood as $(\gamma_{i}'(x))^{s}:=e^{-s\tau(\gamma_{i}(x))}$,
where $\tau(z)$ is the analytic continuation to $\cup_{\vert\i\vert=2}D_{\i}$
of $\tau(x)=\log\vert T'(x)\vert$, and where $T$ is, as defined
above, the Bowen-Series map. In addition, we can assume that $\tau$
is also continuous on $\overline{\cup_{\vert\i\vert=2}D_{\i}}$ .

The following result appears in \cite[Lemma 15.7]{Borthwick} in the
case $V=\C$, and easily extends to the case of finite dimensional
$V$.
\begin{lem}
The operator $\L_{s,\rho}$ is a trace class operator on the Hilbert
space $\H(\D,V)$.
\end{lem}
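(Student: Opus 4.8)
The plan is to exploit the orthogonal splitting $\H(\D,V)=\bigoplus_{j\in\mathcal{I}}\H(D_j,V)$ and write $\L_{s,\rho}$ as a finite ``matrix'' of block operators: the $(j,i)$ block is the map $\L_{i\to j}\colon\H(D_i,V)\to\H(D_j,V)$, $\L_{i\to j}(h)(x)\eqdf\gamma_i'(x)^s\rho(\gamma_i^{-1})\,h(\gamma_i(x))$ for $x\in D_j$, when $i\to j$, and $0$ otherwise; this is legitimate since for $x\in D_j$ one has $\gamma_i(x)\in\gamma_i(D_j)\subset D_i$, so the summand indexed by $i$ only sees the $D_i$-component of $f$. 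Because the trace-class operators form a two-sided ideal in the bounded operators and are closed under finite sums, it suffices to prove that each block $\L_{i\to j}$ (for $i\to j$) is trace class. (Equivalently, one could quote the scalar case \cite[Lemma 15.7]{Borthwick} and tensor with the finite-dimensional $V$, absorbing the unitary twist $\rho(\gamma_i^{-1})$ into the bounded part; the argument below does both at once.)

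The geometric input is the following: for an admissible pair $i\to j$ we have $j\neq\bar i$, so disjointness of the closed discs gives $\overline{D_j}\subset\hat{\C}\setminus\overline{D_{\bar i}}$; since $\gamma_i$ is a homeomorphism of $\hat{\C}$ carrying $\hat{\C}\setminus D_{\bar i}$ onto $\overline{D_i}$ (interior to interior, frontier to frontier), it carries $\hat{\C}\setminus\overline{D_{\bar i}}$ onto the \emph{open} disc $D_i$. Hence $\gamma_i(\overline{D_j})$ is a compact subset of $D_i$, and since the single pole of $\gamma_{\bar i}=\gamma_i^{-1}$ sits at $\gamma_i(\infty)\notin\gamma_i(\overline{D_j})$, the map $\gamma_i^{-1}$ is holomorphic on a neighbourhood of $\gamma_i(\overline{D_j})$. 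Fix once and for all a disc $U_{i\to j}$ with $\gamma_i(\overline{D_j})\subset U_{i\to j}$ and $\overline{U_{i\to j}}\subset D_i$.

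Now factor $\L_{i\to j}$ as: (1) the restriction map $\H(D_i,V)\to\H(U_{i\to j},V)$; (2) the composition operator $h\mapsto h\circ\gamma_i$ from $\H(U_{i\to j},V)$ to $\H(D_j,V)$; (3) multiplication by the symbol $x\mapsto\gamma_i'(x)^s\rho(\gamma_i^{-1})$ on $\H(D_j,V)$. Operator (2) is bounded by the change-of-variables bound
\[
\|h\circ\gamma_i\|_{\H(D_j,V)}^{2}=\int_{\gamma_i(D_j)}\|h(y)\|_V^{2}\,|(\gamma_i^{-1})'(y)|^{2}\,dm(y)\le\Big(\sup_{\gamma_i(\overline{D_j})}|(\gamma_i^{-1})'|^{2}\Big)\|h\|_{\H(U_{i\to j},V)}^{2},
\]
the supremum being finite by the previous paragraph and using $\gamma_i(D_j)\subset U_{i\to j}$. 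Operator (3) is bounded because $\gamma_i'(x)^s=e^{-s\tau(\gamma_i(x))}$ is a bounded holomorphic scalar on $D_j$ (as $\tau$ is continuous on $\overline{\cup_{|\i|=2}D_{\i}}$ and $\gamma_i(D_j)=D_{(i,j)}$ lies in that set) and $\rho(\gamma_i^{-1})$ is a fixed unitary matrix.

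The one genuinely quantitative step is that the restriction map (1), from $\H(D_i,V)$ to $\H(U_{i\to j},V)$ with $\overline{U_{i\to j}}\subset D_i$, is trace class. Writing $\H(\Omega,V)\cong\H(\Omega,\C)\otimes V$ with $\dim V<\infty$ reduces this to $V=\C$. Say $U_{i\to j}$ has centre $a$ and radius $R$; by compactness pick $R^{+}>R$ with $\overline{D(a,R^{+})}\subset D_i$, and factor (1) as the (contractive, hence bounded) restriction $\H(D_i)\to\H(D(a,R^{+}))$ followed by the restriction $\H(D(a,R^{+}))\to\H(D(a,R))$ between concentric discs. In the orthonormal bases of normalised monomials $\{c_{n}(z-a)^{n}\}_{n\ge0}$ of these two Bergman spaces, this last map is diagonal, sending the $n$-th basis vector of the larger space to $(R/R^{+})^{\,n+1}$ times the $n$-th basis vector of the smaller one (since $\|(z-a)^{n}\|_{\H(D(a,t))}^{2}=\pi t^{2n+2}/(n+1)$); its singular values are therefore $(R/R^{+})^{\,n+1}$, which are summable, so it is trace class. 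Composing, (1) is trace class, hence each $\L_{i\to j}$ is bounded$\,\circ\,$bounded$\,\circ\,$trace-class and thus trace class, and $\L_{s,\rho}=\sum_{i\to j}\L_{i\to j}$ is trace class; for $V=\C$ this recovers \cite[Lemma 15.7]{Borthwick}. The only point I expect to require real care is the bookkeeping that $\gamma_i(\overline{D_j})$ is \emph{compactly} contained in the open disc $D_i$ (and stays away from the pole of $\gamma_i^{-1}$); once that is in place the rest is soft ideal properties plus the elementary concentric-disc singular-value computation.
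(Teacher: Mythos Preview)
Your proof is correct and is precisely the standard argument underlying \cite[Lemma 15.7]{Borthwick}: the paper itself gives no proof beyond citing that reference for $V=\C$ and remarking that the extension to finite-dimensional $V$ is immediate. Your block decomposition, the factorisation through a compactly contained intermediate disc, and the concentric-disc singular-value computation are exactly the ingredients of that standard proof, so there is nothing to add.
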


Our interest in considering these operators stems from setting $\rho=\rho_{n}^{0}$
from $\S\S$\ref{subsec:Random-covering-spaces,}. The following fact
follows from \cite[Thm 2.2. and Prop. 4.4(2)]{MN1}.
\begin{prop}
\label{prop:new-resonances-and-evalue-1}If $s$ is a resonance of
$X_{\phi}$ that appears with greater multiplicity than in $X$, for
example, by not being a resonance of $X$, then 
\[
\det(1-\L_{s,\rho_{n}^{0}})=0
\]
and hence 1 is an eigenvalue of $\L_{s,\rho_{n}^{0}}$. (The determinant
above is a Fredholm determinant.)
\end{prop}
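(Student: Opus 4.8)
The plan is to realize the excess resonances of $X_{\phi}$ over $X$ as the zeros of a \emph{relative} Selberg zeta function, and to identify that relative zeta function with $\det(1-\L_{s,\rho_{n}^{0}})$.

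First I would recall the twisted Selberg/Ruelle zeta function attached to a finite dimensional unitary representation $\rho\colon\Lambda\to\U(V)$,
\[
Z_{X,\rho}(s)=\prod_{\gamma}\prod_{k=0}^{\infty}\det\!\left(1-\rho(\gamma)\,e^{-(s+k)\ell(\gamma)}\right),
\]
with $\gamma$ running over primitive conjugacy classes of $\Lambda$ and $\ell(\gamma)$ the corresponding geodesic length. By the Ruelle--Fried mechanism adapted to the Bergman spaces $\H(\D,V)$ --- the content of \cite[Thm 2.2]{MN1}, a twisted analogue of \cite[Ch. 15]{Borthwick} --- the Fredholm determinant of the transfer operator computes this zeta function, $\det(1-\L_{s,\rho})=Z_{X,\rho}(s)$ as entire functions. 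Taking $\rho=\rho_{n}^{0}$ handles one side. On the other side, the closed geodesics of $X_{\phi}$ are the lifts of those of $X$, with the sheet permutation of a geodesic $\gamma$ recorded by $\rho_{n}(\phi(\gamma))$; the standard covering relation for zeta functions gives $Z_{X_{\phi}}(s)=Z_{X,\rho_{n}}(s)$, and since $\rho_{n}=\triv\oplus\rho_{n}^{0}$ and twisted zeta functions are multiplicative over direct sums, $Z_{X_{\phi}}(s)=Z_{X}(s)\,\det(1-\L_{s,\rho_{n}^{0}})$.

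It then remains to invoke the dictionary between resonances and zeros: for a Schottky surface $Y$ the multiplicity of $s$ as a resonance equals the order of vanishing of $Z_{Y}$ at $s$, away from the topological zeros at non-positive integers (Guillop\'e--Zworski; see \cite{Borthwick}). Comparing orders of vanishing in the factorization, if $s$ is a resonance of $X_{\phi}$ of strictly larger multiplicity than in $X$ (for instance a new resonance) then $\mathrm{ord}_{s}\det(1-\L_{s,\rho_{n}^{0}})\geq1$, so $\det(1-\L_{s,\rho_{n}^{0}})=0$ and hence $1$ lies in the spectrum of the trace-class operator $\L_{s,\rho_{n}^{0}}$. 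The step I expect to be delicate is the multiplicity bookkeeping at the topological zeros: passing to a degree $n$ cover scales the Euler characteristic by $n$ and $Z_{X,\rho_{n}^{0}}$ carries topological zeros of its own, so one must check these do not interfere with detecting the excess resonance multiplicity --- this is precisely where \cite[Prop. 4.4(2)]{MN1} enters. In the regime $\Re(s)>\frac{\delta}{2}>0$ relevant to Theorem~\ref{thm:main-theorem} this subtlety is vacuous.
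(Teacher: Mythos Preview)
Your proposal is correct and is precisely the approach underlying the cited results: the paper does not give an independent argument but defers to \cite[Thm.~2.2 and Prop.~4.4(2)]{MN1}, and what you have written is an accurate unpacking of those two ingredients (the Fredholm determinant/twisted zeta identity, the factorization $Z_{X_{\phi}}=Z_{X}\cdot\det(1-\L_{s,\rho_{n}^{0}})$ via $\rho_{n}=\triv\oplus\rho_{n}^{0}$, and the Patterson--Perry divisor formula relating zeros of $Z$ to resonances). Your caveat about topological zeros is well placed and, as you note, is exactly what \cite[Prop.~4.4(2)]{MN1} sorts out; your observation that it is vacuous for $\Re(s)>0$ is also correct and is all that is needed for the application to Theorem~\ref{thm:main-theorem}.
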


\subsection{Deterministic a priori bounds\label{subsec:Deterministic-a-priori-bounds}}

We will now introduce a useful notation from \cite{MN1}. In the rest
of the paper, for any $\i\in\W^{\circ}$, we define 
\[
\Upsilon_{\i}\eqdf|I_{\i}|,
\]
that is the length of the interval $I_{\i}$ associated to the word
$\i$. For all $\i\in\W_{\geq2}$, we have obviously
\begin{equation}
\Upsilon_{\i}\leq\Upsilon_{\i'}\label{eq:trivial-child-ineq}
\end{equation}
since $I_{i}\subset I_{i'}$. Therefore there exists trivially $c=c(\Lambda)>0$
such that for any $\i\in\W^{o}$,
\begin{equation}
0<\Upsilon_{\i}\leq c.\label{eq:upsilon-bounded}
\end{equation}

\begin{lem}
\label{lem:coarse-homomorphism}There exists a constant $K=K(\Lambda)>1$
such that the following bounds hold.
\begin{description}
\item [{(Rough~multiplicativity)}] For all $\i,\j\in\W^{\circ}$ with
$\i\rightarrow\j$, we have
\begin{equation}
K^{-1}\Upsilon_{\i}\Upsilon_{\j}\leq\Upsilon_{\i\j}\leq K\Upsilon_{\i}\Upsilon_{\j}.\label{eq:upsilon-mult-eq-1}
\end{equation}
\item [{(Mirror~estimate)}] For all $\j\in\W^{\circ}$, we have
\begin{equation}
K^{-1}\Upsilon_{\overline{\j}}\leq\Upsilon_{\j}\leq K\Upsilon_{\overline{\j}}.\label{eq:upsilon-mirror-eq}
\end{equation}
\item [{(Derivatives)}] For all $x\in D_{i_{\vert\i\vert}}$, we have
\begin{equation}
K^{-1}\Upsilon_{\i}\leq|\gamma'_{\i'}(x)|\leq K\Upsilon_{\i}.\label{eq:upsilon-deriv-eq}
\end{equation}
\item [{(Exponential~Bound)}] There is a constant $D=D(\Gamma)>1$ such
that 
\begin{equation}
\Upsilon_{\i}\leq KD^{-|\i|}.\label{eq:upsilon-exp-bound}
\end{equation}
\end{description}
\end{lem}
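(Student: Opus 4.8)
The plan is to derive everything from the explicit structure of the Schottky data together with the uniform eventual expansion of the Bowen--Series map. The starting observation is the product formula $I_{\i} = \gamma_{\i'}(I_{i_{|\i|}})$, so that $\Upsilon_{\i}$ is, up to bounded distortion, the value of $|\gamma_{\i'}'|$ on the relevant interval. First I would establish the \textbf{(Derivatives)} bound, which is really the engine for the rest. By the mean value theorem, $\Upsilon_{\i} = |I_{\i}| = |\gamma_{\i'}'(\xi)| \cdot |I_{i_{|\i|}}|$ for some $\xi \in I_{i_{|\i|}}$, and $|I_{i_{|\i|}}|$ ranges over a finite set of positive numbers, hence is pinched between two constants depending only on $\Lambda$. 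So it suffices to show that $|\gamma_{\i'}'|$ has bounded distortion on $D_{i_{|\i|}}$ (or on a slightly enlarged disc $\bigcup_{|\j|=2} D_{\j}$, on which $\tau$ was arranged to be analytic): that is, $\sup |\gamma_{\i'}'| / \inf |\gamma_{\i'}'| \le K$ uniformly in $\i$. This is the classical \emph{bounded distortion} property for conformal iterated function systems satisfying the open set / disjointness condition, and it follows from the Koebe distortion theorem applied to the univalent maps $\gamma_{\i'}$ on discs whose images are nested with a definite gap coming from the disjointness of the closures of the $D_i$; equivalently one controls $\log|\gamma_{\i'}'|$ via a telescoping sum $\sum_k \big(\log|\gamma_{i_1\cdots i_k}'(\cdot)| - \log|\gamma_{i_1\cdots i_{k-1}}'(\cdot)|\big)$ whose terms decay geometrically because each inner map $\gamma_{i_k}$ is a strict contraction on the relevant disc with derivative Hölder-controlled. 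This step — the uniform bounded distortion estimate — is the main obstacle; once it is in hand, the remaining four assertions are essentially bookkeeping.

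Given \textbf{(Derivatives)}, I would next prove \textbf{(Rough multiplicativity)}. Using the chain rule, for $x\in D_{i_{|\i\j|}}$ we have $|\gamma_{(\i\j)'}'(x)| = |\gamma_{\i}'(\gamma_{\j'}(x))|\cdot |\gamma_{\j'}'(x)|$; comparing $|\gamma_{\i}'|$ on $D_{j_1}\subset D_{i_{|\i|}}$ with $|\gamma_{\i'}'|$ on $D_{i_{|\i|}}$ (another bounded-distortion comparison, absorbing the single extra factor $|\gamma_{i_{|\i|}}'|$ which is bounded above and below on the finitely many relevant discs) gives $|\gamma_{(\i\j)'}'| \asymp |\gamma_{\i'}'|\cdot|\gamma_{\j'}'|$ with multiplicative constants depending only on $\Lambda$. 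Feeding this back through \textbf{(Derivatives)} (applied to $\i\j$, to $\i$, and to $\j$) converts it into \eqref{eq:upsilon-mult-eq-1}. The \textbf{(Mirror estimate)} is handled the same way: $\gamma_{\overline{\j}} = \gamma_{\j}^{-1}$, so $\Upsilon_{\overline\j}$ is governed by $|(\gamma_{\j}^{-1})'|$ evaluated near $I_{\j}$, which by the inverse function theorem is $|\gamma_{\j}'|^{-1}$ at the preimage point; since $I_{\j}$ has bounded length and lies in $D_{j_1}$, and since $\gamma_{\j}$ maps into $\overline{D_{j_1}}$ with controlled distortion, one gets $\Upsilon_{\overline\j} \asymp |\gamma_{\j'}'|^{-1}\cdot \text{(bounded)} \asymp \Upsilon_{\j}\cdot\text{(bounded)}$ — here again bounded distortion is what makes the comparison uniform in $\j$.

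Finally, for the \textbf{(Exponential Bound)} \eqref{eq:upsilon-exp-bound} I would invoke the eventual expansion of the Bowen--Series map: there exist $m\ge 1$ and $\kappa>1$ with $|(T^{m})'| \ge \kappa$ on $\D$, hence, iterating and using bounded distortion to pass from the $T^m$-derivative to derivatives of individual $\gamma_i$-words, one gets $|\gamma_{\i'}'| \le C\kappa^{-\lfloor |\i|/m\rfloor}$; combining with \textbf{(Derivatives)} yields $\Upsilon_{\i}\le K D^{-|\i|}$ with $D \eqdf \kappa^{1/m}>1$. (Alternatively, one can get the same decay directly by iterating \textbf{(Rough multiplicativity)} together with the fact that $\sup_i \Upsilon_{(k,k)} < 1$ for every length-two prefix, i.e. no single generator is distortion-trivial on the disjoint discs.) Taking $K$ to be the maximum of the finitely many constants produced in the four parts gives a single $K=K(\Lambda)>1$ that works for all the stated inequalities.
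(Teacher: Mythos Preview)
Your plan is essentially the standard one and matches what the paper does: the paper gives no self-contained proof here but defers to \cite[Lemmas 3.1, 3.2, 3.4, 3.5]{MN1} (building on \cite{BDFourier}), where exactly the bounded-distortion / Koebe-type argument you outline is carried out. Your ordering (Derivatives $\Rightarrow$ Rough multiplicativity $\Rightarrow$ Exponential bound via eventual expansion of $T$) is the right one, and the identification of uniform bounded distortion as the ``engine'' is spot on.

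There is, however, a genuine slip in your sketch of the \textbf{Mirror estimate}. As written, your chain ``$\Upsilon_{\overline{\j}} \asymp |\gamma_{\j'}'|^{-1} \asymp \Upsilon_{\j}$'' would force $\Upsilon_{\overline{\j}} \asymp \Upsilon_{\j}^{-1}$ rather than $\Upsilon_{\overline{\j}} \asymp \Upsilon_{\j}$. The issue is that the inverse-function-theorem step lands you at a point $z=\gamma_{\j}^{-1}(y)\in I_{\overline{\j}}\subset D_{\overline{j_n}}$, which is \emph{near the pole} of $\gamma_{\j}$, so $|\gamma_{\j}'(z)|$ is large there and cannot be compared via bounded distortion to $|\gamma_{\j'}'|$ on $D_{j_n}$ (the disc relevant to $\Upsilon_{\j}$). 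The clean fix is to use the M\"obius formula directly: writing $\gamma_{\j}=\begin{pmatrix} a & b\\ c & d\end{pmatrix}\in\SL_2(\R)$, one has $|\gamma_{\j}'(x)|=|cx+d|^{-2}$ and $|(\gamma_{\j}^{-1})'(y)|=|cy-a|^{-2}$; the poles $-d/c$ and $a/c$ lie in $D_{\overline{j_n}}$ and $D_{j_1}$ respectively, so for $x\in D_{j_n}$ and $y\in D_{\overline{j_1}}$ both distances $|x+d/c|$ and $|y-a/c|$ are pinched between positive constants depending only on the Schottky data. Hence both $\Upsilon_{\j}$ and $\Upsilon_{\overline{\j}}$ are $\asymp |c|^{-2}$, which gives \eqref{eq:upsilon-mirror-eq}. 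This is the argument in the cited references; once you replace your inverse-function-theorem line with this isometric-circle computation, the rest of your plan goes through.
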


The proof of these bounds follow from \cite[Lemmas 3.1, 3.2, 3.4, 3.5]{MN1}
and follows previous estimates from \cite{BDFourier}. The following
result is \cite[Lemma 3.10]{MN1}.
\begin{lem}
\label{lem:Pressure-estimate}For all $r_{1},Q\in\R$ such that $0\leq r_{1}<Q$
there is a constant $C=C(r_{1},Q)>0$ such that for all $N\in\N_{0}$
and $r\in[r_{1},Q]$ we have
\begin{equation}
\sum_{\i\in\W_{N}}\Upsilon_{\i}^{r}\leq C\exp(NP(r_{1})).\label{eq:pressure-est-2}
\end{equation}
\end{lem}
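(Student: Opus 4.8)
The plan is to reduce the uniform-in-$r$ claim to a pointwise-in-$r$ estimate carrying the exact exponential rate, and then to interpolate. The first and main step is to show that for each fixed $\beta\ge0$ there is a constant $C_\beta=C_\beta(\Lambda)>0$ with $\sum_{\i\in\W_N}\Upsilon_\i^\beta\le C_\beta\,e^{NP(\beta)}$ for all $N\in\N_0$. The key is to recognise the left-hand side, up to bounded multiplicative error, as a Birkhoff-sum partition function for the Bowen--Series map $T$ and the H\"older potential $\varphi_\beta\eqdf-\beta\log|T'|$. Indeed, from $T|_{D_i}=\gamma_{\bar i}$ and the definition of the intervals one gets the cylinder relation $T^{N-1}(I_\i)=I_{i_N}$ for $\i\in\W_N$, with local inverse $\gamma_{\i'}$; the chain rule then gives $|(T^{N-1})'(x)|=|\gamma'_{\i'}(T^{N-1}x)|^{-1}$ for $x\in I_\i$, and the Derivatives bound \eqref{eq:upsilon-deriv-eq} yields $\Upsilon_\i\asymp|(T^{N-1})'(x)|^{-1}$ uniformly over $x\in I_\i$ and $\i\in\W_N$. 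Hence $\Upsilon_\i^\beta\asymp e^{S_{N-1}\varphi_\beta(x)}$ for $x\in I_\i$, where $S_k g(x)=\sum_{j=0}^{k-1}g(T^jx)$.

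Since $T$ is uniformly expanding and $\log|T'|$ is H\"older --- exactly the regularity that underlies Lemma~\ref{lem:coarse-homomorphism} --- the thermodynamic formalism for $(T,\varphi_\beta)$ applies (see \cite{Bowen}, and \cite{BDFourier,Borthwick} for this concrete setting). The Ruelle--Perron--Frobenius theorem gives a positive, bounded, bounded-below eigenfunction $h_\beta$ of the transfer operator with eigenvalue $e^{P(\beta)}$; equivalently the equilibrium state $\mu_\beta$ of $\varphi_\beta$ satisfies the Gibbs inequality, which by the previous paragraph reads $\mu_\beta(I_\i)\asymp_\beta e^{-NP(\beta)}\Upsilon_\i^\beta$ for $\i\in\W_N$. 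The cylinders $\{I_\i\}_{\i\in\W_N}$ are pairwise disjoint and $\mu_\beta$ is a finite measure, so summing the upper Gibbs bound over $\i\in\W_N$ gives $\sum_{\i\in\W_N}\Upsilon_\i^\beta\le C_\beta\,e^{NP(\beta)}\sum_{\i\in\W_N}\mu_\beta(I_\i)\le C_\beta\,e^{NP(\beta)}$, as desired (the case $N=0$ is trivial with the convention $\Upsilon_\emptyset\eqdf1$).

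To make the constant uniform on $[r_1,Q]$, fix $r\in[r_1,Q]$ and write $r=(1-t)r_1+tQ$ with $t\in[0,1]$. H\"older's inequality applied to $\sum_{\i\in\W_N}\Upsilon_\i^{(1-t)r_1}\Upsilon_\i^{tQ}$ with conjugate exponents $\tfrac1{1-t}$, $\tfrac1t$, followed by the pointwise estimate at $\beta=r_1$ and $\beta=Q$, gives
\[
\sum_{\i\in\W_N}\Upsilon_\i^{r}\le\Big(\sum_{\i\in\W_N}\Upsilon_\i^{r_1}\Big)^{1-t}\Big(\sum_{\i\in\W_N}\Upsilon_\i^{Q}\Big)^{t}\le C_{r_1}^{1-t}C_Q^{t}\,e^{N[(1-t)P(r_1)+tP(Q)]}.
\]
By Bowen's theorem $P$ is strictly decreasing, so $P(Q)\le P(r_1)$ and therefore the convex combination $(1-t)P(r_1)+tP(Q)$ is $\le P(r_1)$; moreover $C_{r_1}^{1-t}C_Q^{t}\le\max(C_{r_1},C_Q)$. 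Hence $\sum_{\i\in\W_N}\Upsilon_\i^{r}\le C\,e^{NP(r_1)}$ with $C\eqdf\max(C_{r_1},C_Q)=C(r_1,Q)$, which is \eqref{eq:pressure-est-2}.

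The main obstacle is the pointwise estimate with the \emph{sharp} constant multiplying $e^{NP(\beta)}$. The rough multiplicativity \eqref{eq:upsilon-mult-eq-1} only yields sub- and super-multiplicativity of $\sum_{\i\in\W_N}\Upsilon_\i^\beta$ up to a bounded factor, which determines the exponential growth rate but overshoots the constant --- already at $\beta=0$ it produces $(2d)^N$ rather than the correct $(2d-1)^N=e^{NP(0)}$. Pinning down the sharp constant forces one to invoke the bounded-distortion/Gibbs input, equivalently the existence of a positive bounded eigenfunction of the transfer operator; this is the only point where the uniform expansion of $T$ and the H\"older regularity of $\log|T'|$ enter in an essential way, the remaining ingredients (H\"older interpolation and the monotonicity of $P$) being soft.
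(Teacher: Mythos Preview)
The paper does not prove this lemma at all; it simply quotes it as \cite[Lemma 3.10]{MN1}. So there is no in-paper argument to compare against, and your task was really to supply a proof from scratch.

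Your argument is correct. The heart of the matter is exactly what you isolate: for each fixed $\beta\ge0$ one needs $\sum_{\i\in\W_N}\Upsilon_\i^\beta\le C_\beta e^{NP(\beta)}$ with the \emph{correct} exponential rate, and this is a genuine Gibbs/Ruelle--Perron--Frobenius statement for the expanding map $T$ with potential $-\beta\log|T'|$. Your identification $\Upsilon_\i\asymp |(T^{N-1})'(x)|^{-1}$ for $x\in I_\i$ via \eqref{eq:upsilon-deriv-eq} is right (up to the harmless $N$ versus $N-1$ shift in the Birkhoff sum, which is absorbed into $C_\beta$), and the summation of the Gibbs upper bound over the disjoint cylinders is the standard way to finish. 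Your diagnosis in the final paragraph is also accurate: crude submultiplicativity from \eqref{eq:upsilon-mult-eq-1} alone, iterated from $N=1$, gives only $a_N\le a_1^N$, which at $\beta=0$ is $(2d)^N$ rather than $(2d-1)^N=e^{NP(0)}$; the sharp rate genuinely requires the spectral/Gibbs input.

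One simplification: the H\"older interpolation in your second step is more than you need. By the exponential bound \eqref{eq:upsilon-exp-bound} there is $N_0$ (depending only on $\Lambda$) such that $\Upsilon_\i<1$ for all $\i\in\W_N$ with $N\ge N_0$; for such $\i$ and any $r\ge r_1$ one has directly $\Upsilon_\i^r\le\Upsilon_\i^{r_1}$, so $\sum_{\i\in\W_N}\Upsilon_\i^r\le\sum_{\i\in\W_N}\Upsilon_\i^{r_1}\le C_{r_1}e^{NP(r_1)}$. The finitely many $N<N_0$ are absorbed into the constant, and the dependence on $Q$ disappears entirely from this reduction step (it only enters, if at all, through uniformity of the bounded-distortion constants, which here are already uniform in $\beta$ on compact sets). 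Your H\"older route is of course valid, and has the mild conceptual advantage of making the convexity of $P$ visible, but it is not needed for the stated inequality.
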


\begin{lem}
\label{lem:norm-of-variation}Suppose that $\K\subset\C$ is compact.
There is a constant $C=C(\K)>0$ such that for all $\ell\in\N$ and
$s,s_{0}\in\K$
\[
\|\L_{s,\rho_{n}^{0}}^{\ell}-\L_{s_{0},\rho_{n}^{0}}^{\ell}\|\leq|s-s_{0}|C^{\ell},
\]
where $\Vert.\Vert$ denotes the operator norm on $\H(\D,V_{n}^{0})$.
\end{lem}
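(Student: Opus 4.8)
The plan is to reduce the bound to a telescoping estimate on words and then invoke the rough-multiplicativity and pressure bounds from Lemmas~\ref{lem:coarse-homomorphism} and~\ref{lem:Pressure-estimate}. First I would write the $\ell$-th iterate of the transfer operator explicitly: for $f\in\H(\D,V_n^0)$ and $x\in D_j$,
\[
\L_{s,\rho_n^0}^{\ell}[f](x)=\sum_{\substack{\i\in\W_{\ell}\\ \i\to j}}\bigl(\gamma'_{\i}(x)\bigr)^{s}\,\rho_n^0(\gamma_{\i}^{-1})\,f(\gamma_{\i}(x)),
\]
where $\bigl(\gamma'_{\i}(x)\bigr)^{s}=e^{-s\tau_{\i}(x)}$ with $\tau_{\i}$ the appropriate sum of the (analytically continued) $\tau$-cocycle along the word $\i$; the key point is that $\rho_n^0$ is unitary, so the representation factors contribute norm $\le 1$ and drop out of every estimate, making the bound independent of $n$. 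The difference $\L_{s,\rho_n^0}^{\ell}-\L_{s_0,\rho_n^0}^{\ell}$ is then the same sum with $e^{-s\tau_{\i}(x)}$ replaced by $e^{-s\tau_{\i}(x)}-e^{-s_0\tau_{\i}(x)}$.

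Next I would estimate this scalar difference pointwise. Using $|e^{-a}-e^{-b}|\le|a-b|\,e^{\max(-\Re a,-\Re b)}$ with $a=s\tau_{\i}(x)$, $b=s_0\tau_{\i}(x)$, we get
\[
\bigl|e^{-s\tau_{\i}(x)}-e^{-s_0\tau_{\i}(x)}\bigr|\le|s-s_0|\,|\tau_{\i}(x)|\,e^{(\max_{\K}|\Re s|)\,|\tau_{\i}(x)|}.
\]
By the derivative estimate~\eqref{eq:upsilon-deriv-eq} and rough multiplicativity~\eqref{eq:upsilon-mult-eq-1}, $|\gamma'_{\i}(x)|$ is comparable (up to a constant to the power $\ell$, or even a single constant after telescoping) to $\Upsilon_{\i\cdot j}\asymp\Upsilon_{\i}$, so $|\tau_{\i}(x)|=|\log|\gamma'_{\i}(x)||\le C\ell$ by the exponential bound~\eqref{eq:upsilon-exp-bound} — more precisely $|\tau_{\i}(x)|\le C(1+\ell)$ — and $e^{(\max_\K|\Re s|)|\tau_{\i}(x)|}\le C^{\ell}$. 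Meanwhile the $H^1$-to-$L^\infty$/Bergman reproducing bounds control $\|f(\gamma_{\i}(x))\|$ and the integration over $\D$ in terms of $\|f\|_{\H}$, with the geometric contraction encoded again through the $\Upsilon_{\i}$; combining, and summing over $\i\in\W_{\ell}$ using Lemma~\ref{lem:Pressure-estimate} (with $r_1$ taken as, say, $0$ and $Q$ a large real bound on $\Re s$ over $\K$, or more carefully $r_1$ slightly below $\min_\K\Re s$ if that is positive), the word-sum is at most $C\exp(\ell P(r_1))$. Folding the extra factor $|\tau_{\i}(x)|\le C\ell$ into the exponential base (it is subexponential) and absorbing all constants-to-the-$\ell$ gives the claimed bound $|s-s_0|\,C^{\ell}$.

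The main obstacle — really the only non-bookkeeping point — is controlling the extra weight $e^{(\max_\K|\Re s|)|\tau_{\i}(x)|}$ that appears because $\K$ is an arbitrary compact set and may contain points with $\Re s$ negative or large, so that $(\gamma'_{\i}(x))^{s}$ is not bounded by $1$ as it would be for $\Re s\ge 0$ in the classical trace-class setting. The resolution is that $|\tau_{\i}(x)|$ grows only \emph{linearly} in $\ell$ (by~\eqref{eq:upsilon-deriv-eq} and~\eqref{eq:upsilon-exp-bound}, $\Upsilon_{\i}$ is pinched between $K^{-1}D^{-|\i|}$-type bounds from below via a crude lower bound and $KD^{-|\i|}$ from above), so the weight is at most $e^{C\ell}=C^{\ell}$, harmless at the level of an $\ell$-dependent constant. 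I would also need a uniform lower bound $\Upsilon_{\i}\ge c\,D_0^{-|\i|}$ for some $D_0>1$, which follows from rough multiplicativity applied letter-by-letter starting from $\min_{i}\Upsilon_{(i)}>0$; this furnishes the matching lower estimate on $|\gamma'_{\i}(x)|$ needed to bound $|\tau_{\i}(x)|$ from above. Everything else is the routine triangle inequality, the reproducing property of $B_\D$, unitarity of $\rho_n^0$, and a single application of the pressure estimate, so no subtlety remains once the linear-growth bound on $|\tau_{\i}|$ is in hand.
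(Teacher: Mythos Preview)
Your proposal is correct and follows essentially the same approach as the paper's proof: write the iterate as a sum over words, estimate $|e^{-s\tau_{\i}}-e^{-s_0\tau_{\i}}|$ via the mean-value-type inequality, bound $|\tau_{\i}|$ linearly in $\ell$, invoke unitarity of $\rho_n^0$ and the Bergman reproducing property, and absorb everything into $C^{\ell}$. The only differences are two minor overcomplications on your side: the paper bounds $|\tau^{(\ell)}(\gamma_{\i}z)|\le\ell M$ directly from the continuity of the single-step $\tau$ on the compact set $\overline{\cup_{|\i|=2}D_{\i}}$ (so no lower $\Upsilon$-bound is needed), and then sums over $\i\in\W_{\ell}$ by the crude cardinality bound $\#\W_{\ell}\le(2d)^{\ell}$ rather than the pressure estimate, which is unnecessary here since the target is merely $C^{\ell}$.
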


\begin{proof}
Let $f\in\H(\D,V_{n}^{0})$. We write 
\[
\Vert\L_{s,\rho_{n}^{0}}^{\ell}(f)-\L_{s_{0},\rho_{n}^{0}}^{\ell}(f)\Vert^{2}=\int_{\D}\Vert\L_{s,\rho_{n}^{0}}^{\ell}(f)-\L_{s_{0},\rho_{n}^{0}}^{\ell}(f)\Vert_{V_{n}^{0}}^{2}dm
\]
\[
=\sum_{j=1}^{2d}\int_{D_{j}}\left\Vert \sum_{\i\rightarrow j,\,|\i|=\ell}\left(e^{-s\tau^{(\ell)}(\gamma_{\i}z)}-e^{-s_{0}\tau^{(\ell)}(\gamma_{\i}z)}\right)\rho_{n}^{0}(\gamma_{\i}^{-1})f(\gamma_{\i}z)\right\Vert _{V_{n}^{0}}^{2}dm(z),
\]
where we have set
\[
\tau^{(\ell)}(\gamma_{\i}z)\eqdf\tau(\gamma_{i_{1}\ldots i_{\ell}}z)+\tau(\gamma_{i_{2}\ldots i_{\ell}}z)+\ldots+\tau(\gamma_{i_{\ell}}z).
\]
Notice that there exists $M>0$ independent of $\ell$ such that for
all $z,\i$ above we have $\vert\tau^{(\ell)}(\gamma_{\i}z)\vert\leq\ell M$.
We will then use the following basic bound, valid for all $z_{1},z_{2}\in\C$:
\[
\left|e^{z_{1}}-e^{z_{2}}\right|\leq\vert z_{1}-z_{2}\vert e^{\max\left\{ \Re(z_{1}),\Re(z_{2})\right\} },
\]
which yields 
\[
\left|e^{-s\tau^{(\ell)}(\gamma_{\i}z)}-e^{-s_{0}\tau^{(\ell)}(\gamma_{\i}z)}\right|\leq M\ell\vert s-s_{0}\vert e^{\ell M\max\left\{ \vert s_{0}\vert,\vert s\vert\right\} }.
\]
Using unitarity of the representation $\rho_{n}^{0}$ and triangle
inequality leads to the very crude bound 
\begin{align*}
 & \left\Vert \sum_{\i\rightarrow j,\,|\i|=\ell}\left(e^{-s\tau^{(\ell)}(\gamma_{\i}z)}-e^{-s_{0}\tau^{(\ell)}(\gamma_{\i}z)}\right)\rho_{n}^{0}(\gamma_{\i}^{-1})f(\gamma_{\i}z)\right\Vert _{V_{n}^{0}}\\
\leq & M\ell\vert s-s_{0}\vert e^{\ell M\max\left\{ \vert s_{0}\vert,\vert s\vert\right\} }\#\mathcal{W_{\ell}}\left(\sup_{\i\rightarrow j}\sup_{z\in D_{j}}\Vert f(\gamma_{\i}z)\Vert_{V_{n}^{0}}\right),
\end{align*}
which then gives
\[
\Vert\L_{s,\rho_{n}^{0}}^{\ell}(f)-\L_{s_{0},\rho_{n}^{0}}^{\ell}(f)\Vert^{2}\leq m(\D)\left(M\ell\vert s-s_{0}\vert\right)^{2}e^{2\ell M\max\left\{ \vert s_{0}\vert,\vert s\vert\right\} }\left(\#\mathcal{W_{\ell}}\right)^{2}\left(\sup_{\i\rightarrow j}\sup_{z\in D_{j}}\Vert f(\gamma_{\i}z)\Vert_{V_{n}^{0}}^{2}\right).
\]
By the reproducing property of Bergman's kernel, and since all $\gamma_{\i}$
map uniformly each $D_{j}$ in a compact subset of $\D$, we deduce
that there exists a constant $C>0$ uniform in $\ell$ such that 
\[
\sup_{\i\rightarrow j}\sup_{z\in D_{j}}\Vert f(\gamma_{\i}z)\Vert_{V_{n}^{0}}^{2}\leq C\Vert f\Vert_{\H(\D,V_{n}^{0})}^{2}.
\]
 The proof is now done if we use the fact that $s,s_{0}$ are in a
compact set.
\end{proof}
We then define the following operators $A_{i}(s):\H(\D,\C)\rightarrow\H(\D,\C)$
by
\begin{equation}
A_{i}(s)[f](x)\eqdf\mathbf{1}\{x\notin D_{i}\}e^{-s\tau(\gamma_{\bar{i}}(x))}f(\gamma_{\bar{i}}(x)).\label{eq:A_i-def}
\end{equation}
Being a composition operator which maps admissible discs $D_{i}$
compactly into $\D$, it is easy to see that each operator $A_{i}(s)$
is trace class on $\H(\D;\C).$ If $\i=(i_{1},i_{2},\ldots,i_{\ell})$
is an admissible word, we can then define the following composition
\[
A(s)_{\i}\eqdf A_{i_{1}}(s)A_{i_{2}}(s)\ldots A_{i_{\ell}}(s)[f]=\mathbf{1}\{x\notin D_{i_{1}}\}e^{-s\tau^{(\ell)}(\gamma_{\bar{\i}}x)}f\circ\gamma_{\bar{\i}}(x),
\]
where we have set for all admissible word $\j$ of length $\ell$,
$\tau^{(\ell)}(\gamma_{\j}z)=\tau(\gamma_{j_{1}\ldots j_{\ell}}z)+\tau(\gamma_{j_{2}\ldots j_{\ell}}z)+\ldots+\tau(\gamma_{j_{\ell}}z).$
It will be useful in the following to have a kernel formula for the
action of the adjoint operator $A(s)_{\i}^{*}$.
\begin{lem}
Using the above notations, we have for all $f\in\H(\D,\C)$, 
\[
A(s)_{\i}^{*}[f](x)=\int_{\D\setminus D_{i_{1}}}e^{-\overline{s}\overline{\tau^{(\ell)}(\gamma_{\overline{\i}}z)}}B_{\D}(x,\gamma_{\overline{\i}}z)f(z)dm(z).
\]
\end{lem}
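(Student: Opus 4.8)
The statement is the computation of the adjoint of the weighted composition operator $A(s)_{\i}$ with respect to the Bergman inner product, and the natural plan is to verify the candidate formula by showing that the integral operator it describes reproduces the pairing $\langle A(s)_{\i}g,f\rangle$ for all $f,g\in\H(\D,\C)$. Since $V=\C$ here, the inner product is the scalar one $\langle a,b\rangle=a\overline b$, and I will repeatedly use the conjugate-symmetry $B_{\D}(x,y)=\overline{B_{\D}(y,x)}$ of the Bergman kernel together with its reproducing property $\int_{\D}B_{\D}(w,z)g(z)\,dm(z)=g(w)$, valid for every $w\in\D$ and every $g\in\H(\D,\C)$.

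First I would settle the domain bookkeeping. Writing $\overline{\i}=(\overline{i_{\ell}},\dots,\overline{i_{1}})$, admissibility of $\i$ gives $i_{k}\neq\overline{i_{k+1}}$, hence $\overline{i_{k}}\neq i_{k+1}$, so the closures of $D_{\overline{i_{k}}}$ and $D_{i_{k+1}}$ are disjoint; applying the ping-pong mapping property $\gamma_{j}(\hat\C\setminus D_{\bar j})=\overline{D_{j}}$ successively then shows that $\gamma_{\overline{\i}}$ carries $\hat\C\setminus D_{i_{1}}$ into $\overline{D_{\overline{i_{\ell}}}}$, a fixed compact subset of $\D$. In particular, for $z\in\D\setminus D_{i_{1}}$ the point $\gamma_{\overline{\i}}(z)$ lies in a fixed compact subset of $\D$, the kernel value $B_{\D}(x,\gamma_{\overline{\i}}z)$ is bounded there and holomorphic in $x$, and $|e^{-\overline s\,\overline{\tau^{(\ell)}(\gamma_{\overline{\i}}z)}}|\le e^{\ell M|s|}$ with $M$ as in the proof of Lemma~\ref{lem:norm-of-variation}. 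Consequently the right-hand side defines a holomorphic function of $x$ (differentiate under the integral sign) of finite Bergman norm (by the reproducing bound $\sup_{K}\|\cdot\|_{\C}\le C_{K}\|\cdot\|_{\H}$); call the resulting bounded operator $\widetilde A(s)_{\i}$.

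Next I would compute the two pairings. From the definition of $A(s)_{\i}$ together with the indicator $\mathbf 1\{x\notin D_{i_{1}}\}$,
\[
\langle A(s)_{\i}g,f\rangle=\int_{\D\setminus D_{i_{1}}}e^{-s\tau^{(\ell)}(\gamma_{\overline{\i}}x)}g(\gamma_{\overline{\i}}x)\,\overline{f(x)}\,dm(x).
\]
On the other hand, conjugating the candidate formula and using $\overline{\overline s}=s$, $\overline{\overline{\tau^{(\ell)}(\cdot)}}=\tau^{(\ell)}(\cdot)$ and $\overline{B_{\D}(x,\gamma_{\overline{\i}}z)}=B_{\D}(\gamma_{\overline{\i}}z,x)$,
\[
\langle g,\widetilde A(s)_{\i}f\rangle=\int_{\D}g(x)\int_{\D\setminus D_{i_{1}}}e^{-s\tau^{(\ell)}(\gamma_{\overline{\i}}z)}B_{\D}(\gamma_{\overline{\i}}z,x)\,\overline{f(z)}\,dm(z)\,dm(x).
\]
The bounds from the previous paragraph dominate the double integrand by an integrable function on $\D\times(\D\setminus D_{i_{1}})$, so Fubini applies; after interchanging the order of integration the inner integral becomes $\int_{\D}B_{\D}(\gamma_{\overline{\i}}z,x)g(x)\,dm(x)=g(\gamma_{\overline{\i}}z)$ by the reproducing property, legitimate since $\gamma_{\overline{\i}}z\in\D$. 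This converts the expression into exactly $\langle A(s)_{\i}g,f\rangle$. Hence $\langle g,\widetilde A(s)_{\i}f\rangle=\langle g,A(s)_{\i}^{*}f\rangle$ for all $g\in\H(\D,\C)$, and since both $\widetilde A(s)_{\i}f$ and $A(s)_{\i}^{*}f$ lie in $\H(\D,\C)$, they agree, which is the asserted formula.

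The only steps that demand care are the two interchanges — the use of Fubini and the application of the reproducing property under the integral sign — and both are controlled by the single observation that $\gamma_{\overline{\i}}$ maps $\D\setminus D_{i_{1}}$ into a fixed compact subset of $\D$, on which every kernel and function in sight is uniformly bounded; the remaining manipulations are routine bookkeeping with complex conjugates. I therefore do not anticipate a genuine obstacle, only the usual care needed to keep track of the conjugations on $s$, on $\tau^{(\ell)}$, and on the Bergman kernel.
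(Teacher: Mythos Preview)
Your proof is correct and follows essentially the same route as the paper: both arguments hinge on the observation that $\gamma_{\overline{\i}}$ maps $\D\setminus D_{i_{1}}$ into a fixed compact subset of $\D$, which makes the Bergman kernel uniformly bounded and justifies Fubini, after which the reproducing property of $B_{\D}$ does the work. The only cosmetic difference is that the paper expands $f(\gamma_{\overline{\i}}z)$ via the reproducing formula and then applies Fubini, whereas you define the candidate operator first, apply Fubini, and then invoke the reproducing property on $g$; these are the same computation read in opposite directions.
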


\begin{proof}
Using the reproducing kernel property, we have for all $f,g\in\H(\D,\C)$,
\[
\langle A(s)_{\i}f,g\rangle=\int_{\D}\mathbf{1}\{z\notin D_{i_{1}}\}e^{-s\tau^{(\ell)}(\gamma_{\overline{\i}}z)}\int_{\D}B_{D}(\gamma_{\overline{\i}}z,w)f(w)dm(w)\overline{{g(z)}}dm(z).
\]
We observe that since there exists a compact set $K\subset\D$ such
that for all $z\notin D_{i_{1}}$, $\gamma_{\bar{\i}}(z)\in K,$ we
have (by using the explicit formula for the Bergman kernel)
\[
\sup_{z,w\in\D}\vert B_{D}(\gamma_{\bar{\i}}z,w)\mathbf{1}\{z\notin D_{i_{1}}\}\vert<\infty.
\]
Therefore we have
\[
\int_{\D}\mathbf{\int_{\D}1}\{z\notin D_{i_{1}}\}\left|e^{-s\tau^{(\ell)}(\gamma_{\overline{\i}}z)}\right|\vert B(\gamma_{\overline{\i}}z,w)f(w)g(z)\vert dm(z)dm(w)<\infty,
\]
and we can use Fubini's theorem to write
\[
\langle A(s)_{\i}f,g\rangle=\int_{\D}f(w)\overline{\int_{\D}\mathbf{1}\{z\notin D_{i_{1}}\}\overline{e^{-s\tau^{(\ell)}(\gamma_{_{\bar{\i}}}z)}}B_{\D}(w,\gamma_{\overline{\i}}z)g(z)dm(z)}dm(w),
\]
where we have used the fact that $B_{\D}(z,w)=\overline{B_{\D}(w,z)}$
and the proof is done.
\end{proof}
Notice that we have $A_{i}^{*}(s)\neq A_{\overline{i}}(s),$ which
is a source of non-selfadjointness issues and (besides the obvious
infinite dimensional setting) one of the main differences with \cite{BordenaveCollins},
where a symmetry hypothesis is assumed. See $\S\S$\ref{subsec:Conjugation-between-transfer}
for a more detailed discussion of these differences to \cite{BordenaveCollins}.

In this paper, we will have to deal with integral operators acting
on $\H(\D,\C)$ (or their vector valued extensions). We need a simple
criterion to compute their traces, which is as follows.
\begin{lem}
\label{lem:trace-formula}Let $T_{\mathcal{\mathscr{K}}}:\H(\D,\C)\rightarrow\H(\D,\C)$
be an integral operator whose kernel $\mathscr{K}(z,w)$ satisfies
the following properties: assume that $\mathscr{K\in}C^{0}(\overline{\D\times\D}),$
and that for all $w\in\D$, the map $z\mapsto\mathscr{K}(z,w)$ is
holomorphic and extends to a $C^{2}$ function on the boundary $\partial\D$.
Then, if $T_{\mathscr{K}}$ is trace class, we have the identity
\[
\tr(T_{\mathscr{K}})=\int_{\D}\mathscr{K}(z,z)dm(z).
\]
\end{lem}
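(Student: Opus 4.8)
The plan is to reduce to a single admissible disc, compute the trace there against the explicit monomial basis of the Bergman space, recognize the partial sums of the trace as integrals of Taylor truncations of $z\mapsto\mathscr{K}(z,w)$, and then pass to the limit using the boundary regularity of $\mathscr{K}$.

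First I would use the orthogonal splitting $\H(\D,\C)=\bigoplus_{i\in\mathcal{I}}\H(D_i,\C)$. Writing $P_i$ for the orthogonal projection onto $\H(D_i,\C)$ and noting that each $\mathscr{K}(\cdot,w)$ is holomorphic on all of $\D$, the compression $P_iT_{\mathscr{K}}P_i$ is exactly the integral operator on $\H(D_i,\C)$ with kernel $\mathscr{K}\lvert_{D_i\times D_i}$. Since $P_jP_i=0$ for $i\neq j$, cyclicity of the trace gives $\tr(T_{\mathscr{K}})=\sum_{i\in\mathcal{I}}\tr(P_iT_{\mathscr{K}}P_i)$, so it suffices to treat a single disc $D=D(c,r)$.

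On $D$ the functions $\e_k(z)=\sqrt{(k+1)/(\pi r^{2k+2})}\,(z-c)^k$, $k\geq0$, form an orthonormal basis of $\H(D,\C)$, and the elementary computation $\langle f,\e_k\rangle\e_k(z)=a_k(z-c)^k$ for $f=\sum_k a_k(z-c)^k\in\H(D,\C)$ shows that the $N$-th partial sum of the Bergman expansion of $f$ equals its $N$-th Taylor polynomial $S_Nf$ about $c$; equivalently, $B_D^{(N)}(\cdot,w)\eqdf\sum_{k=0}^{N}\overline{\e_k(w)}\e_k$ is the reproducing kernel of $\mathrm{span}\{\e_0,\dots,\e_N\}$. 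Applying Fubini to the finite sum (every factor is continuous on the compact set $\overline{D}$, hence bounded, and $D$ has finite measure) together with this reproducing property gives, for every $N$,
\[
\sum_{k=0}^{N}\langle P_iT_{\mathscr{K}}P_i\,\e_k,\e_k\rangle=\int_{D}\bigl(S_N[\mathscr{K}(\cdot,w)]\bigr)(w)\,dm(w).
\]
Because $P_iT_{\mathscr{K}}P_i$ is trace class, the left-hand side converges to $\tr(P_iT_{\mathscr{K}}P_i)$ as $N\to\infty$.

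It remains to pass to the limit on the right. For fixed $w$, the boundary values of the holomorphic function $z\mapsto\mathscr{K}(z,w)$ are $C^{2}$ on $\partial D$, so their Fourier coefficients -- which coincide with the Taylor coefficients $a_k(w)$ -- satisfy $|a_k(w)|=O(k^{-2}r^{-k})$; hence the Taylor series converges absolutely and uniformly on $\overline{D}$, so $\bigl(S_N[\mathscr{K}(\cdot,w)]\bigr)(w)\to\mathscr{K}(w,w)$, with $\bigl|\bigl(S_N[\mathscr{K}(\cdot,w)]\bigr)(w)\bigr|\leq\sum_k|a_k(w)|r^k$ bounded by a constant multiple of $\sum_k k^{-2}$. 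Bounded convergence on the finite-measure set $D$ then yields $\int_D(S_N[\mathscr{K}(\cdot,w)])(w)\,dm(w)\to\int_D\mathscr{K}(w,w)\,dm(w)$, so $\tr(P_iT_{\mathscr{K}}P_i)=\int_{D_i}\mathscr{K}(z,z)\,dm(z)$, and summing over $i$ gives $\tr(T_{\mathscr{K}})=\int_{\D}\mathscr{K}(z,z)\,dm(z)$. The main obstacle is precisely this last interchange of limit and integral: it needs the $O(k^{-2}r^{-k})$ bound on $a_k(w)$ to be uniform in $w$, i.e. the $C^2$-control must be (locally) uniform in the parameter $w$ rather than merely pointwise. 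This holds in every situation where the lemma is applied in this paper, since there the ``other'' variable is confined to a compact subset of the interior of $\D$, making the kernels real-analytic up to the boundary with uniform estimates; alternatively one can regularize by Abel summation, replacing $B_D^{(N)}(\cdot,w)$ by $z\mapsto\frac{1}{\pi r^{2}}\bigl(1-\rho(z-c)\overline{(w-c)}/r^{2}\bigr)^{-2}$ and letting $\rho\uparrow1$, which makes the exchange of $\sum_k$ with the double integral immediate but leaves essentially the same boundary estimate to be carried out.
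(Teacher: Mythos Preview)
Your argument is essentially the same as the paper's: reduce to a single disc, compute the trace against the monomial basis, identify the summands with Taylor coefficients of $z\mapsto\mathscr{K}(z,w)$, and use the $C^{2}$ boundary regularity to get an $O(k^{-2})$ decay that justifies interchanging sum and integral. The paper phrases the final step as a direct uniform bound on $|a_{\ell}(w)|$ rather than via bounded convergence on Taylor truncations, but this is cosmetic. The uniformity-in-$w$ issue you flag is real and is handled in the paper simply by asserting ``uniformly for all $w\in\overline{\mathbb{D}}$'' after the integration by parts; as you observe, in all applications in the paper the kernels are smooth up to the boundary with uniform control in $w$, so the assertion is harmless there.
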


\begin{proof}
It is enough to work in the unit disc $\mathbb{D}$ to simplify. Let
$(e_{p})_{\{p\in\N\}}$ be the Hilbert basis of $\H(\mathbb{D},\C)$
given explicitly by 
\[
e_{p}(z)=\sqrt{\frac{p+1}{\pi}}z^{p}.
\]
Because $T_{\mathscr{K}}$ is assumed to be trace class, we have
\[
\tr(T_{\mathscr{K}})=\sum_{p=0}^{\infty}\langle T_{\mathscr{K}}e_{p},e_{p}\rangle=\sum_{p}\int_{\mathbb{D}}\int_{\mathbb{D}}\mathscr{K}(z,w)e_{p}(w)dm(w)\overline{e_{p}(z)}dm(z),
\]
which by Fubini can be rewritten as
\[
\tr(T_{\mathscr{K}})=\sum_{p}\int_{\mathbb{D}}\int_{\mathbb{D}}\mathscr{K}(z,w)\overline{e_{p}(z)}dm(z)e_{p}(w)dm(w).
\]
Since $z\mapsto\mathscr{K}(z,w)$ is holomorphic on the unit disc,
we know that we have the convergent Taylor expansion
\[
\mathscr{K}(z,w)=\sum_{p=0}^{\infty}a_{\ell}(w)z^{\ell},
\]
where for all $0<r<1$, we can write
\[
a_{\ell}(w)=\frac{1}{2\pi}\int_{0}^{2\pi}\mathscr{K}(re^{i\theta},w)\left(re^{i\theta}\right)^{-\ell}d\theta.
\]
 because $z\mapsto\mathscr{K}(z,w)$ is actually continuous up to
the boundary we can write
\[
a_{\ell}(w)=\frac{1}{2\pi}\int_{0}^{2\pi}\mathscr{K}(e^{i\theta},w)\left(e^{i\theta}\right)^{-\ell}d\theta,
\]
and use the $C^{2}$ regularity to integrate by parts two times which
yields uniformly for all $w\in\overline{\mathbb{D}}$,
\[
\vert a_{\ell}(w)\vert=O\left(\ell^{-2}\right).
\]
 We now notice that 
\[
\int_{\mathbb{D}}\mathscr{K}(z,w)\overline{e_{p}(z)}dm(z)=a_{p}(w)\sqrt{\frac{\pi}{p+1}},
\]
which shows that uniformly for all $w\in\overline{\mathbb{D}}$,
\[
\left|\int_{\mathbb{D}}\mathscr{K}(z,w)\overline{e_{p}(z)}dm(z)\right|\vert e_{p}(w)\vert=O\left(p^{-2}\right),
\]
and the series 
\[
\sum_{p}\int_{\mathbb{D}}\mathscr{K}(z,w)\overline{e_{p}(z)}dm(z)e_{p}(w)
\]
are therefore uniformly convergent (with limit $\mathscr{K}(w,w)$)
on $\overline{\mathbb{D}},$ which allows to write
\[
\tr(T_{\mathscr{K}})=\int_{\mathbb{D}}\left(\sum_{p=0}^{\infty}\int_{\mathbb{D}}\mathscr{K}(z,w)\overline{e_{p}(z)}dm(z)e_{p}(w)\right)dm(w)=\int_{\D}\mathscr{K}(w,w)dm(w),
\]
and the proof is done.
\end{proof}
\begin{lem}
\label{lem:Trace-of-product-bound}Let $\K$ be any compact subset
of $\C$. There is $C=C(\K)>0$ such that for any $s=r+it\in\K$,
all $m>0$ and all 2m-tuples of admissible sequences $\i^{1},\ldots,\i^{m},\mathbf{j}^{1},\ldots,\mathbf{j}^{m}$
, we have
\[
\left|\mathrm{tr}\left(A(s)_{\i^{1}}A(s)_{\j^{1}}^{*}A(s)_{\i^{2}}A(s)_{\j^{2}}^{*}\cdots A(s)_{\i^{m}}A(s)_{\j^{m}}^{*}\right)\right|\leq C^{m}\prod_{k=1}^{m}\Upsilon_{\i^{k}}^{r}\Upsilon_{\j^{k}}^{r}.
\]
\end{lem}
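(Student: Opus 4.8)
The plan is to use the cyclic‐trace structure together with the explicit kernel formula for $A(s)_{\i}^{*}$ from the previous lemma to write the trace as an iterated integral, and then estimate the integrand brutally using the derivative/multiplicativity bounds of Lemma \ref{lem:coarse-homomorphism}. First I would observe that each $A(s)_{\i^k}$ is a weighted composition operator, $A(s)_{\i^k}[f](x) = \mathbf 1\{x\notin D_{i^k_1}\} e^{-s\tau^{(\ell_k)}(\gamma_{\overline{\i^k}}x)} f(\gamma_{\overline{\i^k}}x)$, while $A(s)_{\j^k}^{*}$ is the integral operator with kernel $e^{-\overline s\,\overline{\tau^{(\cdot)}(\gamma_{\overline{\j^k}}z)}} B_{\D}(x,\gamma_{\overline{\j^k}}z)\mathbf 1\{z\notin D_{j^k_1}\}$. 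Composing these $2m$ operators around the cycle and applying the trace formula of Lemma \ref{lem:trace-formula} (whose hypotheses are met because composing with the $\gamma$'s maps each disc compactly inside $\D$, so the resulting kernel is smooth up to the boundary and holomorphic in the first variable), the trace becomes a $2m$-fold integral over $\D$ of a product of $m$ Bergman kernels $B_{\D}(\cdot,\gamma_{\overline{\j^k}}\,\cdot)$ evaluated at points shifted by the various $\gamma_{\overline{\i^k}},\gamma_{\overline{\j^k}}$, times the product of the exponential weights.

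The next step is to bound each factor. The exponential weight contributes $\bigl|e^{-s\tau^{(\ell_k)}(\gamma_{\overline{\i^k}}x)}\bigr| = e^{-r\,\tau^{(\ell_k)}(\gamma_{\overline{\i^k}}x)} \cdot e^{t\,\Im(\cdot)}$; the real part is, up to the chain rule, essentially $|\gamma'_{(\i^k)'}|^{r}$ evaluated at the relevant point, which by \eqref{eq:upsilon-deriv-eq} is comparable to $\Upsilon_{\i^k}^{r}$ up to a factor $K^{r}$ per word, and the imaginary part is bounded by $|t|\cdot M\ell_k$ — but since $\Upsilon_{\i^k}\le KD^{-|\i^k|}$ from \eqref{eq:upsilon-exp-bound}, the factor $e^{|t|M\ell_k}$ is absorbed into a geometric series in $|\i^k|$ (equivalently, replace $r$ by $r-\epsilon$ and use that $\Upsilon^{-\epsilon}$ still decays; more cleanly, $e^{|t|M\ell_k}\Upsilon_{\i^k}^{r} \le C \Upsilon_{\i^k}^{r}$ with $C$ depending only on $\K$ provided we allow a harmless adjustment, since $\tau^{(\ell)}$ is comparable to $\ell$ times a bounded quantity and $-\log\Upsilon_{\i^k}\gtrsim |\i^k|$). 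Each Bergman kernel factor $B_{\D}(x_k,\gamma_{\overline{\j^k}}z_k)$ is bounded by a constant, because $\gamma_{\overline{\j^k}}$ maps $\D\setminus D_{j^k_1}$ into a fixed compact subset of $\D$ and the explicit formula \eqref{eq:explicit-bergman} shows the kernel is bounded there; this kills one half of the integrations by the reproducing/boundedness property, each integration over $\D$ contributing the finite factor $m(\D)$. Collecting: each of the $m$ blocks $A(s)_{\i^k}A(s)_{\j^k}^{*}$ contributes a bounded Bergman kernel, a factor $m(\D)$ from an integration, and weights comparable to $\Upsilon_{\i^k}^{r}\Upsilon_{\j^k}^{r}$ — note the $\j^k$ weight appears through the adjoint kernel's own exponential, bounded by $|\gamma'_{(\j^k)'}|^{r}\lesssim \Upsilon_{\j^k}^{r}$ again by \eqref{eq:upsilon-deriv-eq} — so the whole trace is bounded by $C^{m}\prod_{k=1}^{m}\Upsilon_{\i^k}^{r}\Upsilon_{\j^k}^{r}$, as claimed.

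The main obstacle I anticipate is making the boundedness of the product‐kernel and the application of the trace formula fully rigorous when $m$ is large: one must check that the $2m$-fold composition really does produce a kernel satisfying the hypotheses of Lemma \ref{lem:trace-formula}, uniformly, and organize the telescoping of integrations so that exactly $m$ of the $2m$ integrals are "spent" absorbing Bergman kernels while the remaining $m$ each just contribute $m(\D)$ — in particular tracking which composition operators act by pullback (no integration) versus which adjoints carry an integral kernel, and confirming that the pullback operators' weights land at the correct points so that \eqref{eq:upsilon-deriv-eq} applies with the word $\i^k$ (resp. $\j^k$) rather than some concatenation. A secondary technical point is the treatment of the $e^{|t|M\ell}$ factors: these are genuinely present and must be controlled by the exponential decay of $\Upsilon$, so the constant $C$ really does depend on $\K$ through $\max_{s\in\K}|\Im s|$; I would handle this by noting $\Upsilon_{\i}^{r} e^{|t|M|\i|} \le (\sup_{\K}\Upsilon_{\i}^{-\epsilon})\,\Upsilon_{\i}^{r}$ for a tiny fixed $\epsilon$ absorbed into the exponential rate $D$, or simply fold it into $C$ using $|\i|\le C'(-\log\Upsilon_{\i})$.
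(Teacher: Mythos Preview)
Your overall strategy matches the paper's: write each $A(s)_{\i}$ and $A(s)_{\j}^{*}$ via its integral kernel, compose, verify the hypotheses of Lemma~\ref{lem:trace-formula}, and bound the resulting iterated integral factor by factor using the uniform boundedness of the Bergman kernels (which comes from the compact-inclusion property of the maps $\gamma_{\bar{\i}}$) together with the derivative estimate \eqref{eq:upsilon-deriv-eq}.

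There is, however, a real gap in your treatment of the imaginary part of $s$. You assert that the weight contributes a factor $e^{|t|M\ell_k}$ growing with the word length and then try to absorb it via \eqref{eq:upsilon-exp-bound}. Neither of your proposed absorptions gives the stated bound: writing $e^{|t|M|\i|}\le \Upsilon_{\i}^{-|t|MC'}$ degrades the exponent from $r$ to $r-|t|MC'$, which is not what the lemma claims, and ``$\sup_{\K}\Upsilon_{\i}^{-\epsilon}$'' is not a finite constant independent of $\i$. The point you are missing --- and this is the paper's key step --- is that the sum $\tau^{(\ell)}(\gamma_{\bar{\i}}z)$ collapses to $-\log\gamma'_{\bar{\i}}(z)$ with the \emph{principal branch} of the logarithm (both sides are holomorphic on the relevant discs and agree on the real intervals, where all derivatives are positive). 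Hence
\[
\bigl|e^{-s\tau^{(\ell)}(\gamma_{\bar{\i}}z)}\bigr|
= e^{\Re(s\log\gamma'_{\bar{\i}}(z))}
= |\gamma'_{\bar{\i}}(z)|^{r}\, e^{-t\arg\gamma'_{\bar{\i}}(z)}
\le e^{|t|\pi}\,|\gamma'_{\bar{\i}}(z)|^{r},
\]
and the argument is bounded by $\pi$ \emph{independently of the word length}; there is no $e^{|t|M\ell_k}$ to absorb, and the $\K$-dependence enters only through $e^{\pi\max_{s\in\K}|\Im s|}$. After this, \eqref{eq:upsilon-deriv-eq} gives $|\gamma'_{\bar{\i}}(z)|^{r}\le C\,\Upsilon_{\bar{\i}}^{\,r}$, and you then need the mirror estimate \eqref{eq:upsilon-mirror-eq} --- which you omitted --- to pass from $\Upsilon_{\bar{\i}}^{\,r}$ to $\Upsilon_{\i}^{\,r}$. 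Note also a notational slip: the derivative that appears is $\gamma'_{\overline{\i^k}}$, not $\gamma'_{(\i^k)'}$; these are different objects.
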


\begin{proof}
Given an admissible word $\i$, we denote by $\mathscr{K_{\i}}(z,w)$
the integral kernel of $A(s)_{\i}$ which is given by
\[
\mathscr{K_{\i}}(z,w)=\mathbf{1}\{z\notin D_{i_{1}}\}e^{-s\tau^{(\vert\i\vert)}(\gamma_{\overline{\i}}z)}B_{\D}(\gamma_{\overline{\i}}z,w).
\]
Similarly we will denote by $\mathscr{K_{\i}^{*}}(z,w)$ the integral
kernel of $A(s)_{\i}^{*},$ which is
\[
\mathscr{K_{\i}^{*}}(z,w)=\mathbf{1}\{w\notin D_{i_{1}}\}\overline{e^{-s\tau^{(\vert\i\vert)}(\gamma_{\overline{\i}}w)}}B_{\D}(z,\gamma_{\overline{\i}}w).
\]
An important observation is due to the fact that each $\gamma_{\bar{\i}}$
maps $\D\setminus D_{i_{1}}$ uniformly and compactly into $\D$,
one can use the explicit formula for the Bergman kernel to prove that
there exists $C_{0}>0$ such that for all admissible $\i$, 
\[
\sup_{z,w\in\D}\left|\mathbf{1}\{z\notin D_{i_{1}}\}B(\gamma_{\overline{\i}}z,w)\right|\leq C_{0}\ \mathrm{and\ }\sup_{z,w\in\D}\left|\mathbf{1}\{w\notin D_{i_{1}}\}B(z,\gamma_{\overline{\i}}w)\right|\leq C_{0}.
\]
Because each kernel $\mathscr{K_{\i}}(z,w),\ \mathscr{K_{\j}^{*}}(z,w)$
is bounded on $\D\times\D$ (by the above observation), we can use
Fubini's theorem to compute the kernel of
\[
A(s)_{\i^{1}}A(s)_{\j^{1}}^{*}A(s)_{\i^{2}}A(s)_{\j^{2}}^{*}\cdots A(s)_{\i^{m}}A(s)_{\j^{m}}^{*},
\]
which is given by
\[
\mathscr{\mathscr{K}}_{\i^{1},\j^{1},\ldots,\i^{m},\j^{m}}(z,w)=
\]
\[
\int_{\D^{2m-1}}\mathscr{\mathscr{K}}_{\i^{1}}(z,w_{1})\mathscr{\mathscr{K}}_{\j^{1}}^{*}(w_{1,}w_{2})\mathscr{\mathscr{K}}_{\i^{2}}(w_{2},w_{3})\ldots\mathscr{\mathscr{K}^{*}}_{\j^{m}}(w_{2m-1,}w)dm(w_{1})\ldots dm(w_{2m-1}).
\]
 We can then check that $z\mapsto\mathscr{\mathscr{K}}_{\i^{1},\j^{1},\ldots,\i^{m},\j^{m}}(z,w)$
is holomorphic with an analytic extension to $\partial\D$. Being
a product of trace class operators, $A(s)_{\i^{1}}A(s)_{\j^{1}}^{*}A(s)_{\i^{2}}A(s)_{\j^{2}}^{*}\cdots A(s)_{\i^{m}}A(s)_{\j^{m}}^{*}$
is therefore trace class and we can use Lemma \ref{lem:trace-formula}
to obtain (again rearranging the order of integration by Fubini)
\[
\mathrm{tr}\left(A(s)_{\i^{1}}A(s)_{\j^{1}}^{*}A(s)_{\i^{2}}A(s)_{\j^{2}}^{*}\cdots A(s)_{\i^{m}}A(s)_{\j^{m}}^{*}\right)
\]
\[
=\int_{\D^{2m}}\mathscr{K}_{\i^{1}}(w_{2m},w_{1})\mathscr{\mathscr{K}}_{\j^{1}}^{*}(w_{1,}w_{2})\ldots\mathscr{\mathscr{K}}_{\i^{m}}(w_{2m-2},w_{2m-1})\mathscr{\mathscr{K}}_{\j^{m}}^{*}(w_{2m-1,}w_{2m})dm(w_{1})\ldots dm(w_{2m}).
\]
 We now recall that for all $z\in\D\setminus D_{i_{1}}$, 
\[
e^{-s\tau^{(\vert\i\vert)}(\gamma_{\overline{\i}}z)}=(\gamma_{\overline{\i}}'(z))^{s}=e^{s\log\gamma'_{\overline{\i}}(z)},
\]
where $\log(w)=\log\vert w\vert+i\arg(w)$ is the principal branch
of the complex logarithm on $\C\setminus(-\infty,0]$. Using (\ref{eq:upsilon-deriv-eq}),
this leads to the bound for $s\in\K$
\[
\left|(\gamma_{\overline{\i}}'(z))^{s}\right|\leq e^{\Re(s\log\gamma_{\overline{\i}}'(z))}\leq C_{1}(\mathcal{K})\Upsilon_{\overline{\i}}^{\Re(s)},
\]
where $C_{1}(\mathcal{K})>0$. Using the mirror estimate (\ref{eq:upsilon-mirror-eq}),
we end up with (up to a slight change of constant)
\[
\left|(\gamma_{\overline{\i}}'(z))^{s}\right|\leq C_{2}(\mathcal{K})\Upsilon_{\i}^{\Re(s)}.
\]
We have therefore if $\Re(s)=r$
\[
\left|\mathrm{tr}\left(A(s)_{\i^{1}}A(s)_{\j^{1}}^{*}A(s)_{\i^{2}}A(s)_{\j^{2}}^{*}\cdots A(s)_{\i^{m}}A(s)_{\j^{m}}^{*}\right)\right|\leq C_{0}^{2m}\left(C_{2}(\mathcal{K})\right)^{2m}\mathrm{(Vol}(\D))^{2m}\prod_{k=1}^{m}\Upsilon_{\i^{k}}^{r}\Upsilon_{\j^{k}}^{r},
\]
which concludes the proof.
\end{proof}

\section{Random permutations and random graphs\label{sec:Random-Permutations-and-graphs}}

\subsection{Symmetric random permutations}

We write $[n]\eqdf\{1,\ldots,n\}$ and we write $S_{n}$ for the symmetric
group of permutations of $[n]$. We view $d\geq2$ as a fixed integer
as in $\S\S$\ref{subsec:Boundary-coding-of}. As before, for $i\in[2d]$
let $\bar{i}\eqdf i+d\bmod2d$ so $\bar{i}\in[2d]$. We say that a
tuple in $(S_{n})^{2d}$ is \emph{symmetric }if $\sigma_{\bar{i}}=\sigma_{i}^{-1}$.
To obtain symmetric random permutations we choose $\sigma_{1},\ldots,\sigma_{d}$
independently and uniformly in $S_{n}$. (Of course, choosing these
plays the same role as in $\S\S$\ref{subsec:Random-covering-spaces,}
and hence is the same thing as choosing our random cover $X_{\phi}$.)

We let $\sigma_{\bar{i}}=\sigma_{i}^{-1}$ to extend the indices of
$\sigma_{i}$ to $[2d]$ and we write $S_{i}$ for the matrix representing
$\sigma_{i}$ as a 0-1 matrix. We also let 
\[
\underline{S}_{i}\eqdf S_{i}-\frac{1}{n}\mathbf{1}\otimes\mathbf{1},
\]
this is just the matrix of $\sigma_{i}$ acting in $V_{n}^{0}$.

\subsection{Graphs and paths}

First following \cite[Def. 6]{BordenaveCollins} we define the category
of graphs that we work with.
\begin{defn}
A colored edge is an equivalence class of element $(x,i,y)$ of $[n]\times[d]\times[n]$
with respect to the equivalence relation generated by
\begin{equation}
(x,i,y)\sim(y,\bar{i},x).\label{eq:equiv-relation}
\end{equation}
A colored graph is a pair consisting of a vertex set $V\subset[n]$
and a set of colored edges $E$ as above. It is clear that any colored
graph has underlying multigraph obtained by forgetting colors. When
we refer to paths, cycles, etc, in colored graphs, they are just paths
and cycles in the underlying multigraph.

For symmetric permutations $(\sigma_{1},\ldots,\sigma_{2d})$ in $(S_{n})^{2d}$,
$G^{\sigma}$ is the colored graph with vertex set $[n]$ and colored
edges given by $[x,i,y]$ such that $\sigma_{i}(x)=y$, modulo the
equivalence relation on colored edges. 
\end{defn}

We also introduce a way to talk about paths and the graphs they trace
out.
\begin{defn}
\label{def:paths}The set $E$ is the collection of pairs $e=(x,i)$
with $x\in[n]$ and $i\in[2d]$. Each of these be thought of as a
half-edge emanating from $x$ and labeled by $i$. A \emph{path} of
length $k$ is a sequence in $E^{k}$. For $k\in\N$ let $\gamma=(\gamma_{1},\ldots,\gamma_{k})\in E^{k}$
be a path, and write each $\gamma_{t}=(x_{t},i_{t})\in[n]\times[2d]$.
\begin{itemize}
\item Let $V_{\gamma}=\{x_{t}\,:\,1\leq t\leq k\}$ and $E_{\gamma}=\{[x_{t},i_{t},x_{t+1}]\,:\,1\leq t\leq k-1\}$.
We denote by $G_{\gamma}$ the colored graph with vertex set $V_{\gamma}$
and colored edges $E_{\gamma}$ modulo the equivalence relation (\ref{eq:equiv-relation}).
\item A path is non-backtracking if for each $1\leq t\leq k-1$ as above,
$i_{t}\neq\bar{i}_{t+1}$. We write $\Gamma^{k}$ for the subset of
of non-backtracking paths in $E^{k}$. For $e,f\in E$ we write $\Gamma_{ef}^{k}\subset\Gamma^{k}$
for the non-backtracking paths with $\gamma_{1}=e$, $\gamma_{k}=f$.
\end{itemize}
\end{defn}

\begin{rem}
Note that in Definition \ref{def:paths} the paths, etc have nothing
a priori to do with $G^{\sigma}$. Rather, their role is as objects
that could \emph{potentially} appear in $G^{\sigma}$.
\end{rem}

\begin{defn}
\label{def:color-sequence}Given a non-backtracking path $\gamma=((x_{1},i_{1}),\ldots,(x_{k},i_{k}))\in\Gamma^{k}$,
the \emph{color sequence }of the path is 
\[
\i(\gamma)\eqdf(i_{1},\ldots,i_{k}).
\]
\end{defn}

\subsection{Tangles}

One important notion in the works of Bordenave-Collins \cite{BordenaveCollins},
originating in Friedman \cite{Friedman}, is \emph{tangles. }These
relate to events of small probability that make large, argument-destroying,
contributions to the expected value of traces. The following definition
is \cite[Def. 19]{BordenaveCollins}
\begin{defn}
\label{def:tangles}Let $H$ be a colored graph. $H$ is \emph{tangle-free}
if it contains at most one cycle. For any vertex $x$ of $H$, let
$(H,x)_{\ell}$ denote the subgraph of $H$ containing all colored
edges and vertices that belong to a path of length at most $\ell$
beginning at $x$. $H$ is $\ell$-tangle-free if for any vertex $x$,
$(H,x)_{\ell}$ is tangle-free, and $H$ is $\ell$-tangled otherwise.
We say that a path $\gamma\in E^{k}$ is tangle-free if $G_{\gamma}$
is tangle-free. We write $F^{k}$ (resp. $F_{ef}^{k}$) for the collection
of tangle-free non-backtracking paths in $\Gamma^{k}$ (resp. $\Gamma_{ef}^{k}$).
\end{defn}

The following lemma appearing in \cite[Lemma 23]{BordenaveCollins}
gives the probability that $G^{\sigma}$ is $\ell$-tangled when $(\sigma_{i})$
are symmetric random permutations in $(S_{n})^{2d}$.
\begin{lem}
\label{lem:tangle-probability}There is a constant $c>0$ such that
for $(\sigma_{i})$ symmetric random permutations in $(S_{n})^{2d}$
and $1\leq\ell\leq\sqrt{n}$, the probability that $G^{\sigma}$ is
$\ell$-tangled is at most
\[
c\frac{\ell^{3}(2d-1)^{4\ell}}{n}.
\]
\end{lem}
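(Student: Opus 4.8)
The plan is to bound the probability that $G^\sigma$ is $\ell$-tangled by a union bound over all possible "witnesses" to a tangle, estimating the probability that each such witness actually occurs in $G^\sigma$. Recall that $G^\sigma$ being $\ell$-tangled means there is some vertex $x$ such that $(G^\sigma,x)_\ell$ contains at least two cycles. A minimal such subgraph is a connected colored graph $H$ that is a union of paths of length $\le\ell$ from $x$, has at least two independent cycles, and (by minimality) has number of edges at least number of vertices plus one, i.e. $|E_H|\ge |V_H|+1$; moreover since it is built from paths of length at most $\ell$ out of a single vertex, one can take $|E_H|\le 2\ell$ and $|V_H|\le 2\ell$. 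So the first step is the combinatorial reduction: $G^\sigma$ is $\ell$-tangled iff it contains (as a colored subgraph) some such $H$ with $|V_H|=v$, $|E_H|=e$, $v\le e\le \min(2\ell, v+ O(\ell))$ — really the key inequality is $e\ge v+1$ together with $e\le 2\ell$.

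Next I would count the candidate subgraphs and estimate occurrence probabilities. Fix $v$ and $e$ with $v+1\le e$. The number of isomorphism types of such colored graphs, together with a choice of which of the $[n]$ labels the $v$ vertices carry and which of the $2d$ colors each edge carries, is at most $n^v (2d-1)^{O(\ell)}\cdot(\text{poly in }\ell)$ — the $(2d-1)^{O(\ell)}$ because the structure is traced out by $O(\ell)$ non-backtracking steps each with at most $2d-1$ color choices, and one extra factor of $\ell^{O(1)}$ to account for the choices of how the $O(1)$ cycle-closing edges attach. Then, for a \emph{fixed} colored graph $H$ with $v$ vertices and $e$ edges, the probability that $H$ appears in $G^\sigma$ for symmetric uniform random $\sigma_1,\dots,\sigma_d\in S_n$ is at most $\prod (\text{something like }1/(n-O(\ell)))$ over the $e$ edges: each new colored edge of a given color $i$ either creates a fresh constraint $\sigma_i(x)=y$ on previously-unconstrained values — probability $\le 1/(n-2\ell)\le 2/n$ for $\ell\le\sqrt n$ — or is forced. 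Since $H$ is connected with $e$ edges and $v$ vertices, and is built by adding edges one at a time starting from a spanning tree, exactly $v-1$ edges can be "free/tree" edges determining new vertices and the remaining $e-(v-1)\ge 2$ edges impose genuine independent coincidences; carefully, the occurrence probability is at most $\big(C/n\big)^{e}\cdot n^{\#\{\text{tree edges}\}}$-type bookkeeping giving $\le (C/n)^{e - (v-1)} \cdot n^{-(v-1)}\cdot n^{v-1}$, i.e. $\le (C/n)^{e-v+1}$ after multiplying in the $n^v$ label choices. The main point: we gain a factor $n^{-(e-v+1)}\le n^{-1}$ from the surplus edges.

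Combining: the probability of being $\ell$-tangled is at most
\[
\sum_{v\le 2\ell}\ \sum_{e\ge v+1}\ n^{v}\,(2d-1)^{O(\ell)}\,\ell^{O(1)}\cdot \Big(\frac{C}{n}\Big)^{e}
\ \le\ \sum_{v,e}(2d-1)^{O(\ell)}\ell^{O(1)}\,C^{e}\, n^{\,v-e}
\ \le\ \frac{\ell^{O(1)}(2d-1)^{O(\ell)}}{n},
\]
using $v-e\le -1$, $e\le 2\ell$, $v\le 2\ell$, and summing the geometrically-controlled finite ranges of $v,e$; the constant in the exponent of $(2d-1)$ can be pinned to $4$ by being precise that a tangle in $(G^\sigma,x)_\ell$ is traced by two paths of length $\le\ell$ from $x$, i.e. $\le 2\ell$ non-backtracking steps, but the two cycles together need $\le 4\ell$ steps of choices in the worst accounting, and the $\ell^3$ (rather than a larger power) comes from: one factor $\ell$ for the choice of the root-type vertex data and two more for the positions along the two paths where the extra cycle-closing edges land.

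The step I expect to be the main obstacle is the \textbf{probability estimate for a fixed colored subgraph} $H$ under \emph{symmetric} random permutations. Because $\sigma_{\bar i}=\sigma_i^{-1}$, the edges of color $i$ and color $\bar i$ are not independent, so when bounding $\Pr[H\subset G^\sigma]$ one must group the edges by the unordered color pair $\{i,\bar i\}$ and, within each group, expose the constraints on the single permutation $\sigma_i$ in an order that keeps each newly revealed value uniform over a set of size $\ge n-2\ell$; one also has to verify that the $e-v+1$ surplus edges really do force independent coincidences rather than being automatically consistent (this uses that $H$ is a genuine subgraph of the functional graph and that we chose $\ell\le\sqrt n$ so error terms $O(\ell^2/n)$ stay negligible). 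This bookkeeping is exactly the content of \cite[Lemma 23]{BordenaveCollins}, so in the write-up I would either cite it directly or reproduce the short argument; no genuinely new idea beyond careful union-bounding is needed.
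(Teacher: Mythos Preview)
The paper does not prove this lemma at all: it is simply quoted verbatim from \cite[Lemma~23]{BordenaveCollins} with no argument given. Your sketch is the standard union-bound argument that underlies that reference (witness a tangle by a minimal connected colored subgraph with $e\ge v+1$ and $e=O(\ell)$, count $\lesssim n^{v}(2d-1)^{O(\ell)}\ell^{O(1)}$ labeled candidates, and use that a fixed labeled candidate occurs with probability $\lesssim (C/n)^{e}$, giving $n^{v-e}\le n^{-1}$), and your own conclusion that one should ``cite it directly'' is exactly what the paper does. A couple of places in your sketch are loose---you first claim $|E_H|\le 2\ell$ and then correct to $4\ell$, and the intermediate bookkeeping mixing tree/surplus edges with the $n^{v}$ label count is garbled---but the final displayed inequality and the identification of the genuine technical point (handling the dependence $\sigma_{\bar i}=\sigma_i^{-1}$ when exposing constraints) are correct, so there is no substantive gap.
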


\section{Non-backtracking operator and path decomposition}

\subsection{Conjugation between transfer operator and non-backtracking operator\label{subsec:Conjugation-between-transfer}}

The purpose of this section is to show how the transfer operator,
after a unitary conjugation, is roughly of the form as the non-backtracking
operator of \cite{BordenaveCollins}. This conjugation is not essential
for the argument and merely makes it easier to read the current work
alongside \cite{BordenaveCollins}.

Recall from $\S\S$\ref{subsec:Transfer-operator-and} that our transfer
operator

\[
\L_{s,\rho_{n}}[f](x)\eqdf\sum_{i\in\A}e^{-s\tau(\gamma_{i}(x))}\rho_{n}(\gamma_{i})^{-1}f(\gamma_{i}(x))
\]
acts on $\H(\D,V_{n})$ and the restriction of this operator to $\H(\D,V_{n}^{0})$
coincides with $\L_{s,\rho_{n}^{0}}$; we aim to prove for certain
values of $s$ that $\L_{s,\rho_{n}^{0}}$ has no eigenvalue 1 (cf.
Proposition \ref{prop:new-resonances-and-evalue-1}).

Let
\begin{align*}
K_{0} & \eqdf\H(\D)\otimes V_{n}^{0}\subset\H(\D)\otimes\ell^{2}([n]).
\end{align*}
We let $S_{i}$ denote the 0-1 matrix of $\sigma_{i}=\phi(\gamma_{i})$
as in $\S\S$\ref{subsec:Random-covering-spaces,} (recall also that
$\rho_{n}$ and $\phi$ are coupled). For each element $e=(x,i)\in E$
we consider the subspace
\[
\H_{e}\eqdf\H(D_{\bar{i}})\otimes\C x\subset\H(\D)\otimes\ell^{2}([n]).
\]
(The reason $\bar{i}$ appears here is to make things line up with
Bordenave-Collins machinery). In $\ell^{2}([n])$, we write $y$ for
the indicator function of $y$ (this corresponds to identification
of $\ell^{2}([n])$ with formal complex linear combinations of $[n]$).
This gives 
\[
\H(\D)\otimes\ell^{2}([n])=\bigoplus_{e\in E}\H_{e}.
\]
Write $\p_{e}$ for the orthogonal projection onto $\H_{e}$. For
any endomorphism $M$ of $\H(\D)\otimes\ell^{2}([n])$ and for each
$e,f\in E$ we write 
\[
M_{ef}\eqdf\p_{e}M\p_{f}.
\]
 Clearly $M$ is determined by the values $M_{ef}$. Let $\EE_{xy}:\ell^{2}([n])\to\ell^{2}([n])$
be the map with
\[
(\EE_{xy})(y')=\delta_{yy'}x
\]
and $\delta_{yy'}$ is the Kronecker delta. 

Recall the definition of operators $A_{j}(s)$ from (\ref{eq:A_i-def}).
For $e=(x,i)$ and $f=(y,j)$ we define
\begin{equation}
B(s)_{ef}\eqdf\mathbf{1}\{i\neq\bar{j}\}(S_{i})_{xy}\p_{e}[A_{j}(s)\otimes\mathcal{E}_{xy}]\p_{f};\label{eq:B-formula-exact}
\end{equation}
this defines $B(s)\in\End\left(\H(\D)\otimes\ell^{2}([n])\right)$.
The operator $B(s)$ is in a similar form to the operator $B$ defined
by Bordenave and Collins in \cite[(12)]{BordenaveCollins} (cf. also
the display equation following \cite[(12)]{BordenaveCollins}). However
our $B(s)$ and Bordenave-Collins' $B$ differ in the following ways: 
\begin{itemize}
\item our $\H(\D)$ is the analog of $\C^{r}\otimes\ell^{2}([2d])$, however,
$\H(\D)$ does not split as a tensor product in this way. 
\item Also, our $A_{j}(s)$ is slightly more general (in the Bordenave-Collins
setting, it is equivalent to considering $a_{j}\in\End(\C^{r}\otimes\ell^{2}([2d]))$
rather than $a_{j}\in\End(\C^{r})$).
\item each $A_{j}(s)$ acts on a infinite dimensional Hilbert space $\H(\D)$.
\item \textbf{most critically}, the symmetry condition $A_{i}=A_{\bar{i}}^{*}$
that Bordenave-Collins assume does not hold here.
\end{itemize}
These differences, and \emph{especially the lack of the symmetry condition},
have to be worked around in the sequel.
\begin{prop}
\label{prop:conjugation}There is a unitary operator $U:\H(\D,V_{n})\to\H(\D)\otimes\ell^{2}([n])$
such that
\begin{enumerate}
\item $U$ conjugates the holomorphic family of trace class operators $\L_{s,\rho_{n}}$
to the family of operators $B(s)$.
\item $U$ conjugates the holomorphic family of trace class operators $\L_{s,\rho_{n}^{0}}$
to the family of operators $B(s)\lvert_{K_{0}}$.
\end{enumerate}
As such, $B(s)$ and $B(s)\lvert_{K_{0}}$ are holomorphic families
of trace class operators and $\|\L_{s,\rho_{n}^{0}}^{\ell}\|=\|B(s)^{\ell}\lvert_{K_{0}}\|$
for all $s\in\C$ and $\ell\in\N.$
\end{prop}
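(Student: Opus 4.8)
The plan is to produce $U$ as the canonical identification $\H(\D,V_n)\cong\H(\D)\otimes\ell^2([n])$ (a $V_n$-valued holomorphic function \emph{is} an element of $\H(\D)\otimes V_n$, and the two inner products agree) composed with a block-diagonal unitary that twists by the permutations. Via the splitting $\H(\D)=\bigoplus_{i\in\mathcal I}\H(D_i)$, both Hilbert spaces become $\bigoplus_{i\in\mathcal I}\H(D_i)\otimes V_n$; on the right-hand side, grouping the half-edges $e=(x,i)$ by their color gives $\bigoplus_x\H_{(x,i)}=\H(D_{\bar i})\otimes V_n$, so $\bigoplus_{e\in E}\H_e$ is the same direct sum up to the involution $i\mapsto\bar i$. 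Under these identifications I take
\[
U=\bigoplus_{l\in\mathcal I}\big(\mathrm{id}_{\H(D_l)}\otimes\rho_n(\gamma_l^{-1})\big).
\]
Being a direct sum of unitaries (as $\rho_n$ is a unitary representation) and independent of $s$, $U$ is unitary and will not interfere with any analytic assertion.

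The conjugation is then a short computation once two block descriptions are unwound. Feeding $h\otimes w$ with $h\in\H(D_l)$, $w\in V_n$, into the definition of $\L_{s,\rho_n}$, using $(\gamma_i'(x))^s=e^{-s\tau(\gamma_i(x))}$ and the structure of the operators $A_j(s)$ of~(\ref{eq:A_i-def}) — which, by the ping-pong geometry of the discs, depend only on $f\lvert_{D_{\bar j}}$ and take values in $\bigoplus_{k\neq j}\H(D_k)$ — one reads off
\[
\L_{s,\rho_n}(h\otimes w)=A_{\bar l}(s)h\ \otimes\ \rho_n(\gamma_l^{-1})w\qquad(h\in\H(D_l)),
\]
i.e.\ $\L_{s,\rho_n}=\sum_{m\in\mathcal I}A_m(s)\otimes\rho_n(\gamma_m)$. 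Doing the same bookkeeping with the definition~(\ref{eq:B-formula-exact}) of $B(s)$ — collapsing the vertex sums through $\sum_{x,y}(S_i)_{xy}\,\mathcal E_{xy}=S_i$, noting that the operator $S_i$ on $V_n$ is precisely $\rho_n(\gamma_i)$, and using that $A_j(s)$ kills everything except the $\H(D_{\bar j})$-component — one gets $B(s)=\sum_{i\in\mathcal I}\big(\Pi_{\bar i}\,\L_{s,\triv}\big)\otimes\rho_n(\gamma_i)$, where $\L_{s,\triv}=\sum_m A_m(s)$ is the scalar transfer operator on $\H(\D)$ and $\Pi_{\bar i}\colon\H(\D)\to\H(D_{\bar i})$ is the coordinate projection. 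Now $U\L_{s,\rho_n}U^{-1}=B(s)$ follows term by term: on $h\otimes w$ with $h\in\H(D_l)$, the factor $\rho_n(\gamma_l)$ introduced by $U^{-1}$ cancels the $\rho_n(\gamma_l^{-1})$ coming out of $\L_{s,\rho_n}$, leaving $A_{\bar l}(s)h\otimes w=\sum_{k\neq\bar l}(\Pi_k A_{\bar l}(s)h)\otimes w$, and then $U$ re-twists the $\H(D_k)$-component by $\rho_n(\gamma_k^{-1})$; the result, $\sum_{k\neq\bar l}(\Pi_k A_{\bar l}(s)h)\otimes\rho_n(\gamma_k^{-1})w$, is exactly $B(s)(h\otimes w)$ read off from the preceding formula. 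This proves part~(1).

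For part~(2): a permutation operator $\rho_n(\gamma)$ is unitary and fixes constants, hence preserves $V_n^0$; so $\L_{s,\rho_n}$ preserves $\H(\D,V_n^0)$ (the restriction being $\L_{s,\rho_n^0}$), every block of $U$ preserves $V_n^0$, and $U$ carries $\H(\D,V_n^0)=\bigoplus_l\H(D_l)\otimes V_n^0$ onto $K_0=\H(\D)\otimes V_n^0$; therefore $U$ conjugates $\L_{s,\rho_n^0}$ to $B(s)\lvert_{K_0}$. The ``As such'' statements are then automatic: $\L_{s,\rho_n}$ is trace class by the Lemma (applied with $V=V_n$) and depends holomorphically on $s$ because all its $s$-dependence is through the entire factors $e^{-s\tau(\cdot)}$; conjugation by the fixed unitary $U$ and restriction to the invariant subspace $K_0$ preserve holomorphy and the trace-class property, so $B(s)$ and $B(s)\lvert_{K_0}=U\L_{s,\rho_n^0}U^{-1}$ are holomorphic families of trace class operators. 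Finally, since $U$ is unitary, $K_0$ is $B(s)$-invariant, and conjugation commutes with taking powers,
\[
\|\L_{s,\rho_n^0}^\ell\|=\|(U\L_{s,\rho_n^0}U^{-1})^\ell\|=\|(B(s)\lvert_{K_0})^\ell\|=\|B(s)^\ell\lvert_{K_0}\|.
\]

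The one genuinely delicate point is keeping the index conventions mutually consistent — the involution $i\mapsto\bar i$, the precise relation between the 0-1 matrix $S_i$ and the permutation operator $\rho_n(\gamma_i)$, and the fact that $A_j(s)$ is built from $\gamma_{\bar j}$ rather than $\gamma_j$ — both when deriving the two block formulas and when matching them after conjugation. I expect this to absorb essentially all of the effort, but it is entirely routine and purely algebraic: the operators $A_j(s)$ enter only abstractly, and the Bergman-kernel estimates are needed solely for the trace-class statement invoked above.
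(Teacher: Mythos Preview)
Your proof is correct and follows essentially the same approach as the paper: the unitary you define, $U=\bigoplus_{l}\big(\mathrm{id}_{\H(D_l)}\otimes\rho_n(\gamma_l^{-1})\big)$, is exactly the paper's block-diagonal twist $Q^{-1}$ (composed with the natural identification $\H(\D,V_n)\cong\H(\D)\otimes\ell^2([n])$), and your verification of $U\L_{s,\rho_n}U^{-1}=B(s)$ via the intermediate formulas $\L_{s,\rho_n}=\sum_m A_m(s)\otimes\rho_n(\gamma_m)$ and $B(s)=\sum_i(\Pi_{\bar i}\L_{s,\triv})\otimes\rho_n(\gamma_i)$ amounts to the same matrix-entry computation the paper performs directly. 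The only stylistic difference is that you first derive global tensor expressions for both operators and then match, whereas the paper computes $[Q^{-1}b(s)Q]_{ef}$ on a basis vector and identifies it with $B(s)_{ef}$; both routes rest on the single observation that $A_j(s)$ only sees the $\H(D_{\bar j})$-component of its input.
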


\begin{proof}
First of all, by the obvious identification
\[
\H(\D,V_{n})\cong\H(\D)\otimes\ell^{2}([n])
\]
the operator $\L_{s,\rho_{n}}$ is conjugated to $b(s)\in\End(\H(\D)\otimes\ell^{2}([n]))$,
\[
b(s)\eqdf\sum_{i}A_{i}(s)\otimes\sigma_{i}.
\]
Consider the unitary operator $Q\in\End(\H(\D)\otimes\ell^{2}([n]))$
defined for $e=(x,i)$ and $f=(y,j)$ by 
\[
Q_{ef}\eqdf\p_{e}\left(\mathrm{Id}_{\H(D_{\bar{j}})}\otimes\sigma_{j}^{-1}\right)\p_{f}.
\]
Then for $F\otimes y\in\H_{f}$, $f=(y,j)$, $F\in\H(D_{\bar{j}})$,
using $A_{i}(s)F=0$ unless $i=j$ we obtain
\begin{align*}
[Q^{-1}b(s)Q]_{ef}[F\otimes y] & =\p_{e}Qb(s)[F\otimes\sigma_{j}^{-1}(y)]\\
 & =\p_{e}Q[\left(A_{j}(s)F\right)\otimes y]\\
 & =\p_{e}\left(A_{j}(s)F\otimes\sigma_{i}^{-1}(y)\right)\\
 & =\mathbf{1}\{i\neq\bar{j}\}\mathbf{1}\{\sigma_{i}(x)=y\}\p_{e}\left(A_{j}(s)F\otimes x\right)\\
 & =B(s)_{ef}[F\otimes y]
\end{align*}
proving that $Q^{-1}b(s)Q=B(s)$. It is also not hard to see that
$B(s)$ leaves $K_{0}$ invariant and $Q$ also conjugates the restriction
$B(s)\lvert_{K_{0}}$ to $\L_{s,\rho_{n}^{0}}$. Finally, noting that
the conjugation between $\L_{s,\rho_{n}}$ and $B(s)$ does not depend
on $s$ completes the proof.
\end{proof}

\subsection{Bordenave's path decomposition}

Here we perform a path decomposition method that originates in work
of Bordenave \cite{bordenave2015new}. Recall from Definition \ref{def:tangles}
that $F^{\ell+1}$ is the collection of tangle-free paths in $\Gamma^{\ell+1}$.
For $1\leq k\leq\ell$ we write $F_{k}^{\ell+1}$ for the collection
of paths $\gamma$ in $\Gamma^{\ell+1}$ such that the first $k$
half-edges of $\gamma$ (in $\Gamma^{k})$ form a tangle-free path
and the last $\ell-k+1$ half-edges $($in $\Gamma^{\ell-k+1})$ also
form a tangle-free path.

Recall the definition of $\i(\gamma)$ from Definition \ref{def:color-sequence}
and the notation $\hat{\i}=(i_{2},\ldots,i_{n})$ from $\S\S$\ref{subsec:Boundary-coding-of}.
As in \cite[pg. 834]{BordenaveCollins} we obtain with $e=(x,i)$
and $f=(y,j)$
\begin{equation}
B(s)_{ef}^{\ell}=\p_{e}\left(\sum_{\gamma\in\Gamma_{ef}^{\ell+1}}(A(s)_{\hat{\i}(\gamma)}\otimes\EE_{xy})\prod_{t=1}^{\ell}(S_{i_{t}})_{x_{t}x_{t+1}}\right)\p_{f}\label{eq:Bl-expression}
\end{equation}

For $\ell\in\N$ we define as in \cite[(32), (34)]{BordenaveCollins}
\begin{align*}
\uB(s)^{(\ell)} & \eqdf\sum_{\gamma=((x_{1},i_{1}),\ldots,(x_{\ell+1},i_{\ell+1}))\in F^{\ell+1}}\p_{(x_{1},i_{1})}[A(s)_{\hat{\i}(\gamma)}\otimes\mathcal{E}_{x_{1}x_{\ell+1}}]\p_{(x_{\ell+1},i_{\ell+1})}\left(\prod_{t=1}^{\ell}(\underline{S}_{i_{t}})_{x_{t}x_{t+1}}\right),\\
R_{k}(s)^{(\ell)} & \eqdf\sum_{\gamma=((x_{1},i_{1}),\ldots,(x_{\ell+1},i_{\ell+1}))\in F_{k}^{\ell+1}\backslash F^{\ell+1}}\\
 & \,\,\,\,\,\,\,\,\p_{(x_{1},i_{1})}[A(s)_{\hat{\i}(\gamma)}\otimes\mathcal{E}_{x_{1}x_{\ell+1}}]\p_{(x_{\ell+1},i_{\ell+1})}\left(\prod_{t=1}^{k-1}(\underline{S}_{i_{t}})_{x_{t}x_{t+1}}\right)\left(\prod_{t=k+1}^{\ell}(S_{i_{t}})_{x_{t}x_{t+1}}\right).
\end{align*}
 
\begin{rem}
\label{rem:About-trace-class}Since each $A(s)_{i}$ is trace class
(see $\S\S$\ref{subsec:Deterministic-a-priori-bounds}), both $\uB(s)^{(\ell)}$
and $R_{k}(s)^{(\ell)}$ are trace class operators for any $\ell\in\N$.
\end{rem}

The following is proved by Bordenave and Collins in the course of
proving \cite[Lemma 20]{BordenaveCollins} (see the last display equation
before \cite[Lemma 20]{BordenaveCollins}).
\begin{lem}
\label{lem:Bell-to-B(ell)-and-Rkl}Let $\ell\geq1$ be an integer
and suppose $G^{\sigma}$ is $\ell$-tangle free. Then for all $F\in K_{0}$,
\begin{equation}
B(s)^{\ell}F=\uB(s)^{(\ell)}-\frac{1}{n}R_{k}(s)^{(\ell)}F.\label{eq:b-bell-rk}
\end{equation}
Hence
\[
\|B(s)^{\ell}\lvert_{K_{0}}\|\leq\|\uB(s)^{(\ell)}\|+\frac{1}{n}\sum_{k=1}^{\ell}\|R_{k}(s)^{(\ell)}\|.
\]
\end{lem}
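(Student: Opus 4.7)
The plan is to follow the path-decomposition argument of Bordenave-Collins in the proof of \cite[Lemma 20]{BordenaveCollins}, adapted to our setting. I begin from identity (\ref{eq:Bl-expression}), which expresses $B(s)^{\ell}_{ef}$ as a sum over $\gamma\in\Gamma^{\ell+1}_{ef}$ weighted by $\prod_{t=1}^{\ell}(S_{i_t})_{x_tx_{t+1}}$. Under the $\ell$-tangle-free hypothesis, any $\gamma$ whose weight is nonzero realizes all its edges inside the tangle-free neighborhood $(G^{\sigma},x_1)_\ell$, so $G_\gamma$ is tangle-free and the summation effectively restricts to $\gamma\in F^{\ell+1}_{ef}$.

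Next, I substitute $S_i = \underline{S}_i + \frac{1}{n}J$ where $J$ is the all-ones matrix on $\ell^{2}([n])$, and telescope in $t$:
\begin{equation*}
\prod_{t=1}^{\ell}(S_{i_t})_{x_tx_{t+1}} = \prod_{t=1}^{\ell}(\underline{S}_{i_t})_{x_tx_{t+1}} + \sum_{k=1}^{\ell}\frac{1}{n}\prod_{t<k}(\underline{S}_{i_t})_{x_tx_{t+1}}\prod_{t>k}(S_{i_t})_{x_tx_{t+1}}.
\end{equation*}
Substituting this back into the restricted sum yields $B(s)^{\ell}F = \uB(s)^{(\ell)}F + \sum_{k=1}^{\ell}\frac{1}{n}T_{k}(s)F$, where $T_{k}(s)$ is defined identically to $R_{k}(s)^{(\ell)}$ but summed over $\gamma\in F^{\ell+1}$ in place of $F_{k}^{\ell+1}\setminus F^{\ell+1}$.

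Since $F^{\ell+1}\subset F_{k}^{\ell+1}$, matching the identity claimed in the lemma reduces to proving that for $F\in K_{0}$,
\begin{equation*}
\sum_{\gamma\in F_{k}^{\ell+1}}[\mathrm{coeff}(\gamma)]\prod_{t<k}(\underline{S}_{i_t})_{x_tx_{t+1}}\prod_{t>k}(S_{i_t})_{x_tx_{t+1}}\,F = 0,
\end{equation*}
which would give $T_{k}(s)F=-R_{k}(s)^{(\ell)}F$ on $K_{0}$. This $K_{0}$-vanishing is the combinatorial crux of the argument, where I expect the main obstacle to lie. The observation is that paths in $F_{k}^{\ell+1}$ factor into two tangle-free segments joined only by the non-backtracking condition $i_k\neq\bar{i}_{k+1}$; with no constraint between $x_k$ and $x_{k+1}$, the free summation at position $k$ behaves like the insertion of $\frac{1}{n}J = \mathrm{Id}_{\H(\D)}\otimes\Pi_{\mathbf{1}}$ (the projection onto constants in $\ell^{2}([n])$) between the two segments' operators. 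The right-segment operator coincides, under the tangle-free hypothesis and since $\ell-k<\ell$, with $B(s)^{\ell-k}$ composed with the projection onto half-edges $(x_{k+1},i_{k+1})$ satisfying $i_{k+1}\neq\bar{i}_k$; both $B(s)$ and this projection preserve $K_{0}$, so the right-segment output on $F\in K_{0}$ lies in $\H(\D)\otimes V_{n}^{0}$, on which $\Pi_{\mathbf{1}}$ vanishes. The operator-norm bound then follows by the triangle inequality. The structural differences from Bordenave-Collins' setting (infinite-dimensional $A_{i}(s)$; lack of the symmetry $A_{i}(s) = A_{\bar i}(s)^{*}$) do not enter at this step, which depends only on the permutation structure encoded in the $S_{i}$.
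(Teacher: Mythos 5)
Your proposal is correct and follows essentially the same route as the paper: restrict the sum in (\ref{eq:Bl-expression}) to tangle-free paths using the $\ell$-tangle-free hypothesis, telescope $S_i = \underline{S}_i + \frac{1}{n}(\text{all-ones})$ to split off $\uB(s)^{(\ell)}$ and the $R_k$ terms, and show the remaining sum over $F_k^{\ell+1}$ vanishes on $K_0$ by factoring through the projection onto constants at position $k$ composed with operators preserving $K_0$. The paper packages the "insertion of $\Pi_{\mathbf{1}}$" together with the adjacent $A_j(s)$ factor and non-backtracking constraint into a single auxiliary operator $J$ (their notation, not your all-ones matrix) and writes the factorization as $M = \tilde{M}JB(s)^{\ell-k}$, but the mechanism is identical to yours.
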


\begin{proof}
It clearly suffices to prove the first statement. This is similar
to \cite[pgs. 836-837]{BordenaveCollins} but there are subtle differences
so we give the details for completeness. Let $e=(x,i)$ and $f=(y,j)$
in $E$. Beginning with the expression (\ref{eq:Bl-expression}) for
$B(s)_{ef}^{\ell}$, \emph{on the event that $G^{\sigma}$ is} $\ell$-\emph{tangle
free} we can write 
\begin{align}
B(s)_{ef}^{\ell}= & \p_{e}\left(\sum_{\gamma\in\Gamma_{ef}^{\ell+1}}(A(s)_{\hat{\i}(\gamma)}\otimes\EE_{xy})\prod_{t=1}^{\ell}(S_{i_{t}})_{x_{t}x_{t+1}}\right)\p_{f}\nonumber \\
= & \p_{e}\left(\sum_{\gamma\in F_{ef}^{\ell+1}}(A(s)_{\hat{\i}(\gamma)}\otimes\EE_{xy})\prod_{t=1}^{\ell}(\underline{S}_{i_{t}})_{x_{t}x_{t+1}}\right)\p_{f}\label{eq:bbarline}\\
 & +\frac{1}{n}\sum_{k=1}^{\ell}\p_{e}\left(\sum_{\gamma\in F_{ef}^{\ell+1}}(A(s)_{\hat{\i}(\gamma)}\otimes\EE_{xy})\prod_{t=1}^{k-1}(\underline{S}_{i_{t}})_{x_{t}x_{t+1}}\prod_{t=k+1}^{\ell}(S_{i_{t}})_{x_{t}x_{t+1}}\right)\p_{f},\label{eq:rkline}
\end{align}
where the last equality used 
\[
\prod_{t=1}^{\ell}(S_{i_{t}})_{x_{t}x_{t+1}}=\prod_{t=1}^{\ell}(\underline{S}_{i_{t}})_{x_{t}x_{t+1}}+\frac{1}{n}\sum_{k=1}^{\ell}\prod_{t=1}^{k-1}(\underline{S}_{i_{t}})_{x_{t}x_{t+1}}\prod_{t=k+1}^{\ell}(S_{i_{t}})_{x_{t}x_{t+1}}.
\]
 The operator on line (\ref{eq:bbarline}) is $\uB(s)_{ef}^{(\ell)}$
so it suffices to describe the operator on line (\ref{eq:rkline}).
We write 
\begin{align}
 & \p_{e}\left(\sum_{\gamma\in F_{ef}^{\ell+1}}(A(s)_{\hat{\i}(\gamma)}\otimes\EE_{xy})\prod_{t=1}^{k-1}(\underline{S}_{i_{t}})_{x_{t}x_{t+1}}\prod_{t=k+1}^{\ell}(S_{i_{t}})_{x_{t}x_{t+1}}\right)\p_{f}\nonumber \\
= & \p_{e}\left(\sum_{\gamma\in F_{k,ef}^{\ell+1}}(A(s)_{\hat{\i}(\gamma)}\otimes\EE_{xy})\prod_{t=1}^{k-1}(\underline{S}_{i_{t}})_{x_{t}x_{t+1}}\prod_{t=k+1}^{\ell}(S_{i_{t}})_{x_{t}x_{t+1}}\right)\p_{f}\label{eq:Me}\\
 & -\p_{e}\left(\sum_{\gamma\in F_{k,ef}^{\ell+1}\backslash F_{ef}^{\ell+1}}(A(s)_{\hat{\i}(\gamma)}\otimes\EE_{xy})\prod_{t=1}^{k-1}(\underline{S}_{i_{t}})_{x_{t}x_{t+1}}\prod_{t=k+1}^{\ell}(S_{i_{t}})_{x_{t}x_{t+1}}\right)\p_{e}.\label{eq:R_k-contribs}
\end{align}

The terms in line (\ref{eq:R_k-contribs}) give the required contributions
to $-\frac{1}{n}R_{k}(s)^{(\ell)}$ in (\ref{eq:b-bell-rk}) so it
suffices to show that the operator $M$ defined by 
\[
M_{ef}\eqdf\eqref{eq:Me}
\]
kills every element of $K_{0}$.

Let $J:\H(\D)\otimes\ell^{2}([n])\to\H(\D)\otimes\ell^{2}([n])$ denote
the operator defined for $e=(x,i),\,f=(y,j)$
\[
J_{ef}=\mathbf{1}\{i\neq\bar{j}\}\p_{e}[A_{j}(s)\otimes\mathcal{E}_{xy}]\p_{f}
\]
(contrast this to (\ref{eq:B-formula-exact})). Note that if $F=F_{0}\otimes v\in K_{0}$,
with $F_{0}\in\H(D_{\bar{j}})$ and $v=(v_{1},\ldots,v_{n})\in V_{n}^{0}$,
$\sum_{y\in[n]}v_{y}=0,$ then
\begin{align*}
J[F_{0}\otimes v] & =\sum_{\substack{e=(x,i)\\
i\neq\bar{j}
}
}\sum_{y\in[n]}\p_{e}[A_{j}(s)\otimes\mathcal{E}_{xy}]\p_{(y,j)}\left(F_{0}\otimes v\right)\\
 & =\sum_{\substack{e=(x,i)\\
i\neq\bar{j}
}
}\sum_{y\in[n]}v_{y}\p_{e}[A_{j}(s)\otimes\mathcal{E}_{xy}]\left(F_{0}\otimes y\right)\\
 & =\sum_{\substack{e=(x,i)\\
i\neq\bar{j}
}
}[\left(A_{j}(s)F_{0}\right)\lvert_{D_{\bar{i}}}\otimes x]\sum_{y\in[n]}v_{y}=0.
\end{align*}
This implies $K_{0}\subset\ker J$.

Since we still assume $G^{\sigma}$ is $\ell$-tangle-free, and hence
$k$-tangle free for all $k\in[\ell]$, we have
\begin{align*}
M_{ef} & =\p_{e}\left(\sum_{\gamma\in F_{k,ef}^{\ell+1}}(A(s)_{\hat{\i}(\gamma)}\otimes\EE_{xy})\prod_{t=1}^{k-1}(\underline{S}_{i_{t}})_{x_{t}x_{t+1}}\prod_{t=k+1}^{\ell}(S_{i_{t}})_{x_{t}x_{t+1}}\right)\p_{f}\\
 & =(\tilde{M}JB(s)^{\ell-k})_{ef}
\end{align*}
 for some operator $\tilde{M}$. Since $B(s)$ preserves $K_{0}$,
this implies $K_{0}$ is in the kernel of $M$ as required.
\end{proof}
Lemma \ref{lem:Bell-to-B(ell)-and-Rkl} tells us that bounds on both
$\|\uB(s)^{(\ell)}\|$ and $\|R_{k}(s)^{(\ell)}\|$ that hold with
high probability can be coupled with the fact that $G^{\sigma}$ is
$\ell$-tangle-free with high-probability (for $\ell$ in a suitable
range) to establish bounds for $\|B(s)^{\ell}\lvert_{K_{0}}\|$ that
hold with high probability. A high probability bound for $\|\uB(s)^{(\ell)}\|$
is the subject of $\S\S$\ref{sec:Norm-of-B(ell)} and high probability
bounds for $\|R_{k}(s)^{(\ell)}\|$ are the subject of $\S\S$\ref{sec:Norm-of-Rk(ell)}.

\section{High trace method}

\subsection{Norm of $\protect\uB(s)^{(\ell)}$\label{sec:Norm-of-B(ell)}}

The main result of this $\S$\ref{sec:Norm-of-B(ell)} is the following.
\begin{prop}
\label{prop:B(ell)-norm-bound}Let $\K$ be a compact subset of $\{s\,:\,\Re(s)>\frac{\delta}{2}\,\}.$
There is $c=c(\K)>1$, $\rho=\rho(\K)<1$ such that for all $s\in\K$,
for all $1\leq\ell\leq\log n$ the event
\[
\|\uB(s)^{(\ell)}\|\leq(\log n)^{20}\rho^{\ell}
\]
 holds with probability at least $1-c\exp\left(-\frac{\ell\log n}{c\log\log n}\right)$.
\end{prop}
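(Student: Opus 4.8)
The plan is to estimate $\mathbb{E}\,\mathrm{tr}\bigl((\uB(s)^{(\ell)})(\uB(s)^{(\ell)})^{*}\bigr)^{m}$ for a well-chosen $m=m(n,\ell)$ and then apply Markov's inequality to the random variable $\|\uB(s)^{(\ell)}\|^{2m}\leq\mathrm{tr}\bigl(\uB(s)^{(\ell)}(\uB(s)^{(\ell)})^{*}\bigr)^{m}$. Expanding each factor $\uB(s)^{(\ell)}$ and $(\uB(s)^{(\ell)})^{*}$ into the sum over tangle-free non-backtracking paths in $F^{\ell+1}$, the trace becomes a sum over $2m$-tuples of such paths whose endpoints are glued in a cyclic pattern; using Proposition \ref{prop:conjugation} and the structure of $B(s)$, each term splits into an analytic/geometric factor of the form $\mathrm{tr}\bigl(A(s)_{\hat{\i}^{1}}A(s)_{\hat{\j}^{1}}^{*}\cdots A(s)_{\hat{\i}^{m}}A(s)_{\hat{\j}^{m}}^{*}\bigr)$ — controlled by Lemma \ref{lem:Trace-of-product-bound}, giving $C^{m}\prod_{k}\Upsilon_{\i^{k}}^{r}\Upsilon_{\j^{k}}^{r}$ — times an expectation of a product of entries of the centered permutation matrices $\underline{S}_{i}$ along the concatenated path.

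The heart of the argument is the combinatorial/probabilistic bound on $\mathbb{E}\bigl[\prod (\underline{S}_{i_t})_{x_t x_{t+1}}\bigr]$ summed over all relevant path-tuples. This is exactly the estimate that Bordenave--Collins carry out in \cite{BordenaveCollins}: one classifies the closed path of total length $\sim 2m\ell$ by the isomorphism type of the graph it traces out, observes that the centering forces every edge of this graph to be traversed at least twice for the expectation to be nonzero, counts the number of such path-tuples realizing a given graph with $v$ vertices and $e$ edges (controlled by $n^{v}$ times a factor depending on the excess $e-v+1$), and uses the tangle-free hypothesis baked into $F^{\ell+1}$ and $F_k^{\ell+1}$ to bound the excess and hence the number of ``free'' vertices. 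The outcome is a bound of the shape $n^{-m}$ times $(2d-1)^{2m\ell}$-type factors times the geometric factor $\bigl(\sum_{\i\in\W_\ell}\Upsilon_{\i}^{r}\bigr)^{2m}\cdot(\text{lower order})$; crucially $\sum_{\i\in\W_\ell}\Upsilon_{\i}^{2r}\leq C\exp(\ell P(2r))$ by Lemma \ref{lem:Pressure-estimate} (applicable since $r>\delta/2$ forces $2r>\delta$, hence $P(2r)<0$ by Bowen's theorem), and this negative pressure is what produces the decay $\rho^{\ell}$ with $\rho<1$.

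Putting the pieces together: choosing $m\asymp \frac{\log n}{\log\log n}$, the $n^{-m}$ gain beats the polynomial-in-$n$ path-counting overhead, so $\mathbb{E}\,\mathrm{tr}\bigl(\uB(s)^{(\ell)}(\uB(s)^{(\ell)})^{*}\bigr)^{m}\leq \bigl((\log n)^{C}\rho^{\ell}\bigr)^{2m}$ for a slightly larger $\rho<1$; Markov's inequality at threshold $\bigl((\log n)^{20}\rho^{\ell}\bigr)^{2m}$ then yields failure probability $\leq \rho'^{\,2m\ell}\leq c\exp\bigl(-\frac{\ell\log n}{c\log\log n}\bigr)$ after adjusting constants and absorbing the $(\log n)^{C}$ versus $(\log n)^{20}$ slack into $\rho$ (legitimate since $\ell\geq 1$ and we may shrink $\rho$). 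The main obstacle is genuinely the second paragraph: adapting the Bordenave--Collins graph-counting/word-reduction scheme to our setting where (i) $\H(\D)$ does not factor as $\C^{r}\otimes\ell^{2}([2d])$, (ii) the operators $A_j(s)$ are infinite-dimensional, and (iii) the symmetry condition $A_i=A_{\bar i}^{*}$ fails, so that the analytic weights attached to an edge and to its reverse are not conjugate. Points (i)--(ii) are handled because all analytic data is funneled through the single scalar bound of Lemma \ref{lem:Trace-of-product-bound}, which only sees the products $\Upsilon_{\i}^{r}\Upsilon_{\j}^{r}$; point (iii) requires care in how edges are matched up in the trace expansion, but since the geometric weight of a word and its mirror are comparable by the mirror estimate \eqref{eq:upsilon-mirror-eq}, the loss is only a bounded factor per edge, absorbed into $C^{m}$. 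The tangle-free restriction, already encoded in $\uB(s)^{(\ell)}$ via $F^{\ell+1}$, is exactly what makes the graph-counting converge; the complementary tangled contributions are separately negligible by Lemma \ref{lem:tangle-probability} and are not needed here.
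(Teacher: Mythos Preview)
Your overall architecture matches the paper exactly: high-trace method, expansion over $2m$-tuples of tangle-free paths in $W_{\ell,m}$, separation into the analytic factor bounded by Lemma \ref{lem:Trace-of-product-bound} and the probabilistic factor $\E\bigl[\prod (\underline{S}_{i_t})_{x_t x_{t+1}}\bigr]$, choice $m\asymp\log n/\log\log n$, and Markov. The identification of $P(2r_1)<0$ as the source of $\rho<1$, and the use of the mirror estimate to neutralize the failure of $A_i=A_{\bar i}^*$, are also exactly right.

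There is, however, a genuine gap in the middle paragraph. For random \emph{permutation} matrices the centering does \emph{not} kill contributions from edges of multiplicity one: the entries of $\underline{S}_i$ in a given row (or column) are correlated, so $\E\bigl[\prod (\underline{S}_{i_t})_{x_t x_{t+1}}\bigr]$ is typically nonzero even when $G_\gamma$ has simply-covered edges. This is precisely why Bordenave--Collins' estimate (Lemma \ref{lem:BC-random-matrix-1} here) carries an explicit factor $(6\ell m/\sqrt{n})^{\max(e_1-4\,\rank-4m,0)}$ tracking the number $e_1$ of multiplicity-one edges, rather than simply vanishing. Consequently your claimed output ``$n^{-m}$ times $(2d-1)^{2m\ell}$-type factors times $(\sum\Upsilon_{\i}^r)^{2m}$'' is not what emerges, and the passage from $\Upsilon^r$ to $\Upsilon^{2r}$ (hence to $P(2r)$) cannot be justified by the ``every edge twice'' shortcut.

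The paper handles this by the new Proposition \ref{prop:sum-over-iso-class}: one sums the analytic weight $\prod_j\Upsilon_{\i(\gamma_j)'}^{r}$ over an isomorphism class by passing to the reduced graph $\tilde G$ and splitting segments into three types --- type \textbf{0} (simply covered, total length $e_1$, pressure $P(r_1)$), type \textbf{1} (first two traversals of a doubly-covered edge, pressure $P(2r_1)<0$), and type \textbf{2} (third and later traversals, deterministic, handled by the exponential bound \eqref{eq:upsilon-exp-bound}). The type-\textbf{0} contribution is only $C^{e_1}\exp(e_1 P(r_1))$, which need not decay; it is the interplay of this $C^{e_1}$ with the $e_1$-dependent penalty in Lemma \ref{lem:BC-random-matrix-1} that closes the argument. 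Your sketch omits this mechanism, and without it the bound does not go through.
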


Here we follow the structure of \cite[\S\S\S4.4.1]{BordenaveCollins}
up to Proposition \ref{prop:sum-over-iso-class}, when we make our
departure. The method used here to bound $\|\uB(s)^{(\ell)}\|$ is
the `high-trace' method originating in work of Füredi and Komlós \cite{FK}.

For $\ell,m\in\N$ we let $W_{\ell,m}$ denote the set of 
\[
\gamma=(\gamma_{1},\ldots,\gamma_{2m})\in(F^{\ell+1})^{2m}
\]
 such that for $j=1,\ldots,m$, the first half-edge (i.e. $\gamma_{2j,1}=(x_{2j,1},i_{2j,1})$)
of $\gamma_{2j}$ is the same as the first half-edge of $\gamma_{2j+1}$,
and the last half-edge of $\gamma_{2j-1}$ is the same as the last
half-edge of $\gamma_{2j}$. Here we let $\gamma_{2m+1}\eqdf\gamma_{1}$.

Following \cite[(36)]{BordenaveCollins} we get
\begin{align*}
\|\uB(s)^{(\ell)}\|^{2m}\leq & \tr\left(\left(\uB(s)^{(\ell)}(\uB(s)^{(\ell)})^{*}\right)^{m}\right)\\
= & \sum_{\substack{\gamma=(\gamma_{1},\ldots,\gamma_{2m})\in W_{\ell,m}\\
\gamma_{j}=((x_{j,1},i_{j,1}),\ldots,(x_{j,\ell+1},i_{j,\ell+1}))
}
}\left(\tr\left[A(s)_{\hat{\i}(\gamma_{1})}A(s)_{\hat{\i}(\gamma_{2})}^{*}\cdots A(s)_{\hat{\i}(\gamma_{2m-1})}A(s)_{\hat{\i}(\gamma_{2m})}^{*}\right]\right)\\
 & \cdot\prod_{j=1}^{2m}\prod_{t=1}^{\ell}(\underline{S}_{i_{j,t}})_{x_{j,t}x_{j,t+1}}.
\end{align*}
Here we used that all $A(s)_{\hat{\i}}$ and indeed also $\uB(s)^{(\ell)}$
are trace class operators (cf. Remark \ref{rem:About-trace-class}).

Therefore
\begin{align}
\E[\|\uB(s)^{(\ell)}\|^{2m}] & \leq\sum_{\substack{\gamma=(\gamma_{1},\ldots,\gamma_{2m})\in W_{\ell,m}\\
\gamma_{j}=((x_{j,1},i_{j,1}),\ldots,(x_{j,\ell+1},i_{j,\ell+1}))
}
}\left|\tr\left[A(s)_{\hat{\i}(\gamma_{1})}A(s)_{\hat{\i}(\gamma_{2})}^{*}\cdots A(s)_{\hat{\i}(\gamma_{2m-1})}A(s)_{\hat{\i}(\gamma_{2m})}^{*}\right]\right|\nonumber \\
 & \cdot\left|\E\left[\prod_{j=1}^{2m}\prod_{t=1}^{\ell}(\underline{S}_{i_{j,t}})_{x_{j,t}x_{j,t+1}}\right]\right|.\label{eq:exp-bl-bound-1}
\end{align}
Both the expectation and the absolute value of the trace in the right
hand side of the above inequality will be estimated in terms of topological
properties of $\gamma$, among other things. Recall that in Definition
\ref{def:paths} we defined for a path $\gamma\in E^{k}$ a colored
graph $G_{\gamma}$ that is roughly speaking the image of the path
$\gamma$. Now, for $\gamma=(\gamma_{1},\ldots,\gamma_{2m})\in W_{\ell,m}$
we define $G_{\gamma}$ to be the union of the graphs $G_{\gamma_{i}}$.
In other words, the vertices of $G_{\gamma}$ are the elements of
$[n]$ visited by $\gamma$ and the edges are just the ones followed
by $\gamma$ with their corresponding colors. We say that a colored
edge of $G_{\gamma}$ has \emph{multiplicity one }if it is traversed
exactly once by $\gamma$ (in either direction, so an edge traversed
in both directions is \textbf{not }multiplicity one).

Notice that for $\gamma\in W_{\ell,m}$, $G_{\gamma}$ is always connected.
We let $W_{\ell,m}(v,e)$ denote the collection of $\gamma\in W_{\ell,m}$
such that $G_{\gamma}$ has $v$ vertices and $e$ edges; by the previous
remark $W_{\ell,m}(v,e)$ is empty unless $e-v+1\geq0$.

We say two elements $\gamma$ and $\gamma'$ of $W_{\ell,m}$ are
\emph{isomorphic }if $\gamma'$ is obtained from $\gamma$ by changing
the labels (in $[n]$) of the vertices of $G_{\gamma}$ and changing
the colors (in $[2d])$ assigned to edges in $G_{\gamma}$; these
changes to $G_{\gamma}$ clearly induce a well-defined change of $\gamma$.
The reader can see \cite[pp. 841-842]{BordenaveCollins} for a more
formal definition.

Bordenave-Collins prove, by associating to each isomorphism class
in $W_{\ell,m}$ a canonical element, and cleverly counting these
canonial elements, the following result \cite[Lemma 25]{BordenaveCollins}.
\begin{lem}
\label{lem:iso-class-count-B}The number of isomorphism classes in
$W_{\ell,m}(v,e)$ is
\[
\leq\left(4d\ell m\right)^{6m\cdot\rank+10m}
\]
where\footnote{We choose to use $\rank$ rather than what Bordenave-Collins call
$\chi$, because of potential confusion with Euler characteristic.} $\rank\eqdf e-v+1$.
\end{lem}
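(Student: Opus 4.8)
The plan is to follow the ``canonical element'' counting scheme of Bordenave--Collins; this lemma is in fact \cite[Lemma 25]{BordenaveCollins}, and its proof uses the F\"uredi--Koml\'os high-trace method as refined by Friedman and Bordenave. Since only the isomorphism class matters, one first fixes a canonical representative of each class by relabelling: concatenate the $2m$ segments of $\gamma=(\gamma_1,\dots,\gamma_{2m})$ into a single closed walk of length $2m\ell$ (running forward along $\gamma_1$, backward along $\gamma_2$ since it shares the last half-edge of $\gamma_1$, forward along $\gamma_3$ since it shares the first half-edge of $\gamma_2$, and so on, closing up cyclically because $\gamma_{2m}$ and $\gamma_1$ share their first half-edge), then label the vertices of $G_\gamma$ by $1,\dots,v$ in order of first appearance along this walk and the colours by order of first appearance. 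Counting isomorphism classes in $W_{\ell,m}(v,e)$ is thereby the same as counting the ``shapes'' of such closed walks on $v$ vertices that use exactly $e$ edges.

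Next one encodes a shape step by step, recording at each of the $2m\ell$ steps only enough data to reconstruct the continuation. Call a step \emph{first-visit} if its terminal vertex is new; there are exactly $v-1$ of these, and in the canonical labelling they carry no information. The crux is that almost all of the remaining $2m\ell-v+1$ steps are \emph{forced}: each $\gamma_i$ is non-backtracking and, crucially, tangle-free, so its traced subgraph $G_{\gamma_i}$ has at most one cycle, and once a segment's walk re-enters the already-explored part of the graph its continuation is determined until it next encounters one of the $\rank=e-v+1$ independent cycles of $G_\gamma$, or reaches the end of the segment. One therefore designates a set of \emph{important} steps --- the first-visit steps together with the genuine decision points --- and proves, tracking all constants, that there are at most $6m\cdot\rank+10m$ of them (morally, $O(\rank)$ choices associated to each excess edge interacting with each of the $O(m)$ forward/backward segment pairs, plus $O(1)$ of bookkeeping per segment, such as its initial half-edge and the at most one cycle it can trace). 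At each important step the recorded data --- which previously seen vertex ($\le v\le 2m\ell$ choices), which colour ($\le 2d$), and which segment and position within it ($\le 2m\ell$) --- amounts to at most $4d\ell m$ possibilities, so the number of shapes is at most $(4d\ell m)^{6m\cdot\rank+10m}$, as claimed.

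The main obstacle is precisely the bound ``number of important steps $\le 6m\cdot\rank+10m$'': one must argue carefully that the non-backtracking and tangle-free hypotheses genuinely prevent the closed walk from making more than $O(m\rank+m)$ independent choices, and that the various types of decision point combine to give exactly the stated exponent with those constants. This is the content of \cite[\S 4.4.1]{BordenaveCollins} leading to their Lemma 25, which we import; no new ingredient beyond theirs is needed for this step.
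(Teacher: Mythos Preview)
Your proposal is correct and takes essentially the same approach as the paper: the paper does not prove this lemma itself but simply cites it as \cite[Lemma 25]{BordenaveCollins}, noting that the proof goes by associating a canonical element to each isomorphism class and counting these. Your sketch of the F\"uredi--Koml\'os/Bordenave encoding of canonical walks is exactly that argument, so you are importing the same result the paper imports.
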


\begin{rem}
It is in this lemma that the tangle-free hypothesis is crucially used.
\end{rem}

Bordenave-Collins also prove the following random matrix result in
\cite[Lemma 27]{BordenaveCollins}.
\begin{lem}
\label{lem:BC-random-matrix-1}There is a constant $c>0$ such that
if $2\ell m\leq\sqrt{n}$ and $\gamma=(\gamma_{1},\ldots,\gamma_{2m})\in W_{\ell,m}(v,e),$with
$\gamma_{j}=((x_{j,1},i_{j,1}),\ldots,(x_{j,\ell+1},i_{j,\ell+1}))$
for $j\in[2m]$, with $G_{\gamma}$ having $e_{1}$ edges of multiplicity
1, then
\[
\left|\E\left[\prod_{j=1}^{2m}\prod_{t=1}^{\ell}(\underline{S}_{i_{j,t}})_{x_{j,t}x_{j,t+1}}\right]\right|\leq c^{m+\rank}\left(\frac{1}{n}\right)^{e}\left(\frac{6\ell m}{\sqrt{n}}\right)^{\max\left(e_{1}-4\cdot\rank-4m,0\right)},
\]
where $\rank=e-v+1$.
\end{lem}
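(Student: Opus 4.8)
The plan is to follow Bordenave--Collins \cite[Lemma 27]{BordenaveCollins}. First I would exploit independence of $\sigma_{1},\dots,\sigma_{d}$: for each colour $c\in[d]$ collect the factors $(\underline{S}_{i_{j,t}})_{x_{j,t}x_{j,t+1}}$ with $i_{j,t}\in\{c,\bar c\}$, use that $\underline{S}_{\bar c}$ is the transpose of $\underline{S}_{c}$ to turn colour-$\bar c$ factors into colour-$c$ factors with reversed arcs, and thereby write the product as $\prod_{c=1}^{d}P_{c}$ where $P_{c}$ is a word in the entries of the single centred permutation matrix $\underline{S}_{c}=S_{c}-\frac1n\mathbf{1}\otimes\mathbf{1}$. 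Then $\E\bigl[\prod_{j,t}(\underline{S}_{i_{j,t}})_{x_{j,t}x_{j,t+1}}\bigr]=\prod_{c}\E[P_{c}]$, and it suffices to estimate each single-permutation factor in terms of the colour-$c$ subgraph $G_{\gamma}^{(c)}$ of $G_{\gamma}$, say with $v_{c}$ vertices, $e_{c}$ edges, $e_{1,c}$ of them of multiplicity one, and first Betti number $\rank_{c}$; note $\sum_{c}e_{c}=e$, $\sum_{c}e_{1,c}=e_{1}$, and $\sum_{c}\rank_{c}\le\rank$ by submodularity of the graphic-matroid rank applied to the edge-partition of the connected graph $G_{\gamma}$.

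The core is the single-permutation estimate. Writing $\underline{S}_{e}=S_{e}-\tfrac1n$ and expanding the word $P_{c}$, only arc configurations that are genuine partial permutations survive (if two distinct arcs of $P_{c}$ share a source or a target the expectation vanishes, a permutation matrix having at most one $1$ per row and per column), and for a partial permutation $B$ of $q$ arcs one has $\E\bigl[\prod_{e\in B}S_{e}\bigr]=\tfrac{(n-q)!}{n!}$. The main term is of order $n^{-e_{c}}$, producing the factor $(1/n)^{e}$ in the final product. The extra smallness of order $6\ell m/\sqrt n$ per multiplicity-one edge is extracted from cancellation: a multiplicity-one arc $e_{0}$ contributes, in the expansion, a $+S_{e_{0}}$ part and a $-\tfrac1n$ part; pairing the subset $B\ni e_{0}$ with $B\setminus\{e_{0}\}$ and using $\tfrac{(n-|B|+1)!}{n!}=(n-|B|+1)\tfrac{(n-|B|)!}{n!}$, the two combine into the original term times $\tfrac{|B|-1}{n}=O(\ell m/n)\le 6\ell m/\sqrt n$, where the hypothesis $2\ell m\le\sqrt n$ is used. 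Iterating this by peeling leaves of $G_{\gamma}^{(c)}$, one gains such a factor for every multiplicity-one edge except those that cannot be peeled off cleanly: those incident to one of the $\rank_{c}$ independent cycles of $G_{\gamma}^{(c)}$, and those incident to the boundedly many vertices where the $2m$ paths of the $W_{\ell,m}$-configuration are glued. Their number is $O(\rank_{c}+m)$, and with the bookkeeping of \cite{BordenaveCollins} it is $\le 4\rank_{c}+4m$, giving $|\E[P_{c}]|\le C^{e_{c}}n^{-e_{c}}(6\ell m/\sqrt n)^{\max(e_{1,c}-4\rank_{c}-4m,\,0)}$; the $1+O(1/n)$ corrections coming from higher-multiplicity arcs are absorbed into the constant since there are only $O(\ell m)$ arcs in total and $2\ell m\le\sqrt n$.

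Finally I would multiply the $d$ colour estimates, collect the constants $C^{e_{c}}$ and the combinatorial factors into a single $c^{m+\rank}$, use $\sum_{c}e_{c}=e$ to get $(1/n)^{e}$, and use $\sum_{c}e_{1,c}=e_{1}$, $\sum_{c}\rank_{c}\le\rank$ together with the fact that the $2m$ gluings involve only $O(m)$ colour-incidences to see that the total exponent of $6\ell m/\sqrt n$ is at least $e_{1}-4\rank-4m$; truncating this exponent at $0$ — legitimate since $6\ell m/\sqrt n$ may exceed $1$ — yields the stated bound.

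The hard part will be the last step of the second paragraph, namely the systematic organisation of the cancellations. Naively expanding $\prod_{e}\underline{S}_{e}^{m(e)}$ produces exponentially many (roughly $2^{\#\mathrm{arcs}}$) terms, so one cannot estimate them individually; one must fix a peeling order on $G_{\gamma}^{(c)}$, pair up terms coherently along it, and then prove that precisely the multiplicity-one edges trapped near cycles or near the $2m$ junction vertices of the $W_{\ell,m}$-structure fail to yield a cancellation factor, so that the exceptional count is $O(\rank+m)$. This is the combinatorial heart of \cite[Lemma 27]{BordenaveCollins}, and it is exactly here that the tangle-free hypothesis — already invoked in the bound on the number of isomorphism classes in Lemma \ref{lem:iso-class-count-B} — constrains the local structure of $G_{\gamma}$ enough to make the accounting work.
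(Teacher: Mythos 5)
A point of context first: the paper does not prove this statement at all --- it is imported directly from Bordenave--Collins \cite[Lemma 27]{BordenaveCollins}, whose proof rests on a single-permutation moment estimate going back to \cite{bordenave2015new}. Your proposal is therefore an attempt to reprove that input, and its skeleton is the right one: factor the expectation over the independent colours, reduce to a word in the entries of one centred permutation matrix, expand $\underline{S}=S-\tfrac{1}{n}\mathbf{1}\otimes\mathbf{1}$, observe that only partial-permutation arc sets give nonvanishing $\E[\prod_{e\in B}S_{e}]=\tfrac{(n-|B|)!}{n!}$, and argue that multiplicity-one arcs produce cancellations except for $O(\rank+m)$ exceptional ones.

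However, the cancellation mechanism you describe --- pair $B\ni e_{0}$ with $B\setminus\{e_{0}\}$ to gain a factor $\tfrac{|B|-1}{n}=O(\ell m/n)$, then iterate by peeling leaves so that the gains from distinct multiplicity-one edges multiply --- is not correct, and this is exactly the heart of the lemma. The gains do not multiply: for two arc-disjoint multiplicity-one arcs of the same colour one computes exactly $\E[\underline{S}_{x_{1}y_{1}}\underline{S}_{x_{2}y_{2}}]=\tfrac{1}{n(n-1)}-\tfrac{1}{n^{2}}=\tfrac{1}{n^{2}(n-1)}$, i.e.\ a single factor of order $\tfrac{1}{n}$ of savings over $n^{-2}$, whereas iterated pairing would give at most $Cn^{-2}(\ell m/n)^{2}\leq\tfrac{C}{4}n^{-3}\cdot\tfrac{\ell^2m^2}{n}$, strictly smaller than the true value in the regime $2\ell m\leq\sqrt{n}$; so the proposed bookkeeping would ``prove'' a false, stronger bound. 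The genuine per-edge gain is only of order $\ell m/\sqrt{n}$ --- it arises from the second-order behaviour of $\tfrac{(n-q)!}{n!}\approx n^{-q}(1+O(q^{2}/n))$ interacting with the alternating sum over subsets, not from a first-order telescoping --- and extracting it, together with the count $4\cdot\rank+4m$ of exceptional (inconsistent) multiplicity-one arcs, requires the careful single-permutation estimate in Bordenave--Collins' proof of \cite[Lemma 27]{BordenaveCollins} rather than leaf-peeling. A smaller but related slip: tangle-freeness is not what makes this expectation bound work (it is needed for the isomorphism-class count, Lemma \ref{lem:iso-class-count-B}); here the exceptional-edge count comes from the structure of $G_{\gamma}$ as a union of $2m$ non-backtracking paths. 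As written, then, the proposal has a genuine gap at its central step, and the remedy is to invoke (or fully reproduce) the Bordenave--Collins single-permutation moment bound.
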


\begin{rem}
The quantity $\left|\E\left[\prod_{j=1}^{2m}\prod_{t=1}^{\ell}(\underline{S}_{i_{j,t}})_{x_{j,t}x_{j,t+1}}\right]\right|$
is constant on isomorphism classes in $W_{\ell,m}$.
\end{rem}

Now grouping terms in (\ref{eq:exp-bl-bound-1}) into isomorphism
classes and using the previous lemmas (this will be done formally
later), the only estimate left is the following one. This is a main
point of departure from Bordenave-Collins and replaces \cite[Lemma 26]{BordenaveCollins}. 
\begin{prop}
\label{prop:sum-over-iso-class}For any compact subset $\K\subset\{\,s\,:\,\Re(s)>\frac{\delta}{2}\,\}$,
there is $\rho=\rho(\K)<1$ such that for all $s\in\K$, all $m>0$
and $\gamma^{0}\in W_{\ell,m}(v,e)$ 
\[
\sum_{\substack{\gamma\text{ isomorphic to \ensuremath{\gamma^{0}} }\\
\gamma=(\gamma_{1},\ldots,\gamma_{2m})
}
}\left|\tr\left[A(s)_{\hat{\i}(\gamma_{1})}A(s)_{\hat{\i}(\gamma_{2})}^{*}\cdots A(s)_{\hat{\i}(\gamma_{2m-1})}A(s)_{\hat{\i}(\gamma_{2m})}^{*}\right]\right|\leq C^{m+\rank+e_{1}}n^{v}\rho^{2\ell m}.
\]
where $e_{1}$ is the number of edges of $G_{\gamma^{0}}$ of multiplicity
one and $\rank=e-v+1$.
\end{prop}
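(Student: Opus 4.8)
The plan is to reduce the sum over the isomorphism class of $\gamma^0$ to a sum over colourings and vertex-labellings, then apply Lemma~\ref{lem:Trace-of-product-bound} to bound each trace and Lemma~\ref{lem:Pressure-estimate} (Bowen's pressure estimate) to sum over the colour sequences. Fix a representative $\gamma^0 \in W_{\ell,m}(v,e)$ with colored graph $G_{\gamma^0}$. An element $\gamma$ isomorphic to $\gamma^0$ is specified by an injective labelling of the $v$ vertices of $G_{\gamma^0}$ by elements of $[n]$ (at most $n^v$ choices) together with a colouring of the $e$ edges of $G_{\gamma^0}$ by colours in $[2d]$ that is \emph{compatible} with the non-backtracking and admissibility constraints coming from the words $\hat{\i}(\gamma_j)$. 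Crucially, the trace $\left|\tr\left[A(s)_{\hat{\i}(\gamma_1)}A(s)_{\hat{\i}(\gamma_2)}^* \cdots A(s)_{\hat{\i}(\gamma_{2m})}^*\right]\right|$ depends only on the colouring, not on the vertex labels, so the $n^v$ factor pulls out and we are left with estimating
\[
\sum_{\text{compatible colourings } c} \left|\tr\left[A(s)_{\hat{\i}(\gamma_1^c)}A(s)_{\hat{\i}(\gamma_2^c)}^* \cdots A(s)_{\hat{\i}(\gamma_{2m}^c)}^*\right]\right|.
\]

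For each fixed compatible colouring, Lemma~\ref{lem:Trace-of-product-bound} gives the bound $C^m \prod_{j=1}^{m}\Upsilon_{\hat{\i}(\gamma_{2j-1}^c)}^{r}\,\Upsilon_{\hat{\i}(\gamma_{2j}^c)}^{r}$ with $r=\Re(s)>\delta/2$. Now I want to sum the product of $\Upsilon$'s over colourings. The key structural point — inherited from the Bordenave--Collins bookkeeping — is that the colouring is essentially free on the multiplicity-one edges and constrained (up to finitely many choices per "block") on the rest of the graph, because the graph decomposes, after removing the $e_1$ multiplicity-one edges, into a union of at most $O(m+\rank)$ paths/pieces whose total colour content is determined by a bounded amount of data. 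The exponential bound $\Upsilon_{\i}\le K D^{-|\i|}$ from Lemma~\ref{lem:coarse-homomorphism} and rough multiplicativity \eqref{eq:upsilon-mult-eq-1} let me factor $\prod_j \Upsilon_{\hat{\i}(\gamma_j^c)}^r$ along these pieces. Summing $\Upsilon_{\i}^r$ over all colour sequences $\i$ of a given length $L$ is exactly what Lemma~\ref{lem:Pressure-estimate} controls: the sum is $\le C\exp(LP(r_1))$ for any $0\le r_1 < r$, and since $r > \delta/2$ we may choose $r_1$ with $r>r_1>\delta/2$, making $P(r_1) < P(\delta/2)$; because $P(\delta/2)<0$ (as $P(\delta)=0$ and $P$ is strictly decreasing, so $P(\delta/2)>0$ — wait, that is the wrong sign). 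The correct observation is that $r_1 > \delta/2$ forces $2P(r_1) < 2P(\delta/2) = $ a quantity we must show is $\le 0$; in fact what we need is $2r > \delta$, i.e. the "doubled" exponent $2r$ exceeds $\delta$, so that summing pairs $\Upsilon_{\i}^r \Upsilon_{\bar{\i}}^r \asymp \Upsilon_{\i}^{2r}$ over $\i$ of length $\sim\ell$ contributes $\exp(\ell P(2r_1))$ with $P(2r_1)<P(\delta)=0$ for suitable $r_1$. Each piece has total length on the order of $\ell$ (the pieces are subpaths of the $\gamma_j$, each of length $\ell$), and there are $2m$ of them, so the product of these sums is bounded by $C^{m}\exp(2\ell m \cdot P(2r_1)) = C^m \rho^{2\ell m}$ with $\rho \eqdf \exp(P(2r_1)) < 1$. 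The edges of multiplicity one that are summed freely each contribute a factor bounded by a convergent geometric series $\sum_{i\in[2d]}\Upsilon^r \le C$, absorbed into $C^{e_1}$, and the bounded number of "gluing choices" along the $O(m+\rank)$ pieces gives the remaining $C^{m+\rank}$.

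Assembling: $n^v$ from vertex labels, $C^{m+\rank+e_1}$ from trace constants, gluing choices, and free multiplicity-one edges, and $\rho^{2\ell m}$ from the pressure sums over the genuinely path-like portions, yielding the claimed bound $C^{m+\rank+e_1} n^v \rho^{2\ell m}$. The main obstacle — and the reason this replaces \cite[Lemma~26]{BordenaveCollins} rather than quoting it — is the lack of the self-adjointness symmetry $A_i = A_{\bar i}^*$: in the symmetric setting an edge traversed twice contributes $A_i A_i^* = A_i A_i^*$ with a clean norm, but here traversing an edge forwards then backwards pairs $A(s)_{\i}$ with $A(s)_{\j}^*$ where $\j \ne \bar{\i}$ in general, so I cannot collapse repeated edges into non-negative operators and must instead track, via the mirror estimate \eqref{eq:upsilon-mirror-eq}, that $\Upsilon_{\bar{\i}}^r \asymp \Upsilon_{\i}^r$ and hence that each traversal still pays the full $\Upsilon^r$ "length cost" regardless of direction. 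Making precise the claim that after deleting the $e_1$ multiplicity-one edges the remaining colour data is determined up to $C^{O(m+\rank)}$ choices — i.e. correctly organizing the graph into pieces along which the $\Upsilon$ product telescopes via rough multiplicativity — is the delicate combinatorial heart of the argument, and is where I expect to spend the most care.
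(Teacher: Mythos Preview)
Your overall strategy—apply Lemma~\ref{lem:Trace-of-product-bound}, factor out $n^v$ for vertex labels, then use the pressure estimate with the doubled exponent $2r>\delta$ together with the mirror estimate—matches the paper's, and you correctly identify (after some wobbling) that $P(2r_1)<0$ for $r_1>\delta/2$ is the source of $\rho<1$. But the combinatorial decomposition you sketch has a genuine gap, and is in fact internally inconsistent. At one point you say the colour content of the multiplicity-$\ge 2$ part is ``determined by a bounded amount of data'' up to $C^{O(m+\rank)}$ choices; this is false, since each of the $e-e_1$ such edges carries a free colour, and $e-e_1$ can be as large as $\ell m$. At another point you \emph{do} sum over these colours in ``pairs $\Upsilon_{\i}^r\Upsilon_{\bar\i}^r\asymp\Upsilon_{\i}^{2r}$'', but then claim the pieces have ``length on the order of~$\ell$'' and that there are ``$2m$ of them'', yielding $\exp(2\ell m\,P(2r_1))$. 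This conflates the pieces of the edge decomposition with the full paths $\gamma_j$: the pieces have arbitrary lengths summing to $e-e_1$, not to $2\ell m$, so the pairing argument actually yields only $\exp\bigl((e-e_1)P(2r_1)\bigr)$. When $e\ll\ell m$—the regime where $\gamma$ winds many times around a small graph—this gives no $\rho^{2\ell m}$ decay at all.

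What is missing is a third regime. The paper partitions the \emph{traversals} (not the edges) of each $\i(\gamma_j)'$ into three types along the reduced graph~$\tilde{G}$ obtained from $G_{\gamma^0}$ by suppressing the interior degree-$2$ vertices: type~\textbf{0} are traversals of $\tilde{G}$-edges covered exactly once (total length $e_1$; summed freely, absorbed into $C^{e_1}$); type~\textbf{1} are the first and second traversals of $\tilde{G}$-edges covered $\ge 2$ times (paired via the mirror estimate into $\Upsilon_{\u}^{2r}$ and summed over colours, giving $\exp\bigl((e-e_1)P(2r_1)\bigr)$); and type~\textbf{2} are third-and-later traversals, whose colours are \emph{already fixed} by the earlier types, so there is no sum and one instead uses the pointwise exponential bound $\Upsilon_{\u}^r\le K^r D^{-r|\u|}$ to extract a factor $D^{-r_1\bigl(2(\ell m-e)+e_1\bigr)}$. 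This produces two decaying factors, one in $e-e_1$ and one in $\ell m-e$; a final case split on whether $v\le\ell m/2$ or $v>\ell m/2$ then pulls $\rho^{2\ell m}$ out of one or the other. Your proposal has no analogue of type~\textbf{2}, and without it the small-$e$ case is uncontrolled.
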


\begin{proof}
Write $s=r+it$. First, we use Lemma \ref{lem:Trace-of-product-bound}
to get
\begin{align}
 & \sum_{\substack{\text{\ensuremath{\gamma\ }isomorphic to \ensuremath{\gamma^{0}} }\\
\gamma=(\gamma_{1},\ldots,\gamma_{2m})
}
}\left|\tr\left[A(s)_{\hat{\i}(\gamma_{1})}A(s)_{\hat{\i}(\gamma_{2})}^{*}\cdots A(s)_{\hat{\i}(\gamma_{2m-1})}A(s)_{\hat{\i}(\gamma_{2m})}^{*}\right]\right|\nonumber \\
 & \leq C^{m}\sum_{\substack{\text{\ensuremath{\gamma\ }isomorphic to \ensuremath{\gamma^{0}} }\\
\gamma=(\gamma_{1},\ldots,\gamma_{2m})
}
}\Upsilon_{\hat{\i}(\gamma_{1})}^{r}\Upsilon_{\hat{\i}(\gamma_{2})}^{r}\cdots\Upsilon_{\hat{\i}(\gamma_{2m})}^{r}\nonumber \\
 & \leq C_{1}^{m}\sum_{\substack{\text{\ensuremath{\gamma\ }isomorphic to \ensuremath{\gamma^{0}} }\\
\gamma=(\gamma_{1},\ldots,\gamma_{2m})
}
}\Upsilon_{\i(\gamma_{1})'}^{r}\Upsilon_{\i(\gamma_{2})'}^{r}\cdots\Upsilon_{\i(\gamma_{2m})'}^{r}\label{eq:keyprop-first-boun}
\end{align}
 for some $C_{1}(\K)>0$, where the last inequality used (\ref{eq:trivial-child-ineq}),
$\Upsilon_{i}\geq c(\Gamma)>0$ for $i\in[2d]$, and (\ref{eq:upsilon-mult-eq-1})
to deduce
\[
\Upsilon_{\hat{\i}}\leq c^{-1}\Upsilon_{\hat{\i}}\Upsilon_{i_{1}}\leq c^{-1}K\Upsilon_{\i}\leq c^{-1}K\Upsilon_{\i'}.
\]

Here we view $\gamma^{0}$ as fixed; let 
\[
\gamma^{0}=(\gamma_{1}^{0},\ldots,\gamma_{2m}^{0}),\quad\gamma_{j}^{0}=((x_{j,1}^{0},i_{j,1}^{0}),\ldots,(x_{j,\ell+1}^{0},i_{j,\ell+1}^{0})).
\]
We build a graph $\tilde{G}$ as follows. Let $\tilde{V}$ denote
the vertices of $G_{\gamma^{0}}$ that are either of degree $\geq3$
or of the form $x_{j,1}^{0}$ or $x_{j,\ell+1}^{0}$ for $j\in[2m]$.
Now, every vertex of $G_{\gamma^{0}}$ that is not in $\tilde{V}$
has degree $2$ and for each of these vertices, we remove it by merging
the two adjacent edges into one edge; this can be done sequentially.
The resulting graph on $\tilde{V}$ is called $\tilde{G}$ and has
edge set that we call $\tilde{E}$. Every edge of $\tilde{E}$ is
labeled by a sequence of edges of $G_{\gamma^{0}}$. We write $e'$
for the number of vertices of $\tilde{G}$ and $v'$ for the number
of vertices. Recall that $\rank\eqdf e-v+1$.

The following (in)equalities appear in \cite[proof of Lemma 26]{BordenaveCollins}
and are either obvious or not hard to check directly:
\begin{align}
e & =v+\rank-1,\label{eq:rank-rewritten}\\
e' & =v'+\rank-1,\nonumber \\
e' & \leq3\cdot\rank+4m-3.\label{eq:e'-ineq}
\end{align}

Choosing $\gamma$ isomorphic to $\gamma^{0}$ amounts to the following
choices:
\begin{itemize}
\item Choosing distinct numbers in $n$ for the vertices of $G_{\gamma^{0}}$,
and, independently,
\item Choosing `allowable' labels in $[2d]$ for the edges of $E_{\gamma^{0}}$.
\end{itemize}
There are clearly $(n)(n-1)\cdots(n-v+1)\leq n^{v}$ of the first
kind of choice. We will later incorporate this into our final estimate.
Now we count the second kind of choice.

The reason for manipulating (\ref{eq:keyprop-first-boun}) into its
current form is that the sequences 
\begin{equation}
\i(\gamma_{1})',\i(\gamma_{2})',\ldots,\i(\gamma_{2m})'\label{eq:long-sequence}
\end{equation}
appearing therein are exactly the sequence of edge colors read by
walking in $G_{\gamma}$ according to the respective $\gamma_{j}$.
Note that by assumption, the underlying graph of $G_{\gamma}$ (forgetting
edge colors) is the same as that of $G_{\gamma_{0}}$ and hence has
underlying topological graph $\tilde{G}$. For each edge of $\tilde{G}$,
the number of times it is \emph{covered} by $\gamma$ is the number
of times $\gamma$ traverses it in either direction. (Note that $\gamma$
and $\gamma_{0}$ cover each edge of $\tilde{G}$ the same number
of times, and in the same orders and directions.)

We now partition \textbf{separately} each $\i(\gamma_{j})'$ in (\ref{eq:keyprop-first-boun})
into subsequences of three\footnote{In Bordenave-Collins type \textbf{0} and type \textbf{1} are considered
as one type but here it is useful to separate these out.} mutually exclusive types:
\begin{description}
\item [{0)}] a subsequence that arises from $\gamma$ traversing an edge
of $\tilde{G}$ that is only ever covered once by $\gamma_{0}$ and
hence also $\gamma$.
\item [{1)}] a subsequence that arises from $\gamma$ traversing an edge
of $\tilde{G}$ for the first or second time (in any direction) that
is covered at least two times by $\gamma_{0}$ and hence also $\gamma$.
\item [{2)}] a subsequence that arises from $\gamma$ traversing a \textbf{\emph{sequence}}\emph{
}of edges of $\tilde{G}$ where each edge is traversed for at least
the third time (in either direction).
\end{description}
(Here subsequences are repeated according to their multiplicity.)
There are at most $2e'$ subsequences of type \textbf{0 }or type \textbf{1
}and hence also at most $2e'+4m$ subsequences of type \textbf{2 }and
at most $4e'+4m$ subsequences in total.

We use the multiplicative estimate (\ref{eq:upsilon-mult-eq-1}) for
$\Upsilon$ to obtain
\begin{equation}
\Upsilon_{\i(\gamma_{1})'}^{r}\Upsilon_{\i(\gamma_{2})'}^{r}\cdots\Upsilon_{\i(\gamma_{2m})'}^{r}\leq K^{(8e'+8m)r}\left(\prod_{\text{type \textbf{0}\textbf{ }\ensuremath{\u}}}\Upsilon_{\u}^{r}\right)\left(\prod_{\text{type \textbf{1}\textbf{ }\ensuremath{\u}}}\Upsilon_{\u}^{r}\right)\left(\prod_{\text{type \textbf{2 }\ensuremath{\u}}}\Upsilon_{\u}^{r}\right).\label{eq:splttingtotypes}
\end{equation}
We deal with these factors in turn.

Write $\{\U_{i}\}$ for the type \textbf{0 }subsequences of $\gamma$.
Their total length is clearly $e_{1}$ and there are at most $e'$
of them.

The subsequences of type \textbf{1 }come in pairs $\u,\v$ of the
form either 
\[
\u=\v,\quad\text{or }\u=\bar{\v}.
\]
In either case the contribution of this pair to $\left(\prod_{\text{type \textbf{1}\textbf{ }\ensuremath{\u}}}\Upsilon_{\u}^{r}\right)$,
by the mirror estimate (\ref{eq:upsilon-mirror-eq}), is 
\[
\leq K^{r}\Upsilon_{\u}^{2r}.
\]
Let $\{\u_{i}\}$ denote a collection of representative subsequences
of type \textbf{1}, one for each pair of type \textbf{1. }The total
number of these pairs is $\leq e'$. The total length of the $\u_{i}$
is $e-e_{1}.$ Therefore
\begin{equation}
\prod_{\text{type \textbf{1}\textbf{ }\ensuremath{\u}}}\Upsilon_{\u}^{r}\leq\prod_{i}\left(K^{r}\Upsilon_{\u_{i}}^{2r}\right)=K^{e'r}\prod_{i}\Upsilon_{\u_{i}}^{2r}.\label{eq:type1-pointwise}
\end{equation}

Finally we deal with the type \textbf{2 }subsequences. Recalling that
there are at most $2e'+4m$ of these and their total length is $2(\ell m-e)+e_{1}$
as in \cite[(44)]{BordenaveCollins}, by using (\ref{eq:upsilon-exp-bound})
we obtain
\begin{equation}
\prod_{\text{type \textbf{2}\textbf{ }\ensuremath{\u}}}\Upsilon_{\u}^{r}\leq\prod_{\text{type \textbf{2}\textbf{ }\ensuremath{\u}}}K^{r}D^{-r|\u|}\leq K^{r(2e'+4m)}D^{-r\left(2(\ell m-e\right)+e_{1})}.\label{eq:type2-pointwise-bound}
\end{equation}

Now, to choose $\gamma$ isomorphic to $\gamma_{0}$, we need to choose
the vertex labels in $[n]$ (of which there are $\leq n^{v}$ choices)
and then
\begin{itemize}
\item Choose the sequence of edge colors in each type \textbf{0 }subsequence
$\U_{i}$
\item Choose the sequence of edge colors in each type \textbf{1 }subsequence
$\u_{i}$
\item (This completes the choice of $\gamma$ because type \textbf{2 }subsequences
are determined by the previous choices.)
\end{itemize}
Now combining (\ref{eq:keyprop-first-boun}), (\ref{eq:long-sequence}),
(\ref{eq:splttingtotypes}), (\ref{eq:type1-pointwise}), (\ref{eq:type2-pointwise-bound})
we obtain
\begin{align}
 & \sum_{\substack{\text{\ensuremath{\gamma\ }isomorphic to \ensuremath{\gamma^{0}} }\\
\gamma=(\gamma_{1},\ldots,\gamma_{2m})
}
}\left|\tr\left[A(s)_{\i(\gamma_{1})}A(s)_{\i(\gamma_{2})}^{*}\cdots A(s)_{\i(\gamma_{2m-1})}A(s)_{\i(\gamma_{2m})}^{*}\right]\right|\nonumber \\
 & \leq n^{v}C_{1}^{m}K^{(11e'+12m)r}D^{-r\left(2(\ell m-e\right)+e_{1})}\left(\prod_{i}\Upsilon_{\U_{i}}^{r}\right)\left(\prod_{i}\Upsilon_{\u_{i}}^{2r}\right).\label{eq:pre-pressure}
\end{align}
Now using the bound in terms of the pressure from Lemma \ref{lem:Pressure-estimate}
we obtain that 
\begin{align*}
\eqref{eq:pre-pressure} & \leq n^{v}C_{1}^{m}K^{(11e'+12m)r}D^{-r\left(2(\ell m-e\right)+e_{1})}C^{2e'}\text{\ensuremath{\left(\prod_{i}\exp(|\U_{i}|P(r_{1}))\right)}}\left(\prod_{i}\exp(|\u_{i}|P(2r_{1}))\right)\\
 & =n^{v}C_{1}^{m}K^{(11e'+12m)r}D^{-r\left(2(\ell m-e\right)+e_{1})}C^{2e'}\exp(e_{1}P(r_{1}))\exp((e-e_{1})P(2r_{1}))
\end{align*}
where 
\[
r_{1}\eqdf\min\{\,\Re(s)\,:\,s\in\K\,\}>\frac{\delta}{2}.
\]
We conclude by using (\ref{eq:rank-rewritten}) and (\ref{eq:e'-ineq})
to obtain for some consolidated constant $c>1$ depending on all parameters
\[
\eqref{eq:pre-pressure}\leq c^{m+\rank+e_{1}}n^{v}D^{-r_{1}2(\ell m-v)}\exp(vP(2r_{1})).
\]
The key point here is that $P(2r_{1})<0$.

If $v\leq\frac{\ell m}{2}$ we get the stated result with $\rho=D^{-\frac{r_{1}}{2}}$.
Otherwise $v>\frac{\ell m}{2}$ and we get the result with $\rho=\exp\left(\frac{P(2r_{1})}{4}\right)<1$.
In any case we get the result with $\rho<1$ equal to the maximum
of these two values. This concludes the proof.
\end{proof}
\begin{proof}[Proof of Proposition \ref{prop:B(ell)-norm-bound}]

Given Lemmas \ref{lem:iso-class-count-B} and \ref{lem:BC-random-matrix-1}
and Proposition \ref{prop:sum-over-iso-class} the proof is very similar
to \cite[proof of Prop. 24]{BordenaveCollins} but we give the details
here for completeness.

For $n\geq3$ let 
\[
m=\left\lfloor \frac{\log n}{13\log\log n}\right\rfloor .
\]

We partition all possible isomorphism classes of paths $\gamma^{0}$
in $W_{\ell,m}$ according to the number $v$ of vertices and number
$e$ of edges of $G_{\gamma^{0}}$ and also the number $e_{1}$ of
edges of $G_{\gamma^{0}}$ that are multiplicity one. We write $\rank\eqdf e-v+1.$
We have the inequalities
\begin{equation}
2(e-\ell m)\leq e_{1}\leq e\label{eq:e1-ineq}
\end{equation}
 by \cite[Lemma 27]{BordenaveCollins} and can assume $e-v+1\geq0$
(or else $W_{\ell,m}(v,e)$ is empty).

The contribution to (\ref{eq:exp-bl-bound-1}) from a given isomorphism
class with parameters $v,e,e_{1}$ is 
\begin{equation}
\leq n(cC)^{m+\rank}\left(\frac{1}{n}\right)^{\rank}\left(\frac{6\ell m}{\sqrt{n}}\right)^{\max\left(e_{1}-4\cdot\rank-4m,0\right)}C^{e_{1}}\rho^{2\ell m}\label{eq:iso-class-contrib-bound}
\end{equation}
by Lemma \ref{lem:BC-random-matrix-1} and Proposition \ref{prop:sum-over-iso-class}.

If $e_{1}-4\cdot\rank-4m\leq0$ then $C^{e_{1}}\leq(C^{4})^{\rank+m}$
and therefore
\[
\eqref{eq:iso-class-contrib-bound}\leq n(cC^{5})^{m+\rank}\left(\frac{1}{n}\right)^{\rank}\left(\frac{6\ell m}{\sqrt{n}}\right)^{0}\rho^{2\ell m}.
\]
If alternatively $e_{1}-4\cdot\rank-4m\geq0$ then by (\ref{eq:e1-ineq})
we get
\begin{align*}
\eqref{eq:iso-class-contrib-bound} & \leq n(cC^{5})^{m+\rank}\left(\frac{1}{n}\right)^{\rank}\left(\frac{6C\ell m}{\sqrt{n}}\right)^{e_{1}-4\cdot\rank-4m}\rho^{2\ell m}\\
 & \leq n(cC^{5})^{m+\rank}\left(\frac{1}{n}\right)^{\rank}\left(\frac{6C\ell m}{\sqrt{n}}\right)^{2e-2\ell m-4\cdot\rank-4m}\rho^{2\ell m}\\
 & =n(cC^{5})^{m+\rank}\left(\frac{1}{n}\right)^{\rank}\left(\frac{6C\ell m}{\sqrt{n}}\right)^{2v-2(\ell+2)m-2\cdot\rank}\rho^{2\ell m}.
\end{align*}
In any case we get that the contribution to (\ref{eq:exp-bl-bound-1})
from a single isomorphism class is 
\[
\leq n(cC^{5})^{m+\rank}\left(\frac{1}{n}\right)^{\rank}\left(\frac{6C\ell m}{\sqrt{n}}\right)^{\max(2v-2(\ell+2)m-2\cdot\rank,0)}\rho^{2\ell m}.
\]
Now, using Lemma \ref{lem:iso-class-count-B} and summing over $v$
and $\rank$ we get 
\begin{align*}
 & \E[\|\uB(s)^{(\ell)}\|^{2m}]\\
\leq & n\rho^{2\ell m}\sum_{v=1}^{\infty}\sum_{\rank=0}^{\infty}\left(4d\ell m\right)^{6m\cdot\rank+10m}(cC^{5})^{m+\rank}\left(\frac{1}{n}\right)^{\rank}\left(\frac{6C\ell m}{\sqrt{n}}\right)^{\max(2v-2(\ell+2)m-2\cdot\rank,0)}.
\end{align*}
Now a series of elementary bounds involving geometric series that
can be found in \cite[proof of Prop. 24]{BordenaveCollins} leads
to
\[
\E[\|\uB(s)^{(\ell)}\|^{2m}]\leq n\rho^{2\ell m}(c'\ell m)^{10m}
\]
for some new constant $c'=c'(\K)>0$. Let $\rho'>\rho$ be a value
in $(\rho,1)$.

Applying Markov's inequality now gives 
\begin{align*}
\mathrm{Prob}[\|\uB(s)^{(\ell)}\|\geq(\log n)^{20}(\rho')^{\ell}] & \leq\frac{n(c'\ell m)^{10m}}{(\log n)^{40m}}\left(\frac{\rho}{\rho'}\right)^{2\ell m}\\
 & \leq\frac{n(c'\log n)^{20m}}{(\log n)^{40m}}\left(\frac{\rho}{\rho'}\right)^{2\ell m}\leq C'\left(\frac{\rho}{\rho'}\right)^{2\ell m}
\end{align*}
as required.
\end{proof}

\subsection{Norm of $R_{k}(s)^{(\ell)}$\label{sec:Norm-of-Rk(ell)}}
\begin{prop}
\label{prop:Rkl-norm-bound}Let $\K$ be a compact subset of $\{s\,:\,\Re(s)>\frac{\delta}{2}\,\}.$
There is $c=c(\K)>0$ and $\rho_{1}=\rho_{1}(\K)>1$ such that for
any $s\in\K$, for all $1\leq k\leq\ell\leq\log n$ the event
\[
\|R_{k}(s)^{(\ell)}\|\leq(\log n)^{40}\rho_{1}^{\ell}
\]
 holds with probability at least $1-c\exp\left(-\frac{\ell\log n}{c\log\log n}\right)$.
\end{prop}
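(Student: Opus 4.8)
The plan is to run the same high-trace argument used for $\uB(s)^{(\ell)}$ in $\S\S$\ref{sec:Norm-of-B(ell)}, but now applied to the operators $R_{k}(s)^{(\ell)}$. First I would, by analogy with the displayed identity preceding Proposition \ref{prop:B(ell)-norm-bound}, write
\[
\|R_{k}(s)^{(\ell)}\|^{2m}\leq\tr\left(\left(R_{k}(s)^{(\ell)}(R_{k}(s)^{(\ell)})^{*}\right)^{m}\right)
\]
and expand the right-hand side as a sum over $2m$-tuples of paths $\gamma=(\gamma_{1},\ldots,\gamma_{2m})$ of the appropriate type. The crucial bookkeeping difference from the $\uB$ case is that each path $\gamma_{j}\in F_{k}^{\ell+1}\setminus F^{\ell+1}$ contributes a product $\prod_{t=1}^{k-1}(\underline{S}_{i_{t}})\prod_{t=k+1}^{\ell}(S_{i_{t}})$ rather than $\prod_{t=1}^{\ell}(\underline{S}_{i_{t}})$; this is exactly the situation Bordenave-Collins handle in \cite[Lemma 28 and proof of Prop.~24]{BordenaveCollins}. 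So I would invoke their counting of isomorphism classes of such $\gamma$ (the analogue of Lemma \ref{lem:iso-class-count-B} for the $R_k$ walks, which uses $\ell$-tangle-freeness and the fact that these walks contain a cycle, giving an extra factor of the vertex count), and their random-matrix estimate for $\E\left[\prod(\underline{S})\prod(S)\right]$ (the analogue of Lemma \ref{lem:BC-random-matrix-1}). These are purely combinatorial/probabilistic inputs that do not involve the transfer operator and can be quoted verbatim.

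The genuinely new input — the analogue of Proposition \ref{prop:sum-over-iso-class} — is the bound on the sum over an isomorphism class of $\left|\tr\left[A(s)_{\hat\i(\gamma_1)}A(s)_{\hat\i(\gamma_2)}^*\cdots A(s)_{\hat\i(\gamma_{2m})}^*\right]\right|$. But the proof of Proposition \ref{prop:sum-over-iso-class} never used anything special about $\uB$-walks beyond: (i) Lemma \ref{lem:Trace-of-product-bound}, giving $\leq C^m\prod_k\Upsilon_{\hat\i(\gamma_k)}^r$; (ii) rough multiplicativity, the mirror estimate, and the exponential bound from Lemma \ref{lem:coarse-homomorphism}; and (iii) the pressure estimate Lemma \ref{lem:Pressure-estimate}, together with $P(2r_1)<0$. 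All of these apply equally to the $R_k$-walks, since a $\gamma_j\in F_k^{\ell+1}$ still has a color sequence $\i(\gamma_j)$ of length $\ell$ and traces out a graph built from a skeleton $\tilde G$ with the same type-$\textbf{0}$/$\textbf{1}$/$\textbf{2}$ decomposition of subsequences (the bounds $e'\leq 3\cdot\rank+4m-3$, etc., hold here too, possibly with slightly worse numerical constants because these walks contain one extra cycle). So I would reprove Proposition \ref{prop:sum-over-iso-class} essentially word-for-word, obtaining $\leq C^{m+\rank+e_1}n^v\rho^{2\ell m}$ with the same $\rho<1$, and then combine with the combinatorial and random-matrix inputs and Markov's inequality exactly as in the proof of Proposition \ref{prop:B(ell)-norm-bound}.

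The main obstacle — and the reason the exponent jumps from $(\log n)^{20}$ to $(\log n)^{40}$ and $\rho<1$ is replaced by some $\rho_1>1$ — is the bookkeeping around the single uncut factor $(S_{i_k})$ and the extra cycle forced into $G_\gamma$ by membership in $F_k^{\ell+1}\setminus F^{\ell+1}$. Concretely: the random-matrix bound for $\prod(\underline S)\prod(S)$ loses one power of $n$ relative to the all-centered case (there is one fewer centering), so summing over isomorphism classes the power of $n$ no longer quite cancels, and the surviving $\rho^{2\ell m}$ from the trace estimate must absorb a factor that grows in $\ell$, forcing $\rho_1>1$ but still polylogarithmically controlled after dividing by $n$. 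This is exactly the mechanism in Bordenave-Collins, so the work is to check that their accounting survives (a) the infinite-dimensionality of $\H(\D)$ — harmless, since we only ever use the scalar trace bound of Lemma \ref{lem:Trace-of-product-bound} — and (b) the lack of the symmetry $A_i=A_{\bar i}^*$ — also harmless here, since the relevant traces are already handled by Lemma \ref{lem:Trace-of-product-bound} which makes no self-adjointness assumption, and the mirror estimate \eqref{eq:upsilon-mirror-eq} supplies the needed $\Upsilon_{\bar\u}\asymp\Upsilon_\u$ comparison wherever Bordenave-Collins would have used $a_i=a_{\bar i}^*$. I expect no essential new difficulty beyond carefully transcribing the constants.
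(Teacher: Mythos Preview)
Your overall architecture is right --- expand $\|R_k(s)^{(\ell)}\|^{2m}$ as a trace, sum over $\gamma\in\hat W_{\ell,m,k}$, separate the transfer-operator factor from the permutation factor, and quote Bordenave--Collins for the combinatorial and probabilistic pieces --- and this is exactly what the paper does. But the paper's treatment of the transfer-operator factor is \emph{much} cruder than what you propose, and this is the main point worth noting.

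You plan to rerun the full machinery of Proposition \ref{prop:sum-over-iso-class}: the type-$\mathbf{0}/\mathbf{1}/\mathbf{2}$ decomposition, the mirror estimate, the pressure bound $P(2r_1)<0$, etc., aiming for a $\rho^{2\ell m}$ with $\rho<1$ which then degrades to $\rho_1>1$ after absorbing losses. The paper does none of this. It simply invokes Lemma \ref{lem:Trace-of-product-bound} together with the trivial bound $\Upsilon_{\i}\leq c$ from \eqref{eq:upsilon-bounded} to get
\[
\left|\tr\left[A(s)_{\hat\i(\gamma_1)}A(s)_{\hat\i(\gamma_2)}^*\cdots A(s)_{\hat\i(\gamma_{2m})}^*\right]\right|\leq C^{m}
\]
\emph{uniformly} over all $\gamma$, with no dependence on $\ell$ or on the graph structure of $G_\gamma$ at all. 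The reason this suffices is exactly that the statement only asks for some $\rho_1>1$: the extra $1/n$ in front of $\sum_k R_k$ in Lemma \ref{lem:Bell-to-B(ell)-and-Rkl} will kill any fixed exponential $\rho_1^\ell$ once $\ell\asymp\log n$, so there is no need to fight for $\rho<1$ here. After this trivialization, the paper quotes the $\hat W_{\ell,m,k}$ analogues of the BC lemmas (isomorphism-class count $\leq(2d\ell m)^{12m\cdot\rank+20m}$, crude class size $\leq n^v d^e$, and crude expectation bound $\leq c(9/n)^e$), sums two geometric series, and reads off $\rho_1=10d$.

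Two small corrections to your heuristics. First, it is not that ``one'' factor is uncentered: for each $j$ all of $t=k+1,\ldots,\ell$ carry $S$ rather than $\underline S$, which is why the random-matrix bound collapses to the crude $c(9/n)^e$ with no $e_1$-refinement. Second, the inequality $e'\leq 3\cdot\rank+4m-3$ in the proof of Proposition \ref{prop:sum-over-iso-class} genuinely uses tangle-freeness of each $\gamma_j$; for $\gamma_j\in F_k^{\ell+1}\setminus F^{\ell+1}$ one must instead split each $\gamma_j$ into its two tangle-free halves (this is why the BC exponent doubles from $6m\cdot\rank+10m$ to $12m\cdot\rank+20m$), so your ``slightly worse numerical constants'' would require real work. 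None of this matters once you take the paper's shortcut.
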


First we follow \cite[\S\S\S 4.4.2]{BordenaveCollins} and define\footnote{Here there is a typo in \cite{BordenaveCollins}; we checked with
the authors that they meant to define $\hat{W}_{\ell,m,k}$ (which
Bordenave-Collins call $\hat{W}_{\ell,m}$) using $F_{k}^{\ell+1}\backslash F^{\ell+1}$
(not $F_{k}^{\ell+1}$ as written there). } $\hat{W}_{\ell,m,k}$ to be the set of $\gamma=(\gamma_{1},\ldots,\gamma_{2m})$
satisfying the same conditions as the elements of $W_{\ell,m}$ \textbf{except}
\textbf{here}, each $\gamma_{i}$ is required to be in $F_{k}^{\ell+1}\backslash F^{\ell+1}$,
recalling the definition of $F_{k}^{\ell+1}$ and $F^{\ell+1}$ from
Definition \ref{def:tangles}. 

We then obtain as in \cite[pg 851]{BordenaveCollins} 
\begin{align}
\|R_{k}(s)^{(\ell)}\|^{2m}\leq & \tr\left(\left(R_{k}(s)^{(\ell)}(R_{k}(s)^{(\ell)})^{*}\right)^{m}\right)\nonumber \\
= & \sum_{\substack{\gamma=(\gamma_{1},\ldots,\gamma_{2m})\in\hat{W}_{\ell,m,k}\\
\gamma_{j}=((x_{j,1},i_{j,1}),\ldots,(x_{j,\ell+1},i_{j,\ell+1}))
}
}\tr\left[A(s)_{\hat{\i}(\gamma_{1})}A(s)_{\hat{\i}(\gamma_{2})}^{*}\cdots A(s)_{\hat{\i}(\gamma_{2m-1})}A(s)_{\hat{\i}(\gamma_{2m})}^{*}\right]\nonumber \\
 & \cdot\prod_{j=1}^{2m}\left(\prod_{t=1}^{k-1}(\underline{S}_{i_{j,t}})_{x_{j,t}x_{j,t+1}}\right)\left(\text{\ensuremath{\prod_{t=k+1}^{\ell}}(\ensuremath{S_{i_{j,t}})_{x_{j,t}x_{j,t+1}}}}\right).\label{eq:matrix-product}
\end{align}
Isomorphism classes of elements of $\hat{W}_{\ell,m,k}$ are defined
the same way as for isomorphism classes in $W_{\ell,m}.$ 

For $\gamma\in\hat{W}_{\ell,m,k}$ we define $\hat{G}_{\gamma}$ to
be the following graph. If we write 
\[
\gamma=(\gamma_{1},\ldots,\gamma_{2m}),\quad\gamma_{j}=((x_{j,1},i_{j,1}),\ldots,(x_{j,\ell+1},i_{j,\ell+1}))
\]
then the vertex set of $\hat{G}_{\gamma}$ is defined to be the set
of all $x_{j,t}$ with $j\in[2m],t\in[\ell+1]$ and the colored edges
\[
\{[x_{j,t},i_{j,t},x_{j,t+1}]\,:\,1\leq t\leq k-1,\,k+1\leq t\leq\ell\}
\]
modulo the equivalence relation (\ref{eq:equiv-relation}). Notice
this is a sub-colored-graph of $G_{\gamma}$ with restriction on the
edges corresponding to the matrix product in (\ref{eq:matrix-product}).

For $\gamma\in\text{\ensuremath{\hat{W}_{\ell,m,k}} }$ it is possible
for $\hat{G}_{\gamma}$ to be disconnected but, as in \cite[(53)]{BordenaveCollins},
$\hat{G}_{\gamma}$ cannot have more vertices than edges; this is
actually the purpose of ensuring each $\gamma_{i}$ is tangled. We
have from \cite[Lemma 29]{BordenaveCollins} the following result.
\begin{lem}
\label{lem:R-num-iso-classes}Let $\hat{W}_{\ell,m,k}(v,e)$ denote
the elements of $\hat{W}_{\ell,m,k}$ such that $\hat{G}_{\gamma}$
has $v$ vertices and $e$ edges. The number of isomorphism classes
in $\hat{W}_{\ell,m,k}(v,e)$ is 
\[
\leq\left(2d\ell m\right)^{12m\cdot\rank+20m}
\]
where as before, $\rank\eqdf e-v+1\geq1$ if $\hat{W}_{\ell,m,k}$
is non-empty.
\end{lem}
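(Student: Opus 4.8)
The plan is to run the F\"uredi--Koml\'os ``canonical form and encoding'' scheme, in the form developed by Friedman and by Bordenave-Collins; this statement is precisely \cite[Lemma 29]{BordenaveCollins}, and I would reproduce that argument.

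First I would fix an isomorphism class in $\hat{W}_{\ell,m,k}(v,e)$ and single out a canonical representative $\gamma=(\gamma_{1},\dots,\gamma_{2m})$, by traversing the $2m$ paths in the cyclic ``necklace'' order forced by the gluing conditions that define $\hat{W}_{\ell,m,k}$ and relabelling the vertices $1,2,3,\dots$ and the colours $1,2,3,\dots$ in order of first appearance along this traversal. The resulting $\gamma$ depends only on its isomorphism class, so it suffices to bound the number of such canonical representatives.

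Next I would pass to the reduced (topological) graph obtained from $\hat{G}_\gamma$ by suppressing its degree-$2$ vertices, keeping as distinguished the at most $4m$ first/last vertices and the $2m$ index-$k$ vertices of the $\gamma_{j}$. As in \cite[Lemma 29]{BordenaveCollins} this reduced graph has $O(\rank+m)$ edges, and the isomorphism class of $\gamma$ is completely determined by: the isomorphism type of the reduced graph with its distinguished vertices; the length and compressed colour-sequence of each reduced edge; and, for each $j$, the ordered list of reduced edges that $\gamma_{j}$ traverses together with their directions --- a list which is itself short because a non-backtracking path with tangle-free halves can only navigate the reduced graph in a restricted way. Encoding all of this costs at most $12m\cdot\rank+20m$ symbols from an alphabet of size at most $2d\,\ell m$ (bounding $v\le\ell m$ and absorbing the colour factor $2d$), which yields $(2d\,\ell m)^{12m\cdot\rank+20m}$.

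It remains to justify the count $N\le 12m\cdot\rank+20m$ on the number of such symbols. Here one exploits that $\hat{G}_\gamma$ has $e\ge v$, hence $\rank=e-v+1\ge 1$ --- this is \cite[(53)]{BordenaveCollins} and uses that each $\gamma_{j}$ is tangled --- together with the hypothesis $\gamma_{j}\in F_{k}^{\ell+1}\setminus F^{\ell+1}$, so that the first $k$ and the last $\ell-k+1$ half-edges of $\gamma_{j}$ each form a tangle-free path. This last fact caps the cycle rank that each $\gamma_{j}$ can contribute, so the global ``excess'' $\rank$ cannot be generated by too much branching; carrying out the accounting component-by-component and path-by-path, exactly as in \cite[Lemma 29]{BordenaveCollins} (and in parallel with the tangle-free count \cite[Lemma 25]{BordenaveCollins}, whose exponent $6m\cdot\rank+10m$ roughly doubles here because a tangled $\gamma_{j}$ can carry two significant cycles rather than one), gives the stated constants. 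This constant-chasing --- controlling how much cycle-excess can accumulate across the $2m$ tangled paths given the tangle-free structure of their halves --- is the main obstacle, and is where the real work of \cite[Lemma 29]{BordenaveCollins} lies; the remainder is bookkeeping.
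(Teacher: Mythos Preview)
Your proposal is correct and matches the paper's approach exactly: the paper does not give an independent proof of this lemma but simply cites \cite[Lemma 29]{BordenaveCollins}, which is precisely what you identify and whose argument you sketch. Your outline of the canonical-form encoding, the reduced graph with distinguished start/end and index-$k$ vertices, and the reason the exponent roughly doubles relative to \cite[Lemma 25]{BordenaveCollins} is an accurate summary of that proof.
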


We also have the following results from \cite[pg. 853]{BordenaveCollins},
the first of which is trivial.
\begin{lem}
\label{lem:Relements-in-iso-class}The number of elements of any isomorphism
class in $\hat{W}_{\ell,m,k}(v,e)$ is at most
\[
n^{v}d^{e}.
\]
\end{lem}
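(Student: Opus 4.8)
The plan is to count directly the number of ways to produce a member of a fixed isomorphism class of $\hat{W}_{\ell,m,k}(v,e)$. Recall that an element of $\hat{W}_{\ell,m,k}$ is a $2m$-tuple of non-backtracking paths, and that two such elements lie in the same isomorphism class precisely when one is obtained from the other by relabeling the $v$ vertices of $\hat G_\gamma$ by distinct elements of $[n]$ and by choosing a (consistent, allowable) recoloring of the $e$ colored edges of $\hat G_\gamma$. Thus, starting from a fixed canonical representative of the class, producing an arbitrary element of that class amounts to: first, an injection from the $v$ vertices of $\hat G_\gamma$ into $[n]$; second, a choice of color in $[d]$ for each of the $e$ (undirected) colored edges of $\hat G_\gamma$, subject to whatever compatibility constraints the canonical combinatorial structure imposes.

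First I would bound the number of vertex-labelings: there are $n(n-1)\cdots(n-v+1)\le n^v$ injections of the $v$ vertices into $[n]$. (One uses here only that the vertices carry distinct labels, which is built into the definition via the vertex set being a subset of $[n]$; distinct vertices of $\hat G_\gamma$ must receive distinct labels because each colored edge corresponds to a permutation relation $\sigma_i(x)=y$.) Next I would bound the number of edge-colorings: each colored edge of $\hat G_\gamma$ can be assigned one of the $d$ colors in $[d]$ (the relation $(x,i,y)\sim(y,\bar i,x)$ means a color together with an orientation is the same datum as the opposite color with the opposite orientation, so there are exactly $d$ genuinely distinct colors available per undirected colored edge, or one can simply note $2d$ oriented choices and divide, in any case the crude bound $d^e$ suffices). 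Multiplying, the number of elements in the isomorphism class is at most $n^v d^e$, which is the claimed bound. Any additional compatibility constraints coming from the fact that the paths $\gamma_j$ must be non-backtracking and must glue at their endpoints only cut down the count, so the upper bound is unaffected.

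The only point requiring a little care — and the closest thing here to an obstacle — is making sure the counting is genuinely an \emph{upper} bound, i.e. that every element of the class is obtained by \emph{some} choice of vertex labels and edge colors as above, and that one is not accidentally undercounting by imposing spurious constraints; this is handled by the standard observation (as in \cite[pg.\ 853]{BordenaveCollins}) that the isomorphism class is, by definition, exactly the orbit under such relabelings, so every member arises this way and the product $n^v d^e$ dominates. Since this lemma is described in the excerpt as "trivial" and is quoted from \cite[pg.\ 853]{BordenaveCollins}, I would keep the write-up to essentially the two sentences above: injections of vertices give the factor $n^v$, free choice of edge colors gives the factor $d^e$, and any genuine constraints only help.
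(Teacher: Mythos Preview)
Your proposal is correct and matches the paper's approach: the paper gives no proof at all, simply declaring the lemma ``trivial'' and citing \cite[pg.~853]{BordenaveCollins}, and your counting argument---at most $n(n-1)\cdots(n-v+1)\le n^{v}$ vertex relabelings times at most $d^{e}$ edge recolorings---is exactly the intended one-line justification. Your closing remark that any additional non-backtracking or gluing constraints only reduce the count is the right way to dispose of the residual technicalities.
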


\begin{lem}
\label{lem:R-expected-product-mat-coefs}There is a constant $c>0$
such that if $2\ell m\leq\sqrt{n}$ and $\gamma=(\gamma_{1},\ldots,\gamma_{2m})\in\hat{W}_{\ell,m,k}(v,e),$with
$\gamma_{j}=((x_{j,1},i_{j,1}),\ldots,(x_{j,\ell+1},i_{j,\ell+1}))$
for $j\in[2m]$, then
\[
\left|\E\left[\prod_{j=1}^{2m}\left(\prod_{t=1}^{k-1}(\underline{S}_{i_{j,t}})_{x_{j,t}x_{j,t+1}}\right)\left(\text{\ensuremath{\prod_{t=k+1}^{\ell}}(\ensuremath{S_{i_{j,t}})_{x_{j,t}x_{j,t+1}}}}\right)\right]\right|\leq c\left(\frac{9}{n}\right)^{e}.
\]
\end{lem}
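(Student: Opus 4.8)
The plan is to reduce the expectation to the classical moment formula for matrix coefficients of a uniformly random permutation, following the computation of \cite{BordenaveCollins} (pg.~853). First I would use the independence of $\sigma_{1},\ldots,\sigma_{d}$, together with the relation $\sigma_{\bar i}=\sigma_{i}^{-1}$ (which rewrites a factor $(S_{\bar i})_{yx}$ as $\mathbf{1}\{\sigma_{i}(x)=y\}$ and $(\underline{S}_{\bar i})_{yx}$ as $(\underline{S}_{i})_{xy}$), to factor the expectation over colors $i\in[d]$. In this way every matrix-coefficient factor appearing in (\ref{eq:matrix-product}) becomes attached to a well-defined colored edge $b=[x_{b},i_{b},y_{b}]$ of $\hat{G}_{\gamma}$ (recall the equivalence relation (\ref{eq:equiv-relation})), contributing $\mathbf{1}\{\sigma_{i_{b}}(x_{b})=y_{b}\}$ if it came from an uncentered factor (positions $k+1\le t\le\ell$) and $(\underline{S}_{i_{b}})_{x_{b}y_{b}}=\mathbf{1}\{\sigma_{i_{b}}(x_{b})=y_{b}\}-\tfrac1n$ if it came from a centered factor (positions $1\le t\le k-1$).

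Next I would collapse the multiplicities edge by edge. Writing $n_{b}^{u},n_{b}^{c}$ for the numbers of uncentered and centered traversals of $b$, the contribution of $b$ is $\bigl((S_{i_{b}})_{x_{b}y_{b}}\bigr)^{n_{b}^{u}}\bigl((\underline{S}_{i_{b}})_{x_{b}y_{b}}\bigr)^{n_{b}^{c}}$. Since $(S_{i})_{xy}$ is a $0$--$1$ quantity (hence idempotent), expanding $(\underline{S}_{i})_{xy}=(S_{i})_{xy}-\tfrac1n$ gives: if $n_{b}^{u}\ge1$, the contribution equals $(S_{i_{b}})_{x_{b}y_{b}}(1-\tfrac1n)^{n_{b}^{c}}$; if $n_{b}^{u}=0$, it equals $(S_{i_{b}})_{x_{b}y_{b}}\,\theta_{b}+(-\tfrac1n)^{n_{b}^{c}}$ with $|\theta_{b}|\le2$. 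Hence the full product expands as a sum of at most $2^{e}$ terms, each of the form of a scalar of modulus at most $2^{e}n^{-D}$ (where $D$ counts the edges ``dropped'' via their $(-\tfrac1n)^{n_{b}^{c}}$ alternative, each such edge having $n_{b}^{c}\ge1$) times $\prod_{b\in B'}\mathbf{1}\{\sigma_{i_{b}}(x_{b})=y_{b}\}$ over the surviving set $B'$ of edges, with $|B'|=e-D$.

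For each such term I would apply, color by color, the permutation moment identity: for a uniform $\sigma\in S_{n}$, $\E\bigl[\prod_{l}\mathbf{1}\{\sigma(x_{l})=y_{l}\}\bigr]=(n-a)!/n!$ when the constraints form a partial injection with $a$ distinct pairs, and $0$ otherwise. Because distinct colored edges impose distinct constraints, the numbers $a_{i}$ of distinct surviving constraints of color $i$ satisfy $\sum_{i}a_{i}=|B'|=e-D$, and since $a_{i}\le e<2\ell m\le\sqrt n\le n/2$ we get $(n-a_{i})!/n!\le(2/n)^{a_{i}}$; thus the expectation of the surviving product is at most $(2/n)^{e-D}$. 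Combining, each term is at most $2^{e}n^{-D}(2/n)^{e-D}\le4^{e}n^{-e}$, and summing the at most $2^{e}$ terms yields $\le8^{e}n^{-e}\le(9/n)^{e}$, which is the claim (any further slack is absorbed into $c$). Note that, unlike the count of isomorphism classes, the graph property $v\le e$ coming from the tangledness of the $\gamma_{i}$ is not needed here. The main (mild) obstacle is purely the bookkeeping of the centering expansion: one must check that each dropped edge is genuinely compensated by a factor $1/n$ and that the surviving constraints stay pairwise distinct, so that the permutation moment formula delivers the full $n^{-e}$ saving. This is exactly the content of the cited Bordenave--Collins estimate; the only new feature is the possibility of mixed centered/uncentered multiplicities on a single edge, dispatched by the elementary identity above.
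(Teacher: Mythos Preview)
Your argument is correct, and since the paper does not supply its own proof of this lemma but simply cites \cite[pg.~853]{BordenaveCollins}, your proposal is precisely a careful reconstruction of the Bordenave--Collins estimate being invoked. The edge-by-edge collapse using idempotence of $(S_i)_{xy}$, the expansion into at most $2^{e}$ terms, and the permutation moment bound $(n-a_i)!/n!\le(2/n)^{a_i}$ are exactly the ingredients of that argument; your observation about mixed centered/uncentered multiplicities on a single edge is the only bookkeeping not already explicit there, and you dispatch it cleanly.
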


\begin{proof}[Proof of Proposition \ref{prop:Rkl-norm-bound}]
Suppose $n\ge3$ and define 
\[
m=\left\lfloor \frac{\log n}{25\log\log n}\right\rfloor .
\]
Taking the expected value of (\ref{eq:matrix-product}), and using
Lemma \ref{lem:Trace-of-product-bound} and the fact that $\Upsilon_{\i}\leq C$
for any $\i$ we obtain for $C=C(\Lambda)>0$
\begin{align*}
 & \E\left[\|R_{k}(s)^{(\ell)}\|^{2m}\right]\\
\leq & C^{m}\sum_{\substack{\gamma=(\gamma_{1},\ldots,\gamma_{2m})\in\hat{W}_{\ell,m,k}\\
\gamma_{j}=((x_{j,1},i_{j,1}),\ldots,(x_{j,\ell+1},i_{j,\ell+1}))
}
}\left|\E\left[\prod_{j=1}^{2m}\left(\prod_{t=1}^{k-1}(\underline{S}_{i_{j,t}})_{x_{j,t}x_{j,t+1}}\right)\left(\text{\ensuremath{\prod_{t=k+1}^{\ell}}(\ensuremath{S_{i_{j,t}})_{x_{j,t}x_{j,t+1}}}}\right)\right]\right|.
\end{align*}
Now noting that the product of matrix coefficients above is constant
on isomorphism classes in $\hat{W}_{\ell,m,k}(v,e)$, and using Lemmas
\ref{lem:R-num-iso-classes}, \ref{lem:Relements-in-iso-class}, and
\ref{lem:R-expected-product-mat-coefs}, we obtain
\begin{align*}
\E\left[\|R_{k}(s)^{(\ell)}\|^{2m}\right] & \leq cC^{m}\sum_{v=1}^{2\ell m}\sum_{e=v}^{\infty}\left(2d\ell m\right)^{12m\cdot(e-v+1)+20m}\left(\frac{9}{n}\right)^{e}n^{v}d^{e}\\
 & =(C'd\ell m)^{32m}\sum_{v=1}^{2\ell m}\left(\frac{n}{(2d\ell m)^{12m}}\right)^{v}\sum_{e=v}^{\infty}\left(\frac{9d}{n}\left(2d\ell m\right)^{12m}\right)^{e}.
\end{align*}
The inner geometric series is convergent by our constraint on $\ell,m$
so we obtain
\begin{align*}
\E\left[\|R_{k}(s)^{(\ell)}\|^{2m}\right] & \leq(C'd\ell m)^{32m}\sum_{v=1}^{2\ell m}\left(\frac{n}{(2d\ell m)^{12m}}\frac{9d}{n}\left(2d\ell m\right)^{12m}\right)^{v}\\
 & =(C'd\ell m)^{32m}\sum_{v=1}^{2\ell m}(9d)^{v}\\
 & \leq(C''d\ell m)^{32m}(9d)^{2\ell m}.
\end{align*}
Using Markov's inequality yields 
\begin{align*}
\mathrm{Prob}[\|R_{k}(s)^{(\ell)}\|\geq(\log n)^{40}(10d)^{\ell}] & \leq\frac{(C''d\ell m)^{32m}}{(\log n)^{80m}}\left(\frac{9}{10}\right)^{2\ell m}\\
 & \leq\frac{(c(\log n))^{64m}}{(\log n)^{80m}}\left(\frac{9}{10}\right)^{2\ell m}\\
 & \leq c'\left(\frac{9}{10}\right)^{2\ell m}.
\end{align*}
This directly implies the result.
\end{proof}

\section{Proof of Theorem \ref{thm:main-theorem}}

It is sufficient to prove Theorem \ref{thm:main-theorem} when $\K$
is a rectangle in $\{\,s\,:\,\Re(s)\,>\,\frac{\delta}{2}\,\}$ (by
covering $\K$ with finitely many rectangles). So suppose $\K$ is
such a rectangle. Let $\rho$ and $\rho_{1}$ be the constants provided
by Propositions \ref{prop:B(ell)-norm-bound} and \ref{prop:Rkl-norm-bound}
for this $\K$. Let $C_{0}$ be the constant provided by Lemma \ref{lem:norm-of-variation}
for this $\K$.

We let $ $
\begin{align}
\beta & =\min\left(\frac{1}{8\log(2d-1)},\log\left(\frac{\rho_{1}}{\rho}\right)^{-1}\right),\label{eq:beta-choice}\\
\alpha & =2\beta\log C_{0}.\label{eq:alpha-choice}
\end{align}

A $\delta$-net of $\K$ is a finite subset of $\K$ such that any
point of $\K$ is within Euclidean distance $\delta$ of some point
of the $\delta$-net. Now it is easy to see that for each $n$ we
can choose an $n^{-\alpha}$-net $\mathcal{N}_{n}$ of $\K$ with
\[
|\mathcal{N}_{n}|\leq Cn^{2\alpha}
\]
 for some $C=C(\K,\alpha)>0$. We want all the following events to
hold simultaneously for 
\[
\ell=\lfloor\beta\log n\rfloor.
\]

\begin{description}
\item [{A}] $G_{\sigma}$ is $\ell$-tangle-free.
\item [{B}] $\|\uB(s_{i})^{(\ell)}\|\leq(\log n)^{20}\rho^{\ell}$ for
$\rho<1$ as in Proposition \ref{prop:B(ell)-norm-bound} \emph{for
all} $s_{i}\in\mathcal{N}_{n}$.
\item [{C}] $\|R_{k}(s_{i})^{(\ell)}\|\leq(\log n)^{40}\rho_{1}^{\ell}$
for $\rho_{1}>0$ as in Proposition \ref{sec:Norm-of-Rk(ell)} \emph{for
all $s_{i}\in\mathcal{N}_{n}$ and all $1\leq k\leq\ell$.}
\end{description}
By Lemma \ref{lem:tangle-probability} and Propositions \ref{prop:B(ell)-norm-bound}
and \ref{sec:Norm-of-Rk(ell)}, these events all hold with probability
at least

\[
1-c\frac{\ell^{3}(2d-1)^{4\ell}}{n}-cCn^{2\alpha}\exp\left(-\frac{\ell\log n}{c\log\log n}\right)
\]
for some $c>0$. The last term above tends to zero for any fixed $\alpha,\beta$
and the second term tends to zero by our choice of $\beta$ in (\ref{eq:beta-choice}).
So with probability tending to one as $n\to\infty$, all three families
\textbf{A,~B, C }of events above hold.

Now, if \textbf{A, B, C} hold then by Lemma \ref{lem:Bell-to-B(ell)-and-Rkl}
\begin{align*}
\|B(s_{i})^{\ell}\lvert_{K_{0}}\|\leq & \|\uB(s_{i})^{(\ell)}\|+\frac{1}{n}\sum_{k=1}^{\ell}\|R_{k}(s_{i})^{(\ell)}\|\\
\leq & (\log n)^{20}\rho^{\ell}+\frac{\ell}{n}(\log n)^{40}\rho_{1}^{\ell}\\
\leq & (1+\beta)(\log n)^{41}\rho^{\ell}
\end{align*}
for all $s_{i}\in\mathcal{N}_{n}$, where the last inequality used
(\ref{eq:beta-choice}). Therefore by Proposition \ref{prop:conjugation}
we have 
\begin{equation}
\|\L_{s_{i},\rho_{n}^{0}}^{\ell}\|\leq(1+\beta)(\log n)^{41}\rho^{\ell}\label{eq:contraction-on-net}
\end{equation}
for all $s_{i}\in\mathcal{N}_{n}$.

Now by Lemma \ref{lem:norm-of-variation} there is a constant $C_{0}=C_{0}(K)>0$
such that for all $s\in\K$, if $s_{i}$ is a point in $\mathcal{N}_{n}$
for distance $\leq n^{-\alpha}$ from $s$ then

\begin{equation}
\|\L_{s,\rho_{n}^{0}}^{\ell}-\L_{s_{0},\rho_{n}^{0}}^{\ell}\|\leq|s-s_{0}|C_{0}^{\ell}\leq n^{-\alpha}C_{0}^{\ell}\leq n^{\left(\beta\log C_{0}-\alpha\right)}<n^{-\frac{\alpha}{2}}\label{eq:variation-from-net}
\end{equation}
by our choice of $\alpha$ in (\ref{eq:alpha-choice}).

Combining (\ref{eq:contraction-on-net}) and (\ref{eq:variation-from-net})
we obtain that with probability tending to one as $n\to\infty$, for
all $s\in\K$
\[
\|\L_{s,\rho_{n}^{0}}^{\ell}\|<1.
\]
This immediately implies that $\L_{s,\rho_{n}^{0}}$ does not have
$1$ as an eigenvalue, and hence by Proposition \ref{prop:new-resonances-and-evalue-1}
there are no new resonances of the random cover $X_{\phi}$ in $\K$.
$\square$

\bibliographystyle{alpha}
\bibliography{database}

\begin{thebibliography}{GLMST21}

\bibitem[Alo86]{Alon}
N.~Alon.
\newblock Eigenvalues and expanders.
\newblock {\em Combinatorica}, 6(2):83--96, 1986.
\newblock Theory of computing (Singer Island, Fla., 1984).

\bibitem[BC19]{BordenaveCollins}
C.~Bordenave and B.~Collins.
\newblock Eigenvalues of random lifts and polynomials of random permutation
  matrices.
\newblock {\em Ann. of Math. (2)}, 190(3):811--875, 2019.

\bibitem[BD17]{BDFourier}
J.~Bourgain and S.~Dyatlov.
\newblock Fourier dimension and spectral gaps for hyperbolic surfaces.
\newblock {\em Geom. Funct. Anal.}, 27(4):744--771, 2017.

\bibitem[BD18]{BDgap}
J.~Bourgain and S.~Dyatlov.
\newblock Spectral gaps without the pressure condition.
\newblock {\em Ann. of Math. (2)}, 187(3):825--867, 2018.

\bibitem[BGS11]{BGS2}
J.~Bourgain, A.~Gamburd, and P.~Sarnak.
\newblock Generalization of {S}elberg's {$\frac{3}{16}$} theorem and affine
  sieve.
\newblock {\em Acta Math.}, 207(2):255--290, 2011.

\bibitem[BM04]{BrooksMakover}
R.~Brooks and E.~Makover.
\newblock Random construction of {R}iemann surfaces.
\newblock {\em J. Differential Geom.}, 68(1):121--157, 2004.

\bibitem[BMM17]{BMM}
W.~Ballmann, H.~Matthiesen, and S.~Mondal.
\newblock Small eigenvalues of surfaces of finite type.
\newblock {\em Compos. Math.}, 153(8):1747--1768, 2017.

\bibitem[Bor]{bordenave2015new}
C.~Bordenave.
\newblock A new proof of {F}riedman's second eigenvalue theorem and its
  extension to random lifts.
\newblock {\em Annales scientifiques de l'Ecole normale sup{\'e}rieure}.
\newblock to appear, available at arxiv:1502.04482.

\bibitem[Bor16]{Borthwick}
D.~Borthwick.
\newblock {\em Spectral theory of infinite-area hyperbolic surfaces}, volume
  318 of {\em Progress in Mathematics}.
\newblock Birkh\"{a}user/Springer, [Cham], second edition, 2016.

\bibitem[Bow79]{Bowen}
R.~Bowen.
\newblock Hausdorff dimension of quasicircles.
\newblock {\em Inst. Hautes \'{E}tudes Sci. Publ. Math.}, (50):11--25, 1979.

\bibitem[But98]{Button}
J.~Button.
\newblock All {F}uchsian {S}chottky groups are classical {S}chottky groups.
\newblock In {\em The {E}pstein birthday schrift}, volume~1 of {\em Geom.
  Topol. Monogr.}, pages 117--125. Geom. Topol. Publ., Coventry, 1998.

\bibitem[FK81]{FK}
Z.~F\"{u}redi and J.~Koml\'{o}s.
\newblock The eigenvalues of random symmetric matrices.
\newblock {\em Combinatorica}, 1(3):233--241, 1981.

\bibitem[Fri03]{FriedmanRelative}
J.~Friedman.
\newblock Relative expanders or weakly relatively {R}amanujan graphs.
\newblock {\em Duke Math. J.}, 118(1):19--35, 2003.

\bibitem[Fri08]{Friedman}
J.~Friedman.
\newblock A proof of {A}lon's second eigenvalue conjecture and related
  problems.
\newblock {\em Mem. Amer. Math. Soc.}, 195(910):viii+100, 2008.

\bibitem[Gam02]{Gamburd1}
A.~Gamburd.
\newblock On the spectral gap for infinite index ``congruence'' subgroups of
  {${\rm SL}_2(\bold Z)$}.
\newblock {\em Israel J. Math.}, 127:157--200, 2002.

\bibitem[GLMST21]{GLST}
C.~Gilmore, E.~Le~Masson, T.~Sahlsten, and J.~Thomas.
\newblock Short geodesic loops and {$L^p$} norms of eigenfunctions on large
  genus random surfaces.
\newblock {\em Geom. Funct. Anal.}, 31(1):62--110, 2021.

\bibitem[JN12]{JN1}
D.~Jakobson and F.~Naud.
\newblock On the critical line of convex co-compact hyperbolic surfaces.
\newblock {\em Geom. Funct. Anal.}, 22(2):352--368, 2012.

\bibitem[JNS19]{JNS}
D.~Jakobson, F.~Naud, and L.~Soares.
\newblock Large covers and sharp resonances of hyperbolic surfaces.
\newblock {\em To appear, Ann. Institut Fourier.}, 2019.

\bibitem[LP81]{LP}
P.~D. Lax and R.~S. Phillips.
\newblock The asymptotic distribution of lattice points in {E}uclidean and
  non-{E}uclidean spaces.
\newblock In {\em Functional analysis and approximation ({O}berwolfach, 1980)},
  volume~60 of {\em Internat. Ser. Numer. Math.}, pages 373--383.
  Birkh\"{a}user, Basel-Boston, Mass., 1981.

\bibitem[LW21]{lipnowski2021optimal}
M.~Lipnowski and A.~Wright.
\newblock Towards optimal spectral gaps in large genus, 2021.
\newblock Preprint, arXiv:2103.07496.

\bibitem[Mir13]{MirzakhaniRandom}
M.~Mirzakhani.
\newblock Growth of {W}eil-{P}etersson volumes and random hyperbolic surfaces
  of large genus.
\newblock {\em J. Differential Geom.}, 94(2):267--300, 2013.

\bibitem[MM87]{MazzeoMelrose}
R.~R. Mazzeo and R.~B. Melrose.
\newblock Meromorphic extension of the resolvent on complete spaces with
  asymptotically constant negative curvature.
\newblock {\em J. Funct. Anal.}, 75(2):260--310, 1987.

\bibitem[MN20]{MN1}
M.~Magee and F.~Naud.
\newblock Explicit spectral gaps for random covers of {R}iemann surfaces.
\newblock {\em Publ. Math. Inst. Hautes \'{E}tudes Sci.}, 132:137--179, 2020.

\bibitem[MNP20]{MageeNaudPuder}
M.~Magee, F.~Naud, and D.~Puder.
\newblock A random cover of a compact hyperbolic surface has relative spectral
  gap $\frac{3}{16}-\varepsilon$, 2020.
\newblock Preprint, arXiv:2003.10911.

\bibitem[{Mon}20]{Monk}
L.~{Monk}.
\newblock {Benjamini-Schramm convergence and spectrum of random hyperbolic
  surfaces of high genus}.
\newblock {\em Analysis \& PDE, to appear, available at
  arXiv:arXiv:2002.00869}, page arXiv:2002.00869, February 2020.

\bibitem[MOW17]{MOW}
M.~Magee, H.~Oh, and D.~Winter.
\newblock Uniform congruence counting for {S}chottky semigroups in
  $\mathrm{SL}_2(\mathbf{Z})$), with appendix by {J}. {B}ourgain, {A}.
  {K}ontorovich, and {M}. {M}agee.
\newblock {\em Journal f\"{u}r die reine und angewandte Mathematik (Crelles
  Journal)}, 01 2017.

\bibitem[MP20]{MPasympcover}
M.~Magee and D.~Puder.
\newblock The asymptotic statistics of random covering surfaces, 2020.
\newblock Preprint, arXiv:2003.05892v1.

\bibitem[Nau05]{NaudExpanding}
F.~Naud.
\newblock Expanding maps on {C}antor sets and analytic continuation of zeta
  functions.
\newblock {\em Ann. Sci. \'{E}cole Norm. Sup. (4)}, 38(1):116--153, 2005.

\bibitem[Nil91]{Nilli}
A.~Nilli.
\newblock On the second eigenvalue of a graph.
\newblock {\em Discrete Math.}, 91(2):207--210, 1991.

\bibitem[OW16]{OW}
H.~Oh and D.~Winter.
\newblock Uniform exponential mixing and resonance free regions for convex
  cocompact congruence subgroups of {$\operatorname{SL}_2(\Bbb{Z})$}.
\newblock {\em J. Amer. Math. Soc.}, 29(4):1069--1115, 2016.

\bibitem[Pat76]{Patterson}
S.~J. Patterson.
\newblock The limit set of a {F}uchsian group.
\newblock {\em Acta Math.}, 136(3-4):241--273, 1976.

\bibitem[Pud15]{PUDER}
D.~Puder.
\newblock Expansion of random graphs: new proofs, new results.
\newblock {\em Invent. Math.}, 201(3):845--908, 2015.

\bibitem[Sar14]{Sarnak}
P.~Sarnak.
\newblock Notes on thin matrix groups.
\newblock In {\em Thin groups and superstrong approximation}, volume~61 of {\em
  Math. Sci. Res. Inst. Publ.}, pages 343--362. Cambridge Univ. Press,
  Cambridge, 2014.

\bibitem[WX21]{wu2021random}
Y.~Wu and Y.~Xue.
\newblock Random hyperbolic surfaces of large genus have first eigenvalues
  greater than $\frac{3}{16}-\epsilon$, 2021.
\newblock Preprint, arXiv:2102.05581.

\bibitem[Zwo17]{Zworski_survey}
M.~Zworski.
\newblock Mathematical study of scattering resonances.
\newblock {\em Bull. Math. Sci.}, 7(1):1--85, 2017.

\end{thebibliography}

\newpage{}

\noindent Michael Magee, \\
Department of Mathematical Sciences,\\
Durham University, \\
Lower Mountjoy, DH1 3LE Durham,\\
United Kingdom

\noindent \texttt{michael.r.magee@durham.ac.uk}\\

\noindent Frédéric Naud, \\
Institut Mathématique de Jussieu,\\
Sorbonne Université,\\
4 place Jussieu, 75252 Paris Cedex 05\\
France\\
\texttt{frederic.naud@imj-prg.fr}
\end{document}